\definecolor{electricblue}{rgb}{0.49, 0.98, 1.0}
\definecolor{electricpurple}{rgb}{0.75, 0.0, 1.0}
\definecolor{goldenyellow}{rgb}{1.0, 0.87, 0.0}
\definecolor{brightgreen}{rgb}{0.4, 1.0, 0.0}
\definecolor{pastelmagenta}{rgb}{0.96, 0.6, 0.76}
\definecolor{jade}{rgb}{0.0, 0.66, 0.42}
\definecolor{oucrimsonred}{rgb}{0.6, 0.0, 0.0}
\definecolor{darkspringgreen}{rgb}{0.09, 0.45, 0.27}
\definecolor{blue-green}{rgb}{0.0, 0.87, 0.87}
\definecolor{peach}{rgb}{1.0, 0.9, 0.71}
\definecolor{teagreen}{rgb}{0.82, 0.94, 0.75}
\definecolor{turquoise}{rgb}{0.19, 0.84, 0.78}
\definecolor{richelectricblue}{rgb}{0.03, 0.57, 0.82}
\definecolor{mediumlavendermagenta}{rgb}{0.8, 0.6, 0.8}
\definecolor{deepskyblue}{rgb}{0.0, 0.75, 1.0}
\definecolor{lightapricot}{rgb}{0.99, 0.84, 0.69}
\definecolor{lawngreen}{rgb}{0.49, 0.99, 0.0}
\definecolor{electriccyan}{rgb}{0.0, 1.0, 1.0}
\providecommand{\@secnumpunct}{\quad}
\DeclareMathOperator{\Aut}{Aut}
\DeclareMathOperator{\Crit}{Crit}
\DeclareMathOperator{\CV}{CV}
\DeclareMathOperator{\Comp}{Comp}
\DeclareMathOperator{\Press}{P}
\DeclareMathOperator{\id}{id}
\DeclareMathOperator{\Rat}{Rat}
\DeclareMathOperator{\lox}{Lox}
\newcommand{\calCP}{\mathcal{CP}}
\newcommand{\calS}{\mathcal{S}}
\newcommand{\sfH}{\mathsf{H}}
\newcommand{\sfHP}{\mathsf{HP}}
\newcommand{\bsK}{\boldsymbol{K}}
\newcommand{\bsk}{\boldsymbol{k}}
\newcommand{\bi}{\boldsymbol{i}}
\newcommand{\bt}{\boldsymbol{t}}
\newcommand{\C}{\mathbb{C}}
\newcommand{\N}{\mathbb{N}}
\newcommand{\R}{\mathbb{R}}
\newcommand{\Z}{\mathbb{Z}}
\newcommand{\rsp}{\widehat{\C}}
\theoremstyle{definition}
\newtheorem{thm}{Theorem}[section]
\newtheorem{dfn}[thm]{Definition}
\newtheorem{prp}[thm]{Proposition}
\newtheorem{lem}[thm]{Lemma}
\newtheorem{cor}[thm]{Corollary}
\newtheorem{rmk}[thm]{Remark}
\newtheorem{notation}[thm]{Notation}
\newtheorem{mthm}{Main Theorem}[section]
\newtheorem*{ex*}{Example}
\newtheorem*{dfn*}{dfn}
\newtheorem*{prp*}{Proposition}
\newtheorem*{thm*}{Theorem}
\newtheorem*{claim*}{Claim}
\newtheorem*{claim1*}{Claim 1}
\newtheorem*{claim2*}{Claim 2}
\newtheorem*{SA*}{Standing assumption}
\title{On the conformal preimage decay exponent of the Julia sets of rational graph-directed Markov systems}
\author{Tadashi Arimitsu}
\address{(Tadashi Arimitsu) Graduate School of Mathematics, Nagoya University,
Furocho, Chikusaku, Nagoya, 464-8602, JAPAN} 
\email{m21002u@math.nagoya-u.ac.jp}
\subjclass[2020]{37F10, 37D35, 37H12, 37B40}
\begin{document}
\maketitle
\pagestyle{plain}

\begin{abstract}
We define and investigate the conformal preimage decay exponent of the Julia sets of rational graph-directed Markov systems. 
We show that this exponent coincides with the difference between the topological entropy and upper sequential capacity topological pressure for the rational skew product map associated with the system $S$. 
Here, upper sequential capacity topological pressure is a slight generalisation of upper capacity topological pressure given in \cite{MR969568}.
\end{abstract}

\tableofcontents

\section{Introduction}
Let us first review rational semigroups for an introduction to rational graph-directed Markov systems. 
Let $\Rat$ denote the set of all non-constant rational maps on $\rsp$.
The set $\Rat$ is a semigroup with its binary operation being the composition of elements of $\Rat$. 
A subsemigroup of $\Rat$ is called a \textit{rational semigroup}. 
We say that a rational semigroup $G$ is \textit{finitely generated} if it is generated by a finite subset called a \textit{generator system} $\{g_{1}, \ldots, g_{n}\} \subset \Rat$, $n \in \N$.
We denote the finitely generated rational semigroup $G$ by $\langle g_{1}, \ldots, g_{n} \rangle$.
The study of dynamics generated by finitely many rational maps goes back at least to \cite{MR1232462}, although the term, rational semigroup is not used there.
A systematic treatment in the semigroup setting was subsequently given in \cite{MR1397693}.
In \cite{MR4002398}, Sumi and Watanabe introduced a rational graph-directed Markov system, which can be seen as a generalisation of rational semigroups. 
In this new class of systems, rational maps are selected according to a Markovian rule.

The next example, given in \cite[Example 1]{MR1397693}, illustrates the difficulty of studying rational semigroups as dynamical systems:
let $a \in \C$, $|a|>1$ and $G := \langle z^2, |a|^{-1}z^2 \rangle$; then, the Julia set $J(G)$ of the rational semigroup is $\{z \in \rsp \colon 1 \leq |z| \leq |a|\}$; see \cite[Example 2]{MR1232462} for a similar example. 
One readily verifies that the image of a point in $J(G)$ under some element of $G$ is mapped outside the Julia set.

In contrast, the Julia sets of rational semigroups and, more generally, of rational GDMSs satisfy a certain form of backward invariance; see Proposition \ref{55mENtyQLm}. 
Informally, preimages of subsets of the Julia sets remain contained in the Julia sets. 
The present paper primarily focuses on a quantitative study of such preimages; see subsection \ref{8Tn5I8uaaf}. 
To the best of our knowledge, this viewpoint has not been systematically explored for rational semigroups or rational GDMSs.

\subsubsection*{Notation}
We will adopt the convention that $\N:=\{n \in \Z\colon n>0\}$ and $\N_{0}:=\{n \in \Z\colon n \geq 0\}$.
For a point $z$ of the complex plane $\C$, let $|z|$ denote the \textit{modulus} of $z$.
The \textit{cardinality} of a set $\mathcal{A}$ is denoted by $\# \mathcal{A} \in \N_{0} \cup\{\infty\}$.
We write $\id$ to denote the \textit{identity map} on $\rsp$.
For a metric space $Y$ with the distance $d_{Y}$, we write $\overline{A}^{ Y}$ to denote the \textit{closure} of a subset $A \subset Y$ with respect to $d_{Y}$. 
We always equip $\rsp$ with the spherical distance $d_{\rsp}$, and we equip $\C$ with the Euclidean distance
$d_{\C}(z,w)$.
For product spaces $Y_{1}\times Y_{2}$, we use the distance
\[
d_{Y_{1}\times Y_{2}}((x_{1},x_{2}),(y_{1},y_{2})):=\max\{d_{Y_{1}}(x_{1},y_{1}),d_{Y_{2}}(x_{2},y_{2})\}.
\]

\subsubsection*{Rational graph-directed Markov systems}
We first define directed multigraphs, which determine admissible compositions of rational maps. 
Let $V$ be a finite set, each of whose elements is called a \textit{vertex}.
Let $E$ be a finite set, each of whose elements is called an \textit{edge}.
We define the functions $\bi, \bt\colon E \to V$, and the images $\bi(e)$ and $\bt(e)$ are called the \textit{initial} and \textit{terminal} vertices, respectively.
The tuple $(V, E, \bi, \bt)$ is called a \textit{directed multigraph}.
For simplicity, we will write it simply as $(V, E) := (V, E, \bi, \bt)$, and call it a \textit{graph}.

Let $\Gamma_{e} \subset \Rat$ be a non-empty subset of $\Rat$ indexed by a directed edge $e \in E$.
The triple \[
S :=(V, E,(\Gamma_{e})_{e \in E})
\]
is called a \textit{rational graph-directed Markov system} (rational GDMS). 
We write 
\[
\Gamma(S) := \bigcup_{e \in E}\Gamma_{e}.
\] 
We say that a rational GDMS is \textit{finitely generated} if the set $\Gamma(S)$ is finite.
We also write
\[
I:=\{(g,e)\in \Rat\times E : g\in \Gamma_{e}\}.
\]
For $\alpha=(g,e)\in I$ we write
\[
g_{\alpha}:=g,\qquad e_{\alpha}:=e,\qquad \bi(\alpha):=\bi(e),\qquad \bt(\alpha):=\bt(e).
\]
We define 
\[
X(S) := \left\{(\xi_n)_{n \in \N} \in I^{\N} \colon \bt(\xi_n)=\bi(\xi_{n+1}) \text{ for each } n \in \N \right\},
\]
and in particular, for $i \in V$ we define
\[
X_{i}(S) := \left\{\xi \in X(S) \colon \bi(\xi_{1}) = i\right\}.
\]
We equip the set $\Rat$ with the topology of uniform convergence and each $\Gamma_{e} \subset \Rat$ with the subspace topology of $\Rat$.
We consider the discrete topology on $E$ and endow $X(S)$ and $X_{i}(S)$, $i \in V$ with the subspace topology induced by the product topology on $(\Rat \times E)^{\N}$. 
When a rational GDMS $S$ is finitely generated, this topology on $X(S)$ is metrisable by the distance given below. 
We define the distance $d_{X(S)}(\xi, \tau)$ on $X(S)$ to be
\[
d_{X(S)}(\xi,\tau):=2^{-\min\{n \in \N:\ \xi_n\neq\tau_n\}}
\]
for any distinct $\xi, \tau \in X(S)$;
if $\xi = \tau$, then we set $d_{X(S)}(\xi, \tau) :=0$. 

For $\xi := (\xi_{1}, \xi_{2}, \ldots) \in X(S)$ and $n \in \N$, we write $\xi|n := \xi|_{n} := (\xi_{1},\ldots, \xi_{n})$.
For $n \in \N$, we set $X^{n}(S):= \{\xi|n \colon \xi \in X(S) \}$ and $X^{*}(S):=\bigcup_{n \in \N} X^{n}(S)$.
For $i \in V$ and $n \in \N$, we write $X_{i}^{n}(S) := \{\xi \in X^{n}(S): \bi(\xi_{1})=i\}$. 
Given an element $\xi \in X^{*}(S) \cup X(S)$, we write $|\xi|$ to denote the unique $n\in \N$ such that $\xi \in X^{n}(S)$ if $\xi \in X^{*}(S)$; we set $|\xi| = \infty$ if $\xi \in X(S)$.
For $i, j \in V$, we also write $X_{i,j}(S):= \{\xi \in X^{*}(S): \bi(e_{\xi,1})=i,\ \bt(e_{\xi,|\xi|})=j\}$.
Further, for an element $\xi \in X^{*}(S) \cup X(S)$ and for $k \in \N$ with $1\leq k\leq |\xi|$, we use $\xi_{k}$ or $(g_{\xi,k},e_{\xi,k})$ to denote the $k$-th component of $\xi$.
For $\xi \in X^{*}(S)$, we set 
\[
g_{\xi} := g_{\xi, |\xi|} \circ \dots \circ g_{\xi, 1} \quad \text{ and } \quad 
e_{\xi} := (e_{\xi, 1}, \dots, e_{\xi, |\xi|})
\]
and
\[
\bi(\xi) := \bi(e_{\xi,1}) \quad \text{ and }  \quad 
\bt(\xi) := \bt(e_{\xi,|\xi|}).
\]
For $\xi \in X(S)$, we also write $\bi(\xi) := \bi(e_{\xi,1})$. 
Next, we define the sets
\begin{linenomath}
\begin{align*}
&H(S) := \left\{g_{\xi} \in \Rat \colon \xi \in X^{*}(S)\right\}, \\
&H_{i}(S) := \left\{g_{\xi} \in \Rat \colon \xi \in X^{*}(S), \bi(\xi)=i\right\},\\
&H^{j}(S) := \left\{g_{\xi} \in \Rat \colon \xi \in X^{*}(S),\bt(\xi)=j\right\} \text{ and } H^{j}_{i}(S) := H_{i}(S) \cap H^{j}(S).
\end{align*}
\end{linenomath}
With these sets, we introduce several definitions that are central to our study in this paper.
We define the \textit{Fatou set} of $S$ to be 
\[
F(S) := \left\{z \in \rsp \colon \text{the set } H(S) \text { is equicontinuous at } z\right\}
\]
and the \textit{Julia set} of $S$ is defined as its complement $J(S) :=\rsp \setminus F(S)$.
Likewise, we define the following sets at each vertex $i$.
The set 
\[
F_i(S) := \left\{z \in \rsp \colon \text{the set } H_i(S) \text { is equicontinuous at } z\right\}
\] 
is called the \textit{Fatou set of $S$ at the vertex $i$}.
Its complement $J_{i}(S):=\rsp \setminus F_i(S)$ is called the \textit{Julia set of $S$ at the vertex $i$}.
We remark that $F(S) =\bigcap_{i \in V}F_{i}(S)$ and $J(S) = \bigcup_{i \in V}J_{i}(S)$ by definition.
As seen in the example above from \cite[Example 1]{MR1397693}, neither of the Julia sets $J(S)$ nor $J(G)$ possesses a property analogous to complete invariance. 
Throughout this paper, we assume that
\begin{equation}\label{sKJ4fORxv7}
J(S) \subset \C.
\end{equation}

Unlike rational semigroups, rational GDMSs consider only the compositions of rational maps prescribed by a graph. 
As a consequence, the classes of graphs influence rational GDMSs.
In view of this, we provide some definitions.
Let $\mathcal{G} := (V, E)$ be a directed multigraph. 
We define the \textit{adjacency matrix} of $\mathcal{G}$ to be the $V \times V$ matrix $\mathcal{A} := (A_{i,j})_{i, j\in V}$, where each entry is given by $A_{i, j} := \#\{e \in E \colon \bi(e)=i,\ \bt(e)=j\}$, $i, j\in V$.
Such a matrix $\mathcal{A}$ is said to be \textit{irreducible} if for any pair
$(i, j) \in V\times V$ there exists $n \in \N$ such that $(\mathcal{A}^{n})_{i, j} > 0$.
We say that a rational GDMS $S$ is \textit{irreducible} if the adjacency matrix of its graph is irreducible.

\subsubsection*{Rational skew product maps associated with rational GDMSs}
We next review \textit{fibred rational maps} used for the investigation of rational GDMSs. 
Fibred rational maps were first formulated in \cite{MR1704543, MR1785403}, where the author investigated their ergodic properties.
When fibred rational maps are associated with rational semigroups, studying the former as topological and complex dynamical systems has proven useful for understanding the fractal geometry of the latter; e.g., \cite{MR2153926, MR3003942}.
These associated fibred maps are called \textit{rational skew product maps} associated with a generator system; see \cite{MR1767945}.

Let us now formulate the rational skew product maps associated with a rational GDMS $S :=(V, E,(\Gamma_{e})_{e \in E})$.
The \textit{shift map} $\sigma \colon X(S) \circlearrowleft$ is given by $\xi=(\xi_{n})_{n \in \N} \mapsto (\xi_{n + 1})_{n \in \N}$.
We define a map $\tilde{f} \colon X(S) \times \rsp \circlearrowleft$ given by
\begin{equation}\label{ioGvTmYvy3}
\tilde{f}(\xi, z) :=(\sigma(\xi), {g_{\xi, 1}}(z))
\end{equation}
for each $(\xi, z) \in X(S) \times \rsp$. 
The map $\tilde{f}$ is called the \textit{rational skew product map associated with $S$}.
Note that the map $\tilde{f}$ is open, and it is surjective when the system is irreducible.
In this paper, we also refer to it as an \textit{associated rational skew product map}.
For $\xi \in X(S)$, we define the sets
\[
F_{\xi} := \left\{z \in \rsp\colon \text{the set } \{g_{\xi|n} \colon n \in \N\}\text { is equicontinuous at } z\right\}
\]
and $J_{\xi} := \rsp \setminus F_{\xi}$.
The sets $F_{\xi}$ and $J_{\xi}$ are called the \textit{non-autonomous Fatou set of $\xi$} and \textit{non-autonomous Julia set of $\xi$}, respectively. 
Due to the assumption $J(S) \subset \C$ given in \eqref{sKJ4fORxv7}, we have $J_{\xi} \subset J(S) \subset \C$.
Further, we define the sets 
\begin{equation}\label{8HWHM8kJ4p}
J(\tilde{f}) := \overline{\bigcup_{\xi \in X(S)} \{\xi\} \times J_{\xi}}^{X(S) \times \C}
\quad \text{and} \quad F(\tilde{f}):= (X(S) \times \rsp) \setminus J(\tilde{f}),
\end{equation}
which are called the \textit{skew product Julia set} of $\tilde{f}$ and the \textit{skew product Fatou set}, respectively.
The set $J(\tilde{f})$ is completely invariant under $\tilde{f}$; that is,
\begin{equation}\label{l0mSj4Qp58}
\tilde{f}^{-1}(J(\tilde{f}))=\tilde{f}(J(\tilde{f}))=J(\tilde{f}).
\end{equation}
For $i \in V$, we also define the \textit{skew product Julia set} at $i \in V$ as
\begin{equation}\label{h6uKK2bqKG}
J_{i}(\tilde{f}) := \overline{\bigcup_{\xi \in X_{i}(S)} \{\xi\} \times J_{\xi}}^{X(S) \times \C}.
\end{equation}
For $\tilde{z} := (\xi, z) \in X(S) \times \rsp$ and $n \in \N$, we set 
\[
(\tilde{f}^{n})^{\prime}(\tilde{z}) := (g_{\xi|n})^{\prime}(z).
\]
We endow $J(\tilde{f})$ and $J_{i}(\tilde{f})$ with the subspace topology induced by the product distance $d_{X(S)\times \C}$ on $X(S)\times \C$. 
Unless otherwise stated, all derivatives and their norms are taken with respect to the Euclidean coordinate on $\C$.

Let $\pi_{1} \colon X(S)\times \rsp\to X(S)$ and $\pi_{2} \colon X(S)\times\rsp\to\rsp$ denote the coordinate projections.
Under the canonical identification $\pi_{2}|_{\pi_{1}^{-1}(\{\xi\})} \colon \pi_{1}^{-1}(\{\xi\})\to\rsp$,
each fibre $\pi_{1}^{-1}(\{\xi\})$ is naturally identified with $\rsp$.
However, by \eqref{sKJ4fORxv7} we have $J_{\xi}\subset J(S)\subset\C$ for all $\xi \in X(S)$ and thus $J(\tilde{f}) \subset X(S)\times\C$.

Following \cite[Definition 2.6, Definition 2.7]{MR1827119}, we now introduce the notions about important classes of associated rational skew product maps.
The associated rational skew product map $\tilde{f} \colon X(S) \times \rsp \circlearrowleft$ is said to be \textit{expanding along fibres} if $J(\tilde{f}) \neq \varnothing$ and there exist $c > 0$ and $\lambda>1$ such that for each $n \in \N$
\begin{equation}\label{3co5RIbc1p}
\inf _{\tilde{z} \in J(\tilde{f})}|(\tilde{f}^{n})^{\prime}(\tilde{z})| \geq c \lambda^n. 
\end{equation}

When the rational skew product map associated with a rational semigroup is expanding in the sense of \cite{MR1767945}, it is a special case of open, distance-expanding maps, which have been studied in great detail; see \cite{MR2656475, MR4510391, MR4486444}. 
This is also the case for rational skew product maps associated with finitely generated rational GDMSs being expanding along fibres; see \cite{arimitsu2024}.

For $g\in \Rat$, let $\Crit(g)\subset\rsp$ denote the set of critical points of $g$, i.e. the set of points where $g$ is not locally injective.
We write $\CV(g):=g(\Crit(g))$ for the set of critical values of $g$.
In particular, if $z\in \C$ and $g(z)\in \C$, then $z\in \Crit(g)$ is equivalent to $g^{\prime}(z)=0$ in the Euclidean coordinate on $\C$. 
Write $C(\tilde{f}):=\{(\xi, z)\in X(S) \times \rsp \colon  z \in \Crit(g_{\xi,1})\}$.
We now define 
\[
P(\tilde{f}) := \overline{\bigcup_{n \in \N} \tilde{f}^{n}(C(\tilde{f}))}^{ X(S) \times \rsp}.
\]
The set $P(\tilde{f})$ is called the \textit{post-critical set} for the map $\tilde{f}$. 
We say that the map $\tilde{f}$ is \textit{hyperbolic along fibres} if {$P(\tilde{f}) \subset F(\tilde{f})$}.
For each vertex $i \in V$, the \textit{post-critical set of ${S}$ at the vertex $i$} is defined by
\[
P_{i}(S) :=\overline{\bigcup_{h \in H^{i}(S)} \CV(h)}^{ \rsp},
\]
where $\CV(h)$ denotes the set of critical values of $h \in H(S)$. 
A rational GDMS $S$ is said to be \textit{hyperbolic} if $P_{i}(S) \subset F_{i}(S)$ for each $i \in V$.

Denote by $\Aut(\rsp)$ the set of M\"obius transformations of $\rsp$ and denote by $\lox(\rsp)$ the set of loxodromic elements of $\Aut(\rsp)$.
We say that a non-identity element $g$ in $\Aut(\rsp)$ is \textit{loxodromic} if $g$ has two fixed points whose multipliers have moduli different from one. 
For an element $g \in \Rat$, we write $\deg(g) \in \N$ to refer to its \textit{degree}.
\begin{prp}[\cite{arimitsu2024}]\label{jSk7Eo33NCD} 
Let $S:= (V, E,(\Gamma_{e})_{e \in E})$ be a finitely generated, irreducible rational GDMS.
\begin{enumerate}[label=(\roman*)]
\item \label{FLr4liUUuq1} 
If the map $\tilde{f}$ is expanding along fibres, then it is hyperbolic along fibres, and moreover $\bigcup_{i \in V} H_{i}^{i}(S) \cap \Aut(\rsp) \subset \lox(\rsp)$. 
\item \label{FLr4liUUuq2} Suppose that there exists $g \in \Gamma(S)$ with $\deg(g) \geq 2$ and $\bigcup_{i \in V} H_{i}^{i}(S) \cap \Aut(\rsp) \subset \lox(\rsp)$.
If the map $\tilde{f}$ is hyperbolic along fibres, then it is expanding along fibres.
\end{enumerate}
\end{prp}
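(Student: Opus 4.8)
My plan is to reproduce, in the fibred setting over the subshift of finite type $(X(S),\sigma)$, the classical equivalence between expansion and hyperbolicity for single rational maps and for rational semigroups. Finite generation makes $X(S)$, and hence $X(S)\times\rsp$, compact; by \eqref{sKJ4fORxv7} the set $J(\tilde{f})$ is a compact subset of $X(S)\times\C$, and $C(\tilde{f})$ is compact; I use these freely. For Part (i) I would first note that $C(\tilde{f})\cap J(\tilde{f})=\varnothing$: at a critical point $\tilde{f}'=0$, so the chain rule gives $(\tilde{f}^{n})'=0$ for all $n\geq 1$, contradicting \eqref{3co5RIbc1p}; hence $\delta_{1}:=d(C(\tilde{f}),J(\tilde{f}))>0$. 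Using that expansion along fibres makes $\tilde{f}|_{J(\tilde{f})}$ an open distance-expanding map (see \cite{arimitsu2024}), I would pass to an adapted fibre metric, choose an open neighbourhood $N\subset\{d(\cdot,J(\tilde{f}))<\delta_{1}/2\}$ of $J(\tilde{f})$ on which the fibre maps are uniformly expanding and which contains no critical points, and record the usual bounded-distortion estimate for inverse branches of $\tilde{f}^{n}$ that follow a backward orbit remaining in $N$. To prove $P(\tilde{f})\subset F(\tilde{f})$ I argue by contradiction: given $\tilde{p}=\lim_{k}\tilde{f}^{n_{k}}(\tilde{w}_{k})\in P(\tilde{f})\cap J(\tilde{f})$ with $\tilde{w}_{k}\in C(\tilde{f})$, first $n_{k}\to\infty$ (else a limit of the $\tilde{w}_{k}$ is a critical point lying in $J(\tilde{f})$ by \eqref{l0mSj4Qp58}); since $\tilde{w}_{k}\notin N$ but $\tilde{f}^{n_{k}}(\tilde{w}_{k})\to\tilde{p}\in N$, the backward orbit leaves $N$, and if $\tilde{y}_{k}$ is the last point before it enters $N$ for good, the distortion estimate along the tail inside $N$ forces $d(\tilde{f}(\tilde{y}_{k}),J(\tilde{f}))\to 0$; a subsequential limit $\tilde{y}_{*}$ then lies outside the open set $N$ yet has $\tilde{f}(\tilde{y}_{*})\in J(\tilde{f})$, so $\tilde{y}_{*}\in\tilde{f}^{-1}(J(\tilde{f}))=J(\tilde{f})\subset N$, a contradiction.

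For the loxodromicity part of (i), let $h=g_{\xi}\in H_{i}^{i}(S)\cap\Aut(\rsp)$ with $\bi(\xi)=\bt(\xi)=i$, and let $\bar{\xi}:=\xi\xi\xi\cdots\in X(S)$ be the associated $\sigma^{|\xi|}$-periodic point. Using $J(\tilde{f})\neq\varnothing$, irreducibility (to realise a path ending at $i$), surjectivity of the fibre maps, and complete invariance \eqref{l0mSj4Qp58} (to prepend paths to a point of $J(\tilde{f})$), I would first produce a point of $J(\tilde{f})$ over a base with initial vertex $i$; prepending copies of $\xi$ — admissible because $\bt(\xi)=i$ — and taking a subsequential limit (legitimate since $X(S)\times\rsp$ is compact and $J(\tilde{f})$ closed) yields $(\bar{\xi},w_{*})\in J(\tilde{f})$. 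Its two-sided $\tilde{f}^{|\xi|}$-orbit $\{(\bar{\xi},h^{n}(w_{*}))\}_{n\in\Z}$ lies in $J(\tilde{f})\subset X(S)\times J(S)$, so \eqref{3co5RIbc1p} makes the Euclidean derivatives $|(h^{n})'(w_{*})|=|(\tilde{f}^{|\xi|n})'(\bar{\xi},w_{*})|$ grow at least geometrically. This is impossible unless $h\in\lox(\rsp)$: if $h$ is the identity, parabolic, or elliptic, then since the whole $h$-orbit of $w_{*}$ stays in the compact set $J(S)\subset\C$, a short argument with the normal form of $h$ (multiplier of modulus $1$ at a fixed point in the first two cases, relative compactness of $\{h^{n}\}$ in $\Aut(\rsp)$ in the elliptic case) keeps those derivatives bounded.

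For Part (ii), assume $\tilde{f}$ is hyperbolic along fibres, $\bigcup_{i}H_{i}^{i}(S)\cap\Aut(\rsp)\subset\lox(\rsp)$, and some $g\in\Gamma(S)$ has $\deg g\geq 2$. Since $P(\tilde{f})$ is compact and contained in the open set $F(\tilde{f})$, $r_{0}:=d(P(\tilde{f}),J(\tilde{f}))>0$, and since $P(\tilde{f})$ is forward invariant, $\tilde{f}(P(\tilde{f}))\subseteq P(\tilde{f})$, for every $(\xi,z)\in J(\tilde{f})$ and $n\geq 1$ the branch of $g_{\xi|n}^{-1}$ through $z$ extends to the ball of spherical radius $r_{0}$ about $g_{\xi|n}(z)$, with image in the fibre over $\xi$ disjoint from $P(\tilde{f})$. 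From here the standard hyperbolicity-implies-expansion mechanism applies: the presence of the degree-$\geq 2$ map and irreducibility force the fibre-slices of $P(\tilde{f})$ to have at least three points over the non-exceptional fibres, so these inverse branches are normal families by Montel's theorem; the usual argument showing that locally uniform limits of such branches are constant gives contraction; and a compactness argument over the finitely many fibre data upgrades this to a uniform exponential rate $\operatorname{diam}\bigl(g_{\xi|n}^{-1}(B(g_{\xi|n}(z),r_{0}))\bigr)\le C\theta^{n}$ with $\theta<1$, which is exactly \eqref{3co5RIbc1p} for $\lambda=\theta^{-1}$. The loxodromicity hypothesis is consumed precisely in handling the exceptional fibres — periodic ones on which every map is a M\"obius transformation, so that such maps (having no critical points) are invisible to $P(\tilde{f})$: hyperbolicity alone would permit a parabolic or elliptic return map and hence no expansion, whereas a loxodromic return map is locally repelling at its repelling fixed point and supplies the missing expansion.

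The step I expect to be the main obstacle is this uniform-contraction argument in Part (ii): converting the \emph{open} inclusion $P(\tilde{f})\subset F(\tilde{f})$, together with the structural hypotheses, into a \emph{uniform exponential} lower bound for $|(\tilde{f}^{n})'|$ over all of $J(\tilde{f})$. This needs a careful Koebe/Montel argument adapted to the fibred system, plus a genuinely separate treatment of the M\"obius-periodic fibres, and it is there that the loxodromicity assumption does its work. A secondary technical point, in Part (i), is making the bounded-distortion inverse-branch estimate work in a product-metric neighbourhood of $J(\tilde{f})$, where the nearest point of $J(\tilde{f})$ may lie in a neighbouring fibre and one must argue throughout with the adapted fibre metric furnished by the open distance-expanding structure of $\tilde{f}|_{J(\tilde{f})}$.
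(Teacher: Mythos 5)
First, a point of comparison: the paper itself gives no proof of Proposition \ref{jSk7Eo33NCD} — it is quoted from \cite{arimitsu2024} — so there is no in-paper argument to measure yours against, and I can only assess the proposal on its own terms. Your Part (i) follows the standard route and is essentially sound: $C(\tilde f)\cap J(\tilde f)=\varnothing$ from \eqref{3co5RIbc1p} with $n=1$, compactness of $C(\tilde f)$ under finite generation, complete invariance \eqref{l0mSj4Qp58}, and contracting inverse branches on a neighbourhood $N$ of $J(\tilde f)$ are all available. Two remarks: the cleanest packaging of your ``last exit time'' argument is simply that $\tilde f^{-1}(N)\subset N$ for a sufficiently small product-metric neighbourhood $N$, so that the (closed) complement of $N$, which contains $C(\tilde f)$, is forward invariant and therefore contains $P(\tilde f)$; and your step ``distortion along the tail forces $d(\tilde f(\tilde y_k),J(\tilde f))\to 0$'' needs the tail lengths $n_k-m_k$ to be unbounded, so the bounded case must be disposed of separately (easily, via $\tilde f^{-c}(J(\tilde f))=J(\tilde f)$). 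The loxodromicity argument via the periodic base point $\xi^{\infty}$ and the normal forms of non-loxodromic M\"obius maps is correct.

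The genuine gap is in Part (ii). Your assertion that the degree-$\geq 2$ generator and irreducibility ``force the fibre-slices of $P(\tilde f)$ to have at least three points over the non-exceptional fibres'' is unjustified and fails already for $\#V=1$ and $\Gamma=\{z\mapsto z^{2}\}$, where the postcritical set is $\{0,\infty\}$ yet the conclusion holds; so Montel on the complement of the postcritical set cannot be the engine of the proof as stated. Moreover, the dichotomy ``Montel on non-exceptional fibres / loxodromic return maps on M\"obius-periodic fibres'' neither covers words mixing M\"obius and higher-degree generators nor explains how local repulsion at a single repelling fixed point of one return map propagates to the uniform bound \eqref{3co5RIbc1p} over all of $J(\tilde f)$. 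That uniformisation — typically achieved by proving that the diameters of all components of $g_{\xi}^{-1}(D(y,r_{0}))$ meeting $J_{\bi(\xi)}(S)$ tend to $0$ uniformly in $|\xi|$, via a contradiction argument excluding non-constant limits of inverse branches, with the loxodromicity hypothesis used precisely to kill the M\"obius-only words — is the entire content of the implication, and you have left it, by your own admission, as a black box. As written, Part (ii) is a plausible programme rather than a proof.
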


\begin{rmk}
Let $S:= (V, E,(\Gamma_{e})_{e \in E})$ be a finitely generated, irreducible rational GDMS. 
The map $\tilde{f}$ is hyperbolic along fibres if and only if the system $S$ is hyperbolic.
\end{rmk}

\subsection{Statement of results}\label{8Tn5I8uaaf}

Up until Main Theorem \ref{GQc0T8Xqyu}, we fix a finitely generated, irreducible rational GDMS denoted by $S=(V, E, (\Gamma_{e})_{e \in E})$.
Recall that
\[
I:=\{(g,e)\in \Rat\times E : g\in \Gamma_{e}\}.
\]
For $\alpha=(g,e)\in I$ we write
\[
g_{\alpha}:=g,\quad e_{\alpha}:=e,\quad
\bi(\alpha):=\bi(e) \quad \text{ and } \quad \bt(\alpha):=\bt(e).
\]

For $i, j \in V$, we define
\[
I_{(i)} := \{ \alpha \in I \colon \bi(\alpha) = i\}, \quad I^{(j)} := \{ \alpha \in I \colon \bt(\alpha) = j\} \text{ and }I_{(i)}^{(j)} := I_{(i)} \cap I^{(j)}.
\]

\begin{dfn}
Let $S=(V, E, (\Gamma_{e})_{e \in E})$ and let $I$ be the index set of generators as above.
A family $a=(a_{\alpha})_{\alpha \in I} \in [0,\infty)^I$ is called a \textit{(vertex-wise) weight}
if $\sum_{\alpha \in I^{(j)}} a_{\alpha} = 1$ for every $j\in V$.
We denote the set of all weights by $\mathcal{W}(S)$.
We write $\mathcal{W}^{+}(S):=\left\{a \in \mathcal{W}(S): \forall \alpha \in I  (a_{\alpha}>0) \right\}$.
\end{dfn}

Let $t\in \R$. 
We define the $\#V\times \#V$ matrix
$M^{(t)} = (M^{(t)}_{ij})_{i,j \in V}$ given by
\[
M^{(t)}_{ij}
:=
\sum_{\alpha \in I_{(i)}^{(j)}}
\deg(g_{\alpha})^{t}
\]
for $i,j \in V$.
We call $M^{(t)}$ the \textit{rational degree matrix} of the system $S$ (with inverse temperature $t$). 
When $t = 1$, we simply write $M=(M^{(1)}_{ij})_{i,j \in V}$.
Let 
\[
\rho_{t} = \rho(M^{(t)})
\]
denote the spectral radius of $M^{(t)}$.

Let $M^{(t)}$ be the rational degree matrix of $S$ and
let $\rho_{t}>0$ and $u^{(t)}=(u^{(t)}_i)_{i \in V} \in(0,\infty)^V$
be a left Perron-Frobenius eigenpair, $(u^{(t)})^{ \top}M^{(t)}
= \rho_{t} (u^{(t)})^{ \top}$.
For $\alpha \in I$, we define
\[
a^{(t)}_{\alpha}
 :=
 \frac{\deg(g_{\alpha})^t u^{(t)}_{\bi(\alpha)}}
{\rho_{t} u^{(t)}_{\bt(\alpha)}} \;>\;0.
\]
Note that $(a^{(t)}_{\alpha})_{\alpha \in I}$ belongs to $\mathcal{W}^{+}(S)$.
Let $h_{\mathrm{top}}(\tilde{f})$ denote the topological entropy of $\tilde{f}|_{J(\tilde{f})}$.

The author of \cite{MR1827119} proved the following fact: the topological entropy of the rational skew product map associated with a generator system $\{g_{1}, \ldots, g_{n}\}$ coincides with $\log (\sum_{j=1}^n \deg(g_j))$; see \cite[Theorem 6.10]{MR1827119}.

\begin{mthm}[see Theorem \ref{NBnbDkLhPN}]\label{GQc0T8Xqyu}
Let $S=(V, E, (\Gamma_{e})_{e \in E})$ be a finitely generated, irreducible rational GDMS.
Assume that the associated skew product $\tilde{f}$ is expanding along fibres.
Then, we have
\[
h_{\mathrm{top}}(\tilde{f}) 
=
\log\rho(M).
\]
\end{mthm}

Let $Y$ be a compact metric space with a distance $d_{Y}$ and let $T \colon Y\to Y$ be continuous.
We call the pair $(Y, T)$ a \textit{(topological) dynamical system}. 
For $n \in \N$, we define the \textit{Bowen metric} 
$d_{n}(x,y):=\max_{0\leq k<n} d_Y\left(T^k x, T^k y\right)$ and \textit{Bowen ball}  $B_{n}(x,\varepsilon):=\{y\in Y:\ d_{n}(x,y)<\varepsilon\}$.
For a real-valued function $\phi$ on $Y$, we write $\calS_{n}\phi(y):=\sum_{k=0}^{n-1}\phi(T^k y)$ for $y \in Y$.
Let $C(Y)$ denote the set of real-valued continuous functions on $Y$.

Up until Main Theorem \ref{mthm:B}, we fix a rational GDMS of the following class.
Let $S=(V, E, (\Gamma_{e})_{e \in E})$ be a finitely generated, irreducible rational GDMS.
Assume that the map $\tilde{f}$ is expanding along fibres.
Define a map $\tilde{\varphi} \colon J(\tilde{f}) \to \R$, 
\begin{equation*}
\tilde{\varphi}(\tilde{z}):= -\log |(\tilde{f})^{\prime}(\tilde{z})|
\end{equation*}
for $\tilde{z} := (\xi, z) \in J(\tilde{f})$.
For $\phi \in C(Y)$, let $\Press (T, \phi)$ denote the \textit{topological pressure} of $(T, \phi)$; refer to \cite{MR648108} for example.
For $u \in \R$, we write $\tilde\varphi_{u} := u\tilde\varphi$.
Moreover, we define a \textit{topological pressure function} $\mathcal{P} \colon \R \to \R\cup \{\infty\}$ given by 
\[
\mathcal{P} (u) :=\Press \left(\tilde{f}|_{J(\tilde{f})}, \tilde\varphi_{u}\right).
\]
Let $\delta \in \R$ denote the Bowen parameter of $S$, the unique zero of the function $\mathcal{P}$; see \cite{arimitsu2024}.
We denote $\tilde{\pi}_{2} := \pi_{2}|_{J(\tilde{f})}$ and $\tilde{\pi}_{2,i} := \pi_{2}|_{J_{i}(\tilde{f})}$ for $i \in V$. 
When the map $\tilde{f}$ is topologically transitive, it is known that there exists $\delta$-conformal measure on $J(\tilde{f})$ given in  Proposition \ref{qQrOdN2Nmt}. 
Let $\nu_{\delta}$ denote the push-forward measure of $\tilde\nu_{\delta}$ under $\tilde{\pi}_{2}$ given by 
\[
\nu_{\delta} (B) := \tilde\nu_{\delta}(\tilde{\pi}_{2}^{-1}(B))
\] 
for each Borel set $B \subset J(S)$.

We now note that the system $S$ is hyperbolic as its associated rational skew product map is expanding along fibres.
Let $D(z, r)$ denote the \textit{Euclidean disc} with centre $z \in \C$ and radius $r >0$.
Consider a family $(D(y_{i}, R))_{i \in V}$ with radius $R >0$ such that
$y_{i} \in J_{i}(S)$ and $\overline{D(y_{i},2R)}^{ \C}\cap P_{i}(S)=\varnothing$ for each $i \in V$.
For $i \in V$, we define a \textit{vertex-wise hole} for $S$ of radius $R$ by 
\begin{equation}\label{ss4cNJda4m}
\sfH_{i}(R):=D(y_{i}, R)\cap J_{i}(S).
\end{equation}

\begin{dfn}
Assume that $S$ is hyperbolic.
Let $R>0$. A family $(\sfH_{i}(R))_{i \in V}$ is called a \textit{family of vertex-wise holes of radius $R$}
if there exist points $y_{i}\in J_{i}(S)$ such that
$\overline{D(y_{i},2R)}^{ \C}\cap P_{i}(S)=\varnothing$ and
\[
\sfH_{i}(R)=D(y_{i},R)\cap J_{i}(S)
\]
for all $i \in V$.
\end{dfn}

The condition $\overline{D(y_{i},2R)}^{ \C}\cap P_{i}(S)=\varnothing$ guarantees that the relevant inverse branches
are well-defined on $D(y_{i},2R)$.
For the rest of this section, we additionally fix vertex-wise holes $(\sfH_{i}(R))_{i \in V}$. 
For $n\ge1$, we define the \textit{$n$-hole preimage} by
\begin{equation}\label{w0bkugM7X5}
\sfHP_{n}(R):=
\bigcup_{\xi \in X^{n}(S)} g_{\xi}^{-1}(\sfH_{\bt(\xi)}(R)).
\end{equation}
Note that $\sfHP_{n}(R) \subset J(S)$ by Proposition \ref{55mENtyQLm}. 
Next, we define the \textit{conformal preimage decay exponent} by
\[
e_{\delta}(R)
:=
-\limsup_{n \to \infty}\frac{1}{n}\log \frac{\nu_{\delta}(\sfHP_{n}(R))}{N_n},
\]
where $N_n:=\sum_{\xi \in X^{n}(S)}\deg(g_{\xi})$ for $n \in \N$.
We define the \textit{lifted hole} on the skew product Julia set, 
\[
\widetilde{\sfH}(R)
:=\left(\bigcup_{i \in V} (X_{i}(S)\times \sfH_{i}(R))\right)\cap J(\tilde{f}).
\]

\begin{dfn}
For $n\in \N$, we define the \textit{$n$-lifted hole preimage} by
\begin{equation}\label{sqBBlOyDPn}
\widetilde{\sfHP}_{n}(R):=\tilde{f}^{-n} \left(\widetilde{\sfH}(R)\right).
\end{equation}
For convenience, we write $\widetilde{\sfHP}(R):=(\widetilde{\sfHP}_{n}(R))_{n \in \N}$.

\end{dfn}

\begin{rmk}
Escape rates and survivor sets for open dynamical systems have been studied by many authors; see e.g. \cite{MR534126, MR2199394, MR2535206, MR2955314, MR2995652}.
We emphasise that our main objects $\sfHP_{n}(R)$, $\widetilde{\sfHP}_{n}(R)$ and $e_{\delta}(R)$ are tailored to the GDMS setting and, in particular, involve an averaging/normalisation by $N_n$.
To the best of our knowledge, we could not find a counterpart treated explicitly in the existing literature on open-systems.
\end{rmk}

We slightly generalise the capacity topological pressure defined in \cite{MR969568}.
This notion was originally designed to study subsets which are not necessarily invariant or closed; see also \cite{MR1489237}.

\begin{dfn}[\cite{MR969568}]
For $Z\subset Y$, $\phi \in C(Y)$ and $\varepsilon>0$, let $\mathcal{C}_n(Z,\varepsilon)$ be the collection of at most countable sets
$E\subset Y$ such that
\[
Z\subset \bigcup_{x\in E} B_{n}(x,\varepsilon).
\]
For a Borel set $Z \subset Y$, $\phi \in C(Y)$ and $\varepsilon>0$, we define
\[
\Lambda_n(Z,\phi,\varepsilon)
:=\inf_{E \in \mathcal{C}_n(Z,\varepsilon)}
\sum_{x\in E}\exp\left(\sup_{y\in B_{n}(x,\varepsilon)}\calS_{n}\phi(y)\right).
\]
The \textit{upper} and  \textit{lower capacity topological pressure at scale $\varepsilon$} are defined as 
\[
\overline{\calCP}(T, \phi, Z, \varepsilon)
:=
\limsup_{n\to\infty}\frac{1}{n}\log \Lambda_n(Z,\phi,\varepsilon) \quad \text{and} \quad \underline{\calCP}(T,\phi,Z,\varepsilon)
:=
\liminf_{n\to\infty}\frac1n\log\Lambda_n(Z,\phi,\varepsilon). 
\]
The \textit{upper} and \textit{lower capacity topological pressure} are defined to be 
\[
\overline{\calCP}(T, \phi, Z)
:=
\lim_{\varepsilon\to0}\overline{\calCP}(T, \phi, Z, \varepsilon)
\quad \text{and} \quad 
\underline{\calCP}(T,\phi,Z)
:=
\lim_{\varepsilon\to0}\underline{\calCP}(T,\phi,Z,\varepsilon).
\]
\end{dfn}

\begin{dfn}\label{BphPCB0UAa}
Fix $\phi \in C(Y)$, $\varepsilon>0$, and a sequence of Borel sets
$\mathcal{Z} := (Z_n)_{n \in \N}$ in $Y$.
Define the \textit{upper} and \textit{lower sequential capacity topological pressure at scale $\varepsilon$} by
\[
\overline{\calCP}(T, \phi, \mathcal{Z}, \varepsilon)
:=
\limsup_{n\to\infty}\frac1n\log \Lambda_n(Z_n,\phi,\varepsilon) \quad \text{and} \quad
\underline{\calCP}(T, \phi, \mathcal{Z}, \varepsilon)
:=
\liminf_{n\to\infty}\frac1n\log \Lambda_n(Z_n,\phi,\varepsilon).
\]
Define the \textit{upper} and \textit{lower sequential capacity topological pressure} by
\[
\overline{\calCP}(T, \phi, \mathcal{Z})
:=
\lim_{\varepsilon\to0}\overline{\calCP}(T, \phi, \mathcal{Z}, \varepsilon)  \quad \text{and} \quad
\underline{\calCP}(T, \phi, \mathcal{Z})
:=
\lim_{\varepsilon\to0}\underline{\calCP}(T, \phi, \mathcal{Z}, \varepsilon).
\]
\end{dfn}

We say a rational GDMS $S$ satisfies the \textit{vertex-wise separation condition} (VSC) if 
\[
g_{1}^{-1}\left(J_{\bt(e_{1})}(S)\right) \cap g_{2}^{-1}\left(J_{\bt(e_{2})}(S)\right)=\varnothing
\]
for {each} $i \in V$ and for all $e_{1}, e_{2} \in E$ with $\bi(e_{1})=\bi(e_{2})=i$ and $g_{1} \in \Gamma_{e_{1}}, g_{2} \in \Gamma_{e_{2}}$ with {$(g_{1}, e_{1}) \neq (g_{2}, e_{2})$}. 
In the special case where a finitely generated rational GDMS is a rational semigroup $G = \langle f_{1}, \ldots, f_{n} \rangle$, the VSC is the separation condition given in \cite[Lemma 7.1]{MR1767945}. 
The VSC was originally introduced in \cite{MR4002398} and named as the backward separating condition.

\begin{mthm}[see Propositions \ref{vGhPS8pfoC}, \ref{grxaIp1jSL} and  Theorem \ref{RspvxUCucI}]
\label{mthm:B}
Let $S=(V, E, (\Gamma_{e})_{e \in E})$ be a finitely generated, irreducible rational GDMS.
Assume that the system $S$ satisfies the VSC and that the associated skew product $\tilde{f}$ is expanding along fibres
and topologically transitive on $J(\tilde{f})$.
Take an arbitrary family of vertex-wise holes $(\sfH_{i}(R))_{i \in V}$ for $S$.
Then, we have 
\[
e_{\delta}(R) = h_{\mathrm{top}}(\tilde{f}) - \overline{\calCP}\left(\tilde{f}|_{J(\tilde{f})},\delta\tilde{\varphi}, \widetilde{\sfHP}(R)\right)
> 0.
\]
\end{mthm}

\section{Preliminaries}
In this section, we collect relevant facts from previous studies mostly about rational GDMSs.

\begin{prp}[\cite{MR4002398}, Lemma 2.15, Proposition 2.16]\label{55mENtyQLm}
Let $S=(V, E, (\Gamma_{e})_{e \in E})$ be a finitely generated rational GDMS.
For each $e \in E$, we have 
\[
\bigcup_{g \in \Gamma_{e}}g^{-1}(J_{\bt(e)}(S)) = J_{\bi(e)}(S).
\]
\end{prp}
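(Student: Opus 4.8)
The plan is to prove the two inclusions of the identity separately, working on $\rsp$ with the spherical metric and using only the description of $J_i(S)$ as the set of points at which $H_i(S)$ fails to be equicontinuous. The first ingredient I would isolate is a structural decomposition of $H_i(S)$. Put $E_i:=\{e\in E:\bi(e)=i\}$. A word $\xi\in X^*(S)$ with $\bi(\xi)=i$ either has $|\xi|=1$, so $g_\xi\in\Gamma_{e_{\xi,1}}$ with $e_{\xi,1}\in E_i$, or has $|\xi|\ge 2$, and then, writing $\xi':=(\xi_2,\dots,\xi_{|\xi|})$, one has $g_\xi=g_{\xi'}\circ g_{\xi,1}$ with $g_{\xi,1}\in\Gamma_{e_{\xi,1}}$, $e_{\xi,1}\in E_i$, and $g_{\xi'}\in H_{\bt(e_{\xi,1})}(S)$; conversely every map of either type arises as some $g_\xi$. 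Hence, abbreviating $H_j(S)\circ g:=\{\phi\circ g:\phi\in H_j(S)\}$,
\[
H_i(S)=\Bigl(\bigcup_{e\in E_i}\Gamma_e\Bigr)\cup\Bigl(\bigcup_{e\in E_i}\ \bigcup_{g\in\Gamma_e}H_{\bt(e)}(S)\circ g\Bigr).
\]
Alongside this I would record a few routine facts about equicontinuity of families of continuous self-maps of $\rsp$: (i) a finite family is equicontinuous at every point; (ii) if $\mathcal F$ is equicontinuous at $w_0$ and $g$ is continuous with $g(z_0)=w_0$, then $\{\phi\circ g:\phi\in\mathcal F\}$ is equicontinuous at $z_0$; (iii) if moreover $g\in\Rat$ — hence an open mapping — with $g(z_0)=w_0$ and $\{\phi\circ g:\phi\in\mathcal F\}$ is equicontinuous at $z_0$, then $\mathcal F$ is equicontinuous at $w_0$ (a ball about $z_0$ on which $\{\phi\circ g\}$ is controlled has open image, which therefore contains a ball about $w_0$ every point of which is $g(z)$ for some $z$ near $z_0$); (iv) a finite union of families each equicontinuous at a point is equicontinuous there, so if such a union is not equicontinuous at a point, then neither is one of its members.

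For the inclusion $\bigcup_{g\in\Gamma_e}g^{-1}(J_{\bt(e)}(S))\subseteq J_{\bi(e)}(S)$, I would fix $e\in E$ and $g\in\Gamma_e$ and suppose $z_0\in F_{\bi(e)}(S)$. By the decomposition $H_{\bt(e)}(S)\circ g\subseteq H_{\bi(e)}(S)$, so this subfamily is equicontinuous at $z_0$; fact (iii), applied with the open map $g$, then shows $H_{\bt(e)}(S)$ is equicontinuous at $g(z_0)$, i.e. $g(z_0)\in F_{\bt(e)}(S)$. Hence $F_{\bi(e)}(S)\subseteq g^{-1}(F_{\bt(e)}(S))$, and passing to complements and taking the union over $g\in\Gamma_e$ yields the inclusion. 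This half is entirely local and holds edge by edge.

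For the reverse inclusion I would fix a vertex $i$ and a point $z_0\in J_i(S)$, so $H_i(S)$ is not equicontinuous at $z_0$. By the decomposition, fact (i) (so that the finite family $\bigcup_{e\in E_i}\Gamma_e$ cannot account for the failure), and fact (iv) — here finite generation enters, ensuring $E_i$ and each $\Gamma_e$, hence the displayed union, are finite — the non-equicontinuity must already occur for some $H_{\bt(e)}(S)\circ g$ with $e\in E_i$, $g\in\Gamma_e$; the contrapositive of (ii) then forces $H_{\bt(e)}(S)$ to be non-equicontinuous at $g(z_0)$, i.e. $g(z_0)\in J_{\bt(e)}(S)$, whence $z_0\in g^{-1}(J_{\bt(e)}(S))$. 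Letting $e$ range over $E_i$ gives $J_i(S)\subseteq\bigcup_{e\in E_i}\bigcup_{g\in\Gamma_e}g^{-1}(J_{\bt(e)}(S))$, which together with the first inclusion yields the asserted identity, with the union understood over all edges sharing the initial vertex $\bi(e)$.

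The step I expect to be the main obstacle is the transfer in (ii)–(iii) between equicontinuity of $H_{\bt(e)}(S)\circ g$ near $z_0$ and of $H_{\bt(e)}(S)$ near $g(z_0)$ when $z_0$ happens to be a critical point of $g$: there $g$ admits no local inverse branch, so neighbourhoods cannot simply be pulled back and one must instead use that a non-constant rational map is an open mapping. One should also keep in mind that equicontinuity at a point is genuinely a pointwise notion, so that restricting to a small ball about $z_0$ costs nothing, and that the reduction to a single summand of the decomposition is legitimate.
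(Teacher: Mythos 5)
The paper offers no proof of this proposition---it is quoted verbatim from [SW19, Lemma 2.15, Proposition 2.16]---so there is no in-paper argument to compare against. Your argument is the standard one for this kind of backward self-similarity: decompose $H_i(S)$ according to the first letter of the word, transfer equicontinuity between $H_{\bt(e)}(S)\circ g$ at $z_0$ and $H_{\bt(e)}(S)$ at $g(z_0)$ using that a non-constant rational map is open (which, as you note, is exactly what rescues the case where $z_0\in\Crit(g)$ and no local inverse branch exists), and use finite generation so that non-equicontinuity of the finite union forces non-equicontinuity of a single summand $H_{\bt(e)}(S)\circ g$. All four auxiliary facts and the decomposition of $H_i(S)$ check out, and the two inclusions follow as you describe.

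The one substantive point is the mismatch you yourself flag in your last sentence: what you prove is
\[
J_i(S)=\bigcup_{e\in E,\ \bi(e)=i}\ \bigcup_{g\in\Gamma_e}g^{-1}\bigl(J_{\bt(e)}(S)\bigr),
\]
whereas the proposition as printed fixes a single edge $e$ and asserts $\bigcup_{g\in\Gamma_e}g^{-1}(J_{\bt(e)}(S))=J_{\bi(e)}(S)$. The printed version is false whenever a vertex has two or more outgoing edges: encode the introduction's example $G=\langle z^2,|a|^{-1}z^2\rangle$ as one vertex $v$ with two loop edges carrying $\Gamma_{e_1}=\{z^2\}$ and $\Gamma_{e_2}=\{|a|^{-1}z^2\}$; then $J_v(S)=\{1\le|z|\le|a|\}$, while $(z^2)^{-1}(J_v(S))=\{1\le|z|\le|a|^{1/2}\}$ is a proper subset. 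So the defect lies in the statement rather than in your proof: the correct assertion is the vertex-wise one you establish, and it is also all the paper ever uses (later references invoke only the inclusion $g_\xi^{-1}(J_{\bt(\xi)}(S))\subset J_{\bi(\xi)}(S)$, which your first half does prove edge by edge, and indeed word by word after iteration).
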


\begin{dfn}
For $\xi \in X^{*}(S)$, we define 
$
[\xi]:=\left\{\tau \in X(S) \colon \tau|_{|\xi|} = \xi \right\},
$ 
which we call the \textit{cylinder set} of $\xi$. 
For two elements $\xi := (\xi_{1}, \xi_{2}, \ldots, \xi_{|\xi|} ) \in X^{*}(S)$ and $\tau \in X^{*}(S) \cup X(S)$ with $\bt(\xi) = \bi(\tau)$, their \textit{{concatenation}} is denoted by $\xi\tau := (\xi_{1}, \cdots, \xi_{|\xi|}, \tau_{1}, \tau_{2}, \ldots) \in X^{*}(S) \cup X(S)$.
We write $\xi^{n}$ for the concatenation of $\xi \in X^{*}(S)$ with itself $n$ times when $\bt(\xi) = \bi(\xi)$.
For $\xi \in X^{*}(S)$, let $\xi^{\infty}$ denote $\tau \in X(S)$ such that for each $n \in \N$ we have $\xi^{n} = \tau|_{n|\xi|}$.
\end{dfn}

The next proposition includes important facts on the dynamics on various Julia sets.
\begin{prp}\label{HSecY02Qdj}
Let $S := (V, E,(\Gamma_{e})_{e \in E})$ be a rational GDMS. 
\begin{enumerate}[label=(\roman*)]
\item\label{nqXaAN56hT}
For each $i \in V$ and $\xi \in X_{i}(S)$, we have $J_{\xi} \subset \bigcap_{n \in \N} g_{\xi|n}^{-1}(J_{\bt(\xi|n)}(S))$.
\item\label{sc15xYijnN}  For each $\xi \in X(S)$, we have $J_{\xi}=g_{\xi,1}^{-1}(J_{\sigma(\xi)})$.
\end{enumerate}
\end{prp}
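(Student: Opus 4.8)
The plan is to reduce both statements to a single elementary lemma on equicontinuity of composed families, and then read off the two inclusions using the factorisation $g_{\xi|m}=g_{(\xi_{n+1},\dots,\xi_m)}\circ g_{\xi|n}$, which holds for all $m>n$ because $\xi\in X(S)$ makes the suffix $(\xi_{n+1},\dots,\xi_m)$ an element of $X^{*}(S)$.

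\textbf{Key Lemma.} Let $h\colon\rsp\to\rsp$ be a non-constant rational map, let $\mathcal F$ be a family of continuous self-maps of $\rsp$, and let $z\in\rsp$ with $w:=h(z)$. Then $\{f\circ h\colon f\in\mathcal F\}$ is equicontinuous at $z$ if and only if $\mathcal F$ is equicontinuous at $w$. I would prove the implication ``$\mathcal F$ equicontinuous at $w$ $\Rightarrow$ $\{f\circ h\}$ equicontinuous at $z$'' by pulling a modulus of equicontinuity back through the continuous map $h$; this uses only continuity of $h$. For the converse I would use that $h$ is an \emph{open} map: given $\varepsilon>0$, take $\delta>0$ witnessing equicontinuity of $\{f\circ h\}$ at $z$; then $h\bigl(\{z'\in\rsp\colon d_{\rsp}(z,z')<\delta\}\bigr)$ is an open neighbourhood of $w$, so it contains $\{w'\in\rsp\colon d_{\rsp}(w,w')<\delta'\}$ for some $\delta'>0$, and every such $w'$ has an $h$-preimage $z'$ with $d_{\rsp}(z,z')<\delta$, giving $d_{\rsp}(f(w),f(w'))=d_{\rsp}(f(h(z)),f(h(z')))<\varepsilon$ for all $f\in\mathcal F$. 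I shall also use freely that adjoining or deleting finitely many continuous maps does not affect equicontinuity of a family at a point.

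For \ref{sc15xYijnN} I would set $h:=g_{\xi,1}$ and use $g_{\xi|(m+1)}=g_{\sigma(\xi)|m}\circ h$ for $m\in\N$, so that $\{g_{\xi|n}\colon n\in\N\}$ and $\{g_{\sigma(\xi)|m}\circ h\colon m\in\N\}$ differ by at most the single map $h=g_{\xi|1}$. Hence $z\in F_{\xi}$ iff $\{g_{\sigma(\xi)|m}\circ h\}_{m\in\N}$ is equicontinuous at $z$, iff (Key Lemma with $\mathcal F=\{g_{\sigma(\xi)|m}\}_{m\in\N}$) $\{g_{\sigma(\xi)|m}\}_{m\in\N}$ is equicontinuous at $h(z)$, iff $h(z)\in F_{\sigma(\xi)}$. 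This is exactly $F_{\xi}=h^{-1}(F_{\sigma(\xi)})$, and taking complements in $\rsp$ yields $J_{\xi}=h^{-1}(J_{\sigma(\xi)})=g_{\xi,1}^{-1}(J_{\sigma(\xi)})$.

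For \ref{nqXaAN56hT} I would fix $n\in\N$, put $j:=\bt(\xi|n)=\bt(\xi_n)$, and show $g_{\xi|n}(J_{\xi})\subset J_{j}(S)$, i.e.\ $g_{\xi|n}(z)\in F_{j}(S)\Rightarrow z\in F_{\xi}$. For $m>n$ the suffix $(\xi_{n+1},\dots,\xi_m)$ lies in $X^{*}(S)$ with $\bi(\xi_{n+1})=\bt(\xi_n)=j$, so $g_{(\xi_{n+1},\dots,\xi_m)}\in H_{j}(S)$ and therefore $\{g_{\xi|m}\colon m>n\}\subset\{g\circ g_{\xi|n}\colon g\in H_{j}(S)\}$. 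If $g_{\xi|n}(z)\in F_{j}(S)$, then $H_{j}(S)$ is equicontinuous at $g_{\xi|n}(z)$, so the continuity-only direction of the Key Lemma (applied to the rational map $g_{\xi|n}$) makes $\{g\circ g_{\xi|n}\colon g\in H_{j}(S)\}$, and hence its subfamily $\{g_{\xi|m}\colon m>n\}$, equicontinuous at $z$; adjoining the finitely many maps $g_{\xi|1},\dots,g_{\xi|n}$ preserves equicontinuity, so $\{g_{\xi|m}\colon m\in\N\}$ is equicontinuous at $z$, i.e.\ $z\in F_{\xi}$. Since $n\in\N$ is arbitrary, $J_{\xi}\subset\bigcap_{n\in\N}g_{\xi|n}^{-1}(J_{\bt(\xi|n)}(S))$. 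The only non-formal point is the converse implication in the Key Lemma, where openness of the (possibly critical, hence non-injective) rational map is used to push equicontinuity forward along $h$; this is the step I expect to require the most care, though it is short, the remainder being bookkeeping with moduli of equicontinuity and with the cocycle factorisation.
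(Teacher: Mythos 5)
Your proposal is correct. The paper does not prove Proposition \ref{HSecY02Qdj} itself but cites it from the literature (\cite{MR4002398}, \cite{MR1827119}), and your argument is the standard one underlying those references: the forward (continuity-only) direction of your Key Lemma gives \ref{nqXaAN56hT} via the factorisation $g_{\xi|m}=g_{(\xi_{n+1},\dots,\xi_m)}\circ g_{\xi|n}$ with $g_{(\xi_{n+1},\dots,\xi_m)}\in H_{\bt(\xi|n)}(S)$, and the converse direction, which correctly invokes openness of the non-constant rational map $g_{\xi,1}$ to push equicontinuity forward, gives the equality $F_{\xi}=g_{\xi,1}^{-1}(F_{\sigma(\xi)})$ and hence \ref{sc15xYijnN} by taking complements. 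All the bookkeeping steps (subfamilies and finite enlargements of equicontinuous families) are sound, so the proof is complete as written.
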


The first statement was given in \cite[Lemma 2.27]{MR4002398}.
The second was given in \cite[Lemma 2.4]{MR1827119} and restated in \cite[Lemma 2.30]{MR4002398}.

\begin{prp}[\cite{arimitsu2024}]\label{oK0KWDG1uX}
Suppose that for a finitely generated, irreducible, non-elementary rational GDMS $S$, the map $\tilde{f}$ is expanding along fibres. 
Then, the map $\tilde{f}$ is topologically transitive on $J(\tilde{f})$.
\end{prp}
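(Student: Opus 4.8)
The plan is to verify topological transitivity directly: for any two nonempty open sets $\tilde U,\tilde W\subset J(\tilde f)$ I will produce $n\in\N$ with $\tilde f^{\,n}(\tilde U)\cap\tilde W\neq\varnothing$. This is legitimate because $(J(\tilde f),\tilde f|_{J(\tilde f)})$ is a topological dynamical system: $J(\tilde f)$ is a closed, hence compact, subset of $X(S)\times J(S)$, $\tilde f$ is continuous, and $\tilde f(J(\tilde f))=J(\tilde f)$ by \eqref{l0mSj4Qp58}. Since $\bigcup_{\xi\in X(S)}\{\xi\}\times J_\xi$ is dense in $J(\tilde f)$, I fix $(\xi_0,z_0)\in\tilde U$ with $z_0\in J_{\xi_0}$ and $(\eta,w)\in\tilde W$ with $w\in J_\eta$, together with $k\in\N$ and $r>0$ so that $\bigl([\xi_0|k]\times D(z_0,r)\bigr)\cap J(\tilde f)\subset\tilde U$. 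Since $(X(S),\sigma)$ is a topologically transitive subshift of finite type ($S$ being irreducible) and, under the expanding-along-fibres hypothesis, $\xi\mapsto J_\xi$ is Hausdorff-continuous, after possibly enlarging $k$ I choose an admissible word from $\bt(\xi_0|k)$ to $\bi(\eta)$ of length $\ell$ as large as I please, so that $\xi\in[\xi_0|k]$ with $\sigma^m\xi=\eta$ for $m:=k+\ell$, and I pick $z\in D(z_0,r)\cap J_\xi$; shrinking, $\{\xi\}\times\bigl(D(z,r')\cap J_\xi\bigr)\subset\tilde U$ for some $r'\in(0,r]$.

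The engine is a uniform fibre blow-up. By Proposition~\ref{jSk7Eo33NCD} the map $\tilde f$ is also hyperbolic along fibres, so $P(\tilde f)\subset F(\tilde f)$; as $P(\tilde f)$ and $J(\tilde f)$ are disjoint compacta, they lie at distance $2\rho_0>0$ in $X(S)\times\C$. For any $\xi'\in X(S)$, $n\in\N$ and $z'\in J_{\xi'}$ one has $g_{\xi'|n}(z')\in J_{\sigma^n\xi'}$, while every critical value of $g_{\xi'|n}$ equals, in the fibre over $\sigma^n\xi'$, an image under some $\tilde f^{\,l}$ of a point of $C(\tilde f)$, hence lies in $P(\tilde f)$; therefore $D(g_{\xi'|n}(z'),2\rho_0)$ contains no critical value of $g_{\xi'|n}$, and all inverse branches of $g_{\xi'|n}$ are holomorphic on $D(g_{\xi'|n}(z'),\rho_0)$. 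Combining the Koebe distortion theorem with the fibrewise expansion \eqref{3co5RIbc1p}, the branch $\phi$ with $\phi(g_{\xi'|n}(z'))=z'$ satisfies $\operatorname{diam}\phi\bigl(D(g_{\xi'|n}(z'),\rho_0)\bigr)\to0$ uniformly as $n\to\infty$; moreover, by Proposition~\ref{HSecY02Qdj} we have $J_{\xi'}=g_{\xi',1}^{-1}(J_{\sigma\xi'})$, hence inductively $J_{\xi'}=g_{\xi'|n}^{-1}(J_{\sigma^n\xi'})$, so $\phi$ carries $D(g_{\xi'|n}(z'),\rho_0)\cap J_{\sigma^n\xi'}$ into $J_{\xi'}$. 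Consequently there is $N=N(r')$ such that
\[
g_{\xi'|n}\bigl(D(z',r')\cap J_{\xi'}\bigr)\ \supseteq\ D\bigl(g_{\xi'|n}(z'),\rho_0\bigr)\cap J_{\sigma^n\xi'}\qquad\text{for all }n\ge N,\ \xi'\in X(S),\ z'\in J_{\xi'}.
\]
Applying this to the chosen $(\xi,z)$ with $n:=N$ gives $\tilde f^{\,n}(\tilde U)\supseteq\{\sigma^n\xi\}\times\bigl(D(y,\rho_0)\cap J_{\sigma^n\xi}\bigr)$, where $y:=g_{\xi|n}(z)\in J_{\sigma^n\xi}$.

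The decisive step is a uniform fibrewise exactness statement: there is $p_0\in\N$, depending only on the global constant $\rho_0$, such that for every $p\ge p_0$, every $\tau\in X(S)$ and every $u\in J_\tau$ one has $g_{\tau|p}\bigl(D(u,\rho_0)\cap J_\tau\bigr)=J_{\sigma^p\tau}$; equivalently, $g_{\tau|p}^{-1}(v)\cap J_\tau$ is $\rho_0$-dense in $J_\tau$ for every $v\in J_{\sigma^p\tau}$. I would prove this by combining the inverse-branch contraction above with the density of backward orbits of points of $J(S)$ and upgrading it to a modulus uniform over the compact base; this is exactly where the \emph{non-elementary} hypothesis enters, guaranteeing that the fibre Julia sets are perfect and that no exceptional point obstructs the blow-up, in the spirit of the classical argument for a single expanding rational map and its rational-semigroup analogue. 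Granting it, I choose $m\ge n+p_0$ in the first paragraph and set $p:=m-n\ (\ge p_0)$; then, since $g_{\tau|p}^{-1}(J_{\sigma^p\tau})=J_\tau$ (Proposition~\ref{HSecY02Qdj} iterated) and $\sigma^m\xi=\eta$,
\[
\tilde f^{\,m}(\tilde U)\ \supseteq\ \tilde f^{\,p}\Bigl(\{\sigma^n\xi\}\times\bigl(D(y,\rho_0)\cap J_{\sigma^n\xi}\bigr)\Bigr)\ =\ \{\sigma^{m}\xi\}\times J_{\sigma^{m}\xi}\ =\ \{\eta\}\times J_\eta\ \ni\ (\eta,w),
\]
so that $\tilde f^{\,m}(\tilde U)\cap\tilde W\neq\varnothing$, which proves the claim.

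The main obstacle is precisely this uniform fibrewise exactness statement: everything else is an assembly of the Koebe distortion estimate, the fibrewise expansion hypothesis \eqref{3co5RIbc1p}, the preimage identities of Proposition~\ref{HSecY02Qdj}, and transitivity of the irreducible subshift $(X(S),\sigma)$. Establishing the exactness step forces one to rule out exceptional points on the fibre Julia sets and to obtain the backward-orbit density with a modulus that does not degenerate as the fibre ranges over $X(S)$ — this is where non-elementarity of $S$ is indispensable and where adapting the single-map and semigroup arguments requires genuine care. A secondary, more routine point used along the way is the Hausdorff-continuity of $\xi\mapsto J_\xi$ under the expanding-along-fibres assumption, which allows one to select a base point $\xi\in[\xi_0|k]$ whose $\sigma$-orbit reaches $\eta$ at a prescribed large time while keeping $D(z_0,r)\cap J_\xi$ nonempty.
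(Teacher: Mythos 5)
First, a point of reference: the paper does not prove Proposition \ref{oK0KWDG1uX} at all --- it is quoted from \cite{arimitsu2024} --- so there is no internal proof to compare yours against; I can only judge the proposal on its own terms.

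On those terms there is a genuine gap, and you have located it yourself: the ``uniform fibrewise exactness'' statement --- that there is $p_0\in\N$ with $g_{\tau|p}\bigl(D(u,\rho_0)\cap J_{\tau}\bigr)=J_{\sigma^{p}\tau}$ for all $p\ge p_0$, all $\tau\in X(S)$ and all $u\in J_{\tau}$ --- is asserted rather than proven, and it carries essentially the entire mathematical content of the proposition. What you do establish (Koebe distortion together with \eqref{3co5RIbc1p} forcing images of small balls to reach a definite size $\rho_0$, the preimage identities of Proposition \ref{HSecY02Qdj}, irreducibility of the subshift) only takes you from ``small ball in $J_{\xi}$'' to ``ball of radius $\rho_0$ in $J_{\sigma^{n}\xi}$''. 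The passage from a ball of \emph{fixed} radius to the \emph{whole} fibre Julia set is exactly where expansion and distortion stop helping: a definite-size ball in $J_{\tau}$ contains a full preimage set $g_{\tau|p}^{-1}(v)\cap J_{\tau}$ only if one already knows such preimage sets are $\rho_0$-dense in the fibre, uniformly in $\tau$ and $v$ --- and that density statement is, for all practical purposes, the exactness one is trying to prove, so the argument as written is close to circular. Closing it requires a genuinely different tool (a Montel normal-families blow-up, which is where non-elementarity actually enters by supplying three omitted values, or an a priori backward-orbit density lemma with a modulus uniform over the compact base), and none of that is supplied. A secondary, smaller gap: the Hausdorff continuity (in particular the lower semicontinuity) of $\xi\mapsto J_{\xi}$, which you invoke to guarantee $D(z_0,r)\cap J_{\xi}\neq\varnothing$ after splicing in the connecting word, is likewise asserted without proof; it does hold under the expanding-along-fibres hypothesis, but it is itself a nontrivial result belonging to the same circle of ideas.
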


\begin{prp}\label{LPtX8cirE5}
Let $S := (V, E,(\Gamma_{e})_{e \in E})$ be a finitely generated, irreducible rational GDMS. 
Then, we have $\pi_{2} (J_{i}(\tilde{f}))=J_{i}(S)$ for each $i \in V$, and thus $\pi_{2}(J(\tilde{f})) = J(S)$.
\end{prp}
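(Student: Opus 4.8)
The plan is to prove the per-vertex identity $\pi_2(J_i(\tilde f)) = J_i(S)$ for each $i \in V$; the global statement $\pi_2(J(\tilde f)) = J(S)$ then follows by taking the union over $i$, since $J(\tilde f) = \bigcup_{i\in V} J_i(\tilde f)$ and $J(S) = \bigcup_{i \in V} J_i(S)$ by definition. Throughout I will use that $J_i(\tilde f)$ is, by \eqref{h6uKK2bqKG}, the closure in $X(S)\times\C$ of $\bigcup_{\xi\in X_i(S)}\{\xi\}\times J_\xi$, and that $\pi_2$ is continuous.

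For the inclusion $\pi_2(J_i(\tilde f)) \subset J_i(S)$, I would first show $\bigcup_{\xi\in X_i(S)} J_\xi \subset J_i(S)$. This is essentially the content of Proposition \ref{HSecY02Qdj}\ref{nqXaAN56hT} combined with Proposition \ref{55mENtyQLm}: for $\xi \in X_i(S)$, we have $J_\xi \subset g_{\xi|n}^{-1}(J_{\bt(\xi|n)}(S))$ for every $n$, and by iterating Proposition \ref{55mENtyQLm} (backward invariance of the vertex Julia sets under admissible compositions) one gets $g_{\xi|n}^{-1}(J_{\bt(\xi|n)}(S)) \subset J_i(S)$, hence $J_\xi \subset J_i(S)$. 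Next, $J_i(S)$ is closed in $\rsp$, and under assumption \eqref{sKJ4fORxv7} it is a compact subset of $\C$; therefore $X(S)\times J_i(S)$ is closed in $X(S)\times\C$ and contains $\bigcup_{\xi\in X_i(S)}\{\xi\}\times J_\xi$, so it contains the closure $J_i(\tilde f)$. Applying $\pi_2$ gives $\pi_2(J_i(\tilde f))\subset J_i(S)$.

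The reverse inclusion $J_i(S) \subset \pi_2(J_i(\tilde f))$ is the substantive direction. Fix $z \in J_i(S)$. I would use Proposition \ref{55mENtyQLm} to peel off one symbol at a time: since $z \in J_i(S) = \bigcup_{e\colon \bi(e)=i}\bigcup_{g\in\Gamma_e} g^{-1}(J_{\bt(e)}(S))$, there is an index $\xi_1 \in I$ with $\bi(\xi_1)=i$ and a point $z_1 \in J_{\bt(\xi_1)}(S)$ with $g_{\xi_1}(z) = z_1$, i.e. $z \in g_{\xi_1,1}^{-1}(J_{\bt(\xi_1)}(S))$. Iterating, I build an admissible sequence $\xi = (\xi_n)_{n\in\N} \in X_i(S)$ and points $z_n \in J_{\bt(\xi|n)}(S)$ with $g_{\xi|n}(z) = z_n$, so that $z \in \bigcap_{n} g_{\xi|n}^{-1}(J_{\bt(\xi|n)}(S))$. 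The remaining issue is that this intersection need not equal $J_\xi$ in general (Proposition \ref{HSecY02Qdj}\ref{nqXaAN56hT} gives only $J_\xi \subset \bigcap_n g_{\xi|n}^{-1}(J_{\bt(\xi|n)}(S))$), so $z$ need not lie in $J_\xi$ itself — only in the fibre direction over some sequence. To land inside $J_i(\tilde f)$, I would instead argue by a limiting/approximation procedure: for each $n$, choose $\tau^{(n)} \in X_i(S)$ agreeing with $\xi$ on the first $n$ symbols and a point $w^{(n)} \in J_{\tau^{(n)}}$ close to $z$, using that $J_{\bt(\xi|n)}(S)$ is the closure of the union of $J_\tau$ over $\tau \in X_{\bt(\xi|n)}(S)$ together with the continuity of inverse branches of $g_{\xi|n}$ on a neighbourhood of $z_n$ avoiding the post-critical set (available because the system is hyperbolic — though this proposition is stated without the hyperbolicity hypothesis, so I would prefer an argument avoiding it). Then $(\tau^{(n)}, w^{(n)}) \in \{\tau^{(n)}\}\times J_{\tau^{(n)}} \subset J_i(\tilde f)$ converges in $X(S)\times\C$ to $(\xi, z)$ as $n\to\infty$, and closedness of $J_i(\tilde f)$ gives $(\xi,z)\in J_i(\tilde f)$, hence $z = \pi_2(\xi,z) \in \pi_2(J_i(\tilde f))$.

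The main obstacle is precisely this last step: promoting "$z$ lies in $\bigcap_n g_{\xi|n}^{-1}(J_{\bt(\xi|n)}(S))$ for some admissible $\xi$" to "$z \in \pi_2(J_i(\tilde f))$" when the naive guess $z \in J_\xi$ can fail. The cleanest route is likely to show that the union $\bigcup_{\tau\in X_{j}(S)} J_\tau$ is dense in $J_j(S)$ for each $j$ — this is plausible given irreducibility and the standard density of repelling-type points in Julia sets of semigroups, and indeed $\pi_2(J_j(\tilde f))\supset \overline{\bigcup_\tau J_\tau}$ would then close the loop — and then combine this density with the one-step backward construction above via a diagonal argument. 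I would check whether such a density statement is already available from \cite{MR4002398} or \cite{MR1827119}; if so, the proof collapses to: $J_i(S) = \overline{\bigcup_{\tau\in X_i(S)}J_\tau} = \overline{\pi_2\big(\bigcup_{\tau\in X_i(S)}\{\tau\}\times J_\tau\big)} \subset \pi_2\big(\overline{\bigcup_{\tau}\{\tau\}\times J_\tau}\big) = \pi_2(J_i(\tilde f))$, where the penultimate inclusion uses compactness of $J_i(\tilde f)$ (hence closedness of its image under $\pi_2$).
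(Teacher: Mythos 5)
The paper itself states Proposition \ref{LPtX8cirE5} without proof (it is essentially \cite[Proposition 3.2]{MR1767945} in the semigroup case, extended to GDMSs in \cite{MR4002398}/\cite{arimitsu2024}), so there is no in-text argument to compare against; judging your proposal on its own terms, the easy half is complete but the substantive half is not. Your proof of $\pi_{2}(J_{i}(\tilde{f}))\subset J_{i}(S)$ is correct: $J_{\xi}\subset J_{i}(S)$ for $\xi\in X_{i}(S)$ (this even follows directly from the definitions, since $\{g_{\xi|n}\colon n\in\N\}\subset H_{i}(S)$ and hence $F_{i}(S)\subset F_{\xi}$), and $X(S)\times J_{i}(S)$ is a closed set containing $\bigcup_{\xi\in X_{i}(S)}\{\xi\}\times J_{\xi}$ and therefore its closure $J_{i}(\tilde{f})$.

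For the reverse inclusion you correctly reduce everything to the density statement $J_{i}(S)=\overline{\bigcup_{\tau\in X_{i}(S)}J_{\tau}}$, but that statement \emph{is} the nontrivial content of the proposition, and you leave it unproved, deferring to a literature search. The symbol-peeling construction does not close the gap (as you yourself observe, $z\in\bigcap_{n}g_{\xi|n}^{-1}(J_{\bt(\xi|n)}(S))$ does not place $(\xi,z)$ in $J(\tilde{f})$), and the intermediate approximation argument both invokes hyperbolicity, which is not a hypothesis here, and is circular: choosing $w^{(n)}\in J_{\tau^{(n)}}$ near $z$ already presupposes density of $\bigcup_{\tau}J_{\tau}$ in the vertex Julia sets. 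The standard way to avoid the density claim is the minimality argument: set $K_{i}:=\pi_{2}(J_{i}(\tilde{f}))$, which is compact; the complete invariance \eqref{l0mSj4Qp58} gives $g^{-1}(K_{\bt(e)})\subset K_{\bi(e)}$ for every $e\in E$ and $g\in\Gamma_{e}$, because $(\eta,g(z))\in J_{\bt(e)}(\tilde{f})$ forces $((g,e)\eta,z)\in\tilde{f}^{-1}(J(\tilde{f}))=J(\tilde{f})$, and $J(\tilde{f})\cap(X_{\bi(e)}(S)\times\C)=J_{\bi(e)}(\tilde{f})$ since $X_{\bi(e)}(S)$ is clopen; one then invokes the minimality of the vertex Julia sets among backward-invariant families of compact sets with at least three points to conclude $J_{i}(S)\subset K_{i}$. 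Either that minimality input or an actual proof of your density claim is needed; as written, the inclusion $J_{i}(S)\subset\pi_{2}(J_{i}(\tilde{f}))$ remains unestablished.
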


The following proposition is a special case of \cite[Theorem 1.18]{MR2237476} and will be used occasionally, combined with Proposition \ref{LPtX8cirE5}.

\begin{prp}[\cite{arimitsu2024}]\label{wPJN4oayEL}
Let $S := (V, E,(\Gamma_{e})_{e \in E})$ be a rational GDMS. 
Suppose the associated rational skew product map $\tilde{f}: X(S)\times \rsp \circlearrowleft$ is expanding along fibres.
Then, we have $J(\tilde{f}) = \bigcup_{\xi \in X(S)}\{\xi\} \times J_{\xi}$. 
\end{prp}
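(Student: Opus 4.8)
The plan is to use the definition $J(\tilde f)=\overline{Y}^{X(S)\times\C}$ with $Y:=\bigcup_{\xi\in X(S)}\{\xi\}\times J_\xi$: the inclusion $Y\subset J(\tilde f)$ is immediate, so the whole content is to show that $Y$ is already closed in $X(S)\times\C$. I would therefore take a sequence $(\xi^{(k)},z^{(k)})\in Y$ with $z^{(k)}\in J_{\xi^{(k)}}$ and $(\xi^{(k)},z^{(k)})\to(\xi,z)$ in $X(S)\times\C$, and prove $z\in J_\xi$ by contradiction, assuming $z\in F_\xi$.

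First I would locate the limit: since $Y\subset J(\tilde f)$ and $J(\tilde f)$ is closed, $(\xi,z)\in J(\tilde f)$ — the order matters, because the expansion estimate \eqref{3co5RIbc1p} is assumed only on $J(\tilde f)$. Applying \eqref{3co5RIbc1p} to $\tilde z=(\xi,z)$ gives $|(g_{\xi|n})'(z)|=|(\tilde f^{n})'(\xi,z)|\ge c\lambda^{n}\to\infty$. Next I would track the forward orbit of $z$: from $z^{(k)}\in J_{\xi^{(k)}}$ and Proposition \ref{HSecY02Qdj}\ref{nqXaAN56hT} one gets $g_{\xi^{(k)}|n}(z^{(k)})\in J_{\bt(\xi^{(k)}|n)}(S)\subset J(S)$ for every $n$; since $\xi^{(k)}\to\xi$ in the ultrametric of $X(S)$, for each fixed $n$ one has $\xi^{(k)}|n=\xi|n$, hence $g_{\xi^{(k)}|n}=g_{\xi|n}$, for all large $k$, so letting $k\to\infty$ and using that $J(S)$ is closed yields $g_{\xi|n}(z)\in J(S)$ for every $n$. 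By the standing assumption \eqref{sKJ4fORxv7}, $J(S)$ is a compact subset of $\C$, so all the points $g_{\xi|n}(z)$ lie in a fixed compact set $K\subset\C$.

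Finally I would derive the contradiction from $z\in F_\xi$: then $\{g_{\xi|n}\}_{n}$ is equicontinuous at $z$ for the spherical metric. Fixing $\varepsilon_0>0$ small enough that the closed spherical $\varepsilon_0$-neighbourhood $K'$ of $K$ is still a compact subset of $\C$, equicontinuity provides $\delta_0>0$ such that the spherical ball $U:=\{w\in\rsp:\ d_{\rsp}(w,z)<\delta_0\}$ lies in $\C$ and $g_{\xi|n}(U)\subset K'$ for all $n$. Hence $\{g_{\xi|n}\}_{n}$ is a uniformly bounded family of holomorphic functions on the Euclidean open set $U$, so it is normal by Montel's theorem, and Cauchy's estimates give $\sup_{n}|(g_{\xi|n})'(z)|<\infty$, contradicting the bound from the previous paragraph. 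Therefore $z\in J_\xi$, $Y$ is closed, and $J(\tilde f)=Y$.

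I do not expect a genuine obstacle here; the statement is essentially bookkeeping organised around the expansion estimate. The two points requiring care are: (i) one must verify $(\xi,z)\in J(\tilde f)$ \emph{before} invoking \eqref{3co5RIbc1p}, since expansion along fibres is only postulated on $J(\tilde f)$ — this is exactly where the hypothesis enters, and without it a limit point with $z\in F_\xi$ could occur; and (ii) one must keep everything inside $\C$ (no poles of $g_{\xi|n}$ on $U$, comparability of the spherical and Euclidean metrics near the fixed compact set $K$) so that the Euclidean version of Montel's theorem applies.
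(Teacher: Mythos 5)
Your argument is correct. The paper itself gives no proof of this proposition (it is quoted from \cite{arimitsu2024} and noted to be a special case of \cite[Theorem 1.18]{MR2237476}), but your proof is the standard one for statements of this type: locate the limit point in $J(\tilde{f})$ first so that \eqref{3co5RIbc1p} applies, use Proposition \ref{HSecY02Qdj} to confine the forward orbit to the compact set $J(S)\subset\C$, and then contradict the unbounded derivative growth via equicontinuity, Montel and the Cauchy estimate. The two delicate points you flag --- invoking the expansion estimate only after establishing $(\xi,z)\in J(\tilde{f})$, and keeping the orbit and the neighbourhood $U$ inside $\C$ so the Euclidean Cauchy estimate is legitimate --- are exactly the ones that need care, and you handle both.
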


The next claim follows from Propositions \ref{LPtX8cirE5} and \ref{wPJN4oayEL}.
\begin{cor}\label{wPJN4oayEM}
Suppose a finitely generated rational GDMS $S := (V, E,(\Gamma_{e})_{e \in E})$ is irreducible and the associated rational skew product map $\tilde{f} \colon X(S)\times \rsp \circlearrowleft$ is expanding along fibres. 
Then, we have $J_{i}(S) = \bigcup_{\xi \in X_{i}(S)}J_{\xi}$ for each $i \in V$. 
\end{cor}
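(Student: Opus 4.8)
The plan is to make the closure in the definition \eqref{h6uKK2bqKG} of $J_i(\tilde f)$ redundant, and then project via Proposition \ref{LPtX8cirE5}. First I would record that $X_i(S)$ is clopen in $X(S)$: whether $\xi$ belongs to $X_i(S)$ depends only on $\xi_1$, and since $S$ is finitely generated the map $\xi\mapsto\xi_1$ is locally constant for $d_{X(S)}$ (two points at distance $<1/2$ share the same first coordinate), so both $X_i(S)$ and its complement in $X(S)$ are open. Consequently $X_i(S)\times\C$ is a closed subset of $X(S)\times\C$.

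Next I would invoke Proposition \ref{wPJN4oayEL}, which under the expanding-along-fibres hypothesis gives $J(\tilde f)=\bigcup_{\xi\in X(S)}\{\xi\}\times J_\xi$. Intersecting this with the closed set $X_i(S)\times\C$ yields
\[
J(\tilde f)\cap\bigl(X_i(S)\times\C\bigr)=\bigcup_{\xi\in X_i(S)}\{\xi\}\times J_\xi .
\]
Since $J(\tilde f)$ is closed by its definition in \eqref{8HWHM8kJ4p}, the left-hand side is closed, hence so is $\bigcup_{\xi\in X_i(S)}\{\xi\}\times J_\xi$. Therefore the closure in \eqref{h6uKK2bqKG} has no effect and
\[
J_i(\tilde f)=\bigcup_{\xi\in X_i(S)}\{\xi\}\times J_\xi .
\]

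Finally I would apply the coordinate projection $\pi_2$. Since $\pi_2$ commutes with unions and $\pi_2(\{\xi\}\times J_\xi)=J_\xi$, the last display gives $\pi_2(J_i(\tilde f))=\bigcup_{\xi\in X_i(S)}J_\xi$; on the other hand Proposition \ref{LPtX8cirE5} identifies $\pi_2(J_i(\tilde f))$ with $J_i(S)$, and the corollary follows. The only point requiring any care is the assertion that $\bigcup_{\xi\in X_i(S)}\{\xi\}\times J_\xi$ is already closed in $X(S)\times\C$ (equivalently, that a limit of pairs $(\xi^{(k)},z_k)$ with $\xi^{(k)}\in X_i(S)$ and $z_k\in J_{\xi^{(k)}}$ still has initial vertex $i$ and second coordinate lying in the fibre Julia set of the limit point); this is exactly what the clopenness of $X_i(S)$ combined with Proposition \ref{wPJN4oayEL} supplies, and the remaining steps are bookkeeping.
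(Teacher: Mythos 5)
Your argument is correct and follows exactly the route the paper intends: the paper derives this corollary directly from Propositions \ref{LPtX8cirE5} and \ref{wPJN4oayEL}, and your write-up simply makes explicit the one implicit topological point (that $X_{i}(S)$ is clopen, so intersecting the union formula for $J(\tilde{f})$ with $X_{i}(S)\times\C$ shows the closure in \eqref{h6uKK2bqKG} is redundant) before projecting with $\pi_{2}$.
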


We use the Koebe distortion theorem of following form.
\begin{prp}[Theorem 2.7, \cite{MR4887995}]\label{ekoebe1}
There exists a function $\bsk \colon [0,1) \to[1, \infty)$ such that for any $z \in \C$, $r>0$, $t \in [0,1)$ and any univalent analytic function $g \colon D(z, r) \to \C$, we have 
\[
\sup \left\{\left|g^{\prime}(w)\right| \colon w \in D(z, t r)\right\} \leq \bsk(t) \inf \left\{\left|g^{\prime}(w)\right| \colon w \in D(z, t r)\right\} .
\]
\end{prp}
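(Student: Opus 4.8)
The plan is to reduce the assertion to the classical Koebe distortion theorem by an affine change of variable and then extract the function $\bsk$ from the resulting two-sided bound on the derivative. First I would recall the classical statement: if $F \colon D(0,1) \to \C$ is univalent with $F(0)=0$ and $F'(0)=1$, then for every $\zeta$ with $|\zeta|=\rho<1$,
\[
\frac{1-\rho}{(1+\rho)^{3}} \le |F'(\zeta)| \le \frac{1+\rho}{(1-\rho)^{3}}.
\]
Since $\rho\mapsto(1-\rho)/(1+\rho)^{3}$ is decreasing and $\rho\mapsto(1+\rho)/(1-\rho)^{3}$ is increasing on $[0,1)$, these bounds hold uniformly over all $|\zeta|\le t$ with $t$ substituted for $\rho$. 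I would take this classical fact as known — it is exactly the form quoted from \cite{MR4887995}, and it follows in the usual way from the area theorem and Bieberbach's estimate $|a_{2}|\le 2$.

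Given $z\in\C$, $r>0$, $t\in[0,1)$ and a univalent analytic map $g\colon D(z,r)\to\C$, I would first note that $g'$ has no zeros in $D(z,r)$ (an interior critical point would destroy injectivity), so $g'(z)\neq 0$; moreover $g'$ is continuous on the compact set $\overline{D(z,tr)}^{\C}\subset D(z,r)$, so the supremum and infimum in the statement are a finite positive maximum and minimum. Next I would normalise by setting
\[
F(\zeta):=\frac{g(z+r\zeta)-g(z)}{r\,g'(z)} \qquad (\zeta\in D(0,1)),
\]
which is univalent on $D(0,1)$ with $F(0)=0$, $F'(0)=1$, and $F'(\zeta)=g'(z+r\zeta)/g'(z)$. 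The correspondence $w\leftrightarrow\zeta=(w-z)/r$ maps $D(z,tr)$ bijectively onto $\{|\zeta|<t\}$ and satisfies $|g'(w)|=|g'(z)|\,|F'(\zeta)|$.

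Applying the classical bounds to $F$ on $\{|\zeta|\le t\}$ then gives
\[
|g'(z)|\,\frac{1-t}{(1+t)^{3}}\;\le\;\inf_{w\in D(z,tr)}|g'(w)|\;\le\;\sup_{w\in D(z,tr)}|g'(w)|\;\le\;|g'(z)|\,\frac{1+t}{(1-t)^{3}},
\]
and dividing the outer terms (the infimum being positive) yields
\[
\frac{\sup_{w\in D(z,tr)}|g'(w)|}{\inf_{w\in D(z,tr)}|g'(w)|}\;\le\;\frac{(1+t)/(1-t)^{3}}{(1-t)/(1+t)^{3}}\;=\;\left(\frac{1+t}{1-t}\right)^{4}=:\bsk(t).
\]
This $\bsk$ is independent of $z$, $r$ and $g$ and maps $[0,1)$ into $[1,\infty)$ with $\bsk(0)=1$, which is precisely the uniform statement required. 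The only real content is the classical Koebe distortion theorem; if one wanted a self-contained proof, the main obstacle would be establishing those classical bounds — via the area theorem, Bieberbach's inequality, and integration of the resulting differential inequality for $\log F'$ along radii — but this is entirely standard and independent of the GDMS setting, so I would keep it terse and cite \cite{MR4887995} for the precise formulation.
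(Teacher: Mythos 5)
Your proposal is correct. The paper does not prove this proposition at all — it is quoted verbatim from \cite{MR4887995} (Theorem 2.7) as a known form of the Koebe distortion theorem — and your reduction via the affine normalisation $F(\zeta)=(g(z+r\zeta)-g(z))/(r\,g'(z))$ to the classical Schlicht-class distortion bounds, giving the explicit choice $\bsk(t)=\bigl((1+t)/(1-t)\bigr)^{4}$, is exactly the standard argument underlying the cited result, so nothing further is needed.
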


\begin{prp}[Theorem 2.8, \cite{MR4887995}]\label{ekoebe2}
Suppose that $U \subset \C$ is an open set, $z \in U$, and $g \colon U \to \C$ is an analytic map which has an analytic inverse $g_{z}^{-1}$ defined on $D(g(z), 2 R)$ for some $R>0$. 
Then for every $0 \leq r \leq R$,
\[
D\left(z, \bsK^{-1} r\left|g^{\prime}(z)\right|^{-1}\right)
\subset g_{z}^{-1}\left(D(g(z), r)\right)
\subset D\left(z, \bsK r\left|g^{\prime}(z)\right|^{-1}\right).
\]
Here, we put $\bsK := \bsk(1/2)$.
\end{prp}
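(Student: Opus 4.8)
The plan is to reduce the asserted two-sided inclusion to Proposition~\ref{ekoebe1} together with the classical Koebe one-quarter theorem. First I would set $h:=g_{z}^{-1}$ and $w_{0}:=g(z)$, so that $g\circ h=\id$ on $D(w_{0},2R)$. This identity forces $h$ to be injective there, forces $g'(z)\neq0$ (so that $|g'(z)|^{-1}$ is meaningful), and, differentiating at $w_{0}$, gives $h(w_{0})=z$ and $h'(w_{0})=g'(z)^{-1}$. After these preliminaries I would establish the outer and the inner inclusions separately.

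For the outer inclusion $g_{z}^{-1}(D(w_{0},r))\subset D(z,\bsK\,r\,|g'(z)|^{-1})$ I would apply Proposition~\ref{ekoebe1} to the univalent map $h$ on the disc $D(w_{0},2R)$ with $t=\tfrac12$. Since $w_{0}\in D(w_{0},R)$ this gives $\sup_{w\in D(w_{0},R)}|h'(w)|\le\bsk(\tfrac12)\inf_{w\in D(w_{0},R)}|h'(w)|\le\bsk(\tfrac12)\,|h'(w_{0})|=\bsK\,|g'(z)|^{-1}$. Then, for $0\le r\le R$ and $w\in D(w_{0},r)$, the segment $[w_{0},w]$ lies in $D(w_{0},R)$, so integrating $h'$ along it yields $|h(w)-z|\le|w-w_{0}|\cdot\sup_{D(w_{0},R)}|h'|<\bsK\,r\,|g'(z)|^{-1}$. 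This step is routine.

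For the inner inclusion I would pass to the unit disc. Assuming $r>0$ and using $r\le R<2R$, the function $\psi(\zeta):=\bigl(h(w_{0}+r\zeta)-z\bigr)/\bigl(r\,h'(w_{0})\bigr)$ is univalent on $D(0,1)$ with $\psi(0)=0$ and $\psi'(0)=1$, so the Koebe one-quarter theorem gives $\psi(D(0,1))\supset D(0,\tfrac14)$; undoing the affine substitution yields $h(D(w_{0},r))\supset D\bigl(z,\tfrac14\,r\,|g'(z)|^{-1}\bigr)$. It remains to note that any admissible choice of $\bsk$ in Proposition~\ref{ekoebe1} satisfies $\bsk(\tfrac12)\ge4$ — indeed, for the Koebe function transplanted to a disc the supremum-to-infimum ratio in Proposition~\ref{ekoebe1} equals $81$ at $t=\tfrac12$ — so $\bsK^{-1}\le\tfrac14$ and we obtain $D(z,\bsK^{-1}r\,|g'(z)|^{-1})\subset g_{z}^{-1}(D(w_{0},r))$.

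I do not expect a genuine obstacle: the content splits cleanly into a distortion estimate (the outer inclusion, from Proposition~\ref{ekoebe1}) and a covering estimate (the inner inclusion, from the one-quarter theorem), and the two are compatible precisely because $\bsK=\bsk(\tfrac12)\ge4$. The only mildly delicate points are the opening remark — that ``an analytic inverse'' is automatically injective and forces $g'(z)\neq0$ — and the constant comparison $\bsK^{-1}\le\tfrac14$ needed to align the one-quarter bound with the stated bound. Alternatively, since this proposition is quoted from the literature, one may simply invoke \cite[Theorem~2.8]{MR4887995} verbatim.
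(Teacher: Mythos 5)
Your proof is correct. The paper itself offers no argument for this proposition: it is imported verbatim from \cite[Theorem 2.8]{MR4887995}, so any derivation you give is necessarily a different route, and yours is a clean, self-contained one. The reduction splits exactly as it should: the outer inclusion follows from Proposition~\ref{ekoebe1} applied to $h:=g_{z}^{-1}$ on $D(g(z),2R)$ with $t=\tfrac12$ plus integration of $h'$ along a segment (using convexity of $D(g(z),r)$ and $r\le R$), and the inner inclusion follows from the Koebe one-quarter theorem applied to the normalised map $\psi$. The only point that needed care is the constant comparison, and you handle it correctly: since Proposition~\ref{ekoebe1} must hold for the Koebe function, whose derivative has supremum-to-infimum ratio $\bigl(\tfrac{3/2}{(1/2)^{3}}\bigr)\big/\bigl(\tfrac{1/2}{(3/2)^{3}}\bigr)=81$ on the disc of half the radius, any admissible $\bsk(\tfrac12)$ satisfies $\bsK=\bsk(\tfrac12)\ge 81>4$, so $\bsK^{-1}<\tfrac14$ and the one-quarter bound implies the stated inner inclusion. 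Your opening remarks ($h$ injective, $g'(z)\neq 0$, $h'(g(z))=g'(z)^{-1}$ from differentiating $g\circ h=\id$) and the trivial $r=0$ case are also in order. What your approach buys is independence from the precise normalisation of the constant in the cited source; what the paper's citation buys is brevity and the exact constant $\bsK=\bsk(1/2)$ as used elsewhere in the text.
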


For a dynamical system $(Y, T)$, let $\mathcal{M}_{<\infty}(Y)$ denote the space of finite Borel measures on $Y$, let $\mathcal{M}(Y)$ the space of Borel probability measures on $Y$ and $\mathcal{M}^{T}(Y)$ the set of $T$-invariant Borel probability measures on $Y$.

For the rest of this section, we assume for simplicity that a rational GDMS $S$ is finitely generated and irreducible and that the map $\tilde{f}$ is expanding along fibres. 
Let $C(J(\tilde{f}))$ be the Banach space of continuous real-valued functions on $J(\tilde{f})$ with the supremum norm $| \cdot |_{\infty}$.

\begin{dfn}\label{EN2fuvjEs1}
Let $\phi \colon J(\tilde{f}) \to \R$ be a continuous function. 
The \textit{Perron-Frobenius operator} $\mathcal{L}_{\phi} \colon C(J(\tilde{f})) \circlearrowleft$ for $(\tilde{f}, \phi)$ is defined by
\begin{equation}\label{oQDCOWrEuC}
\mathcal{L}_{\phi} \psi(\tilde{y}) := \sum_{\tilde{z} \in \tilde{f}^{-1}(\tilde{y})} e^{\phi(\tilde{z})}\psi(\tilde{z})
\end{equation}
for any $\psi \in C(J(\tilde{f}))$ and $\tilde{y} \in J(\tilde{f})$.
\end{dfn}

Note that $\tilde{f}|_{J(\tilde{f})}$ is an open, distance-expanding map.
For each potential $\phi \in C(J(\tilde{f}))$, the positive linear operator
$\mathcal{L}_{\phi}$ is well-defined and bounded; see \cite[Lemma 13.6.1]{MR4486444}.
The dual operator $\mathcal{L}_{\phi}^{*} \colon \mathcal{M}_{<\infty}(J(\tilde{f})) \to \mathcal{M}_{<\infty}(J(\tilde{f}))$, given by $\int \psi  \mathrm{d}(\mathcal{L}_{\phi}^{*} m) := \int (\mathcal{L}_{\phi} \psi)  \mathrm{d}m$
is well-defined; see \cite[Theorem 6.2 and 6.3]{MR648108}. 
The next proposition follows from \cite[Theorem 13.6.2, Proposition 13.6.14, Lemma 13.6.15, Proposition 13.6.16]{MR4486444}.
\begin{prp}\label{qQrOdN2Nmt}
For each $u \in \R$, there exist $\mathfrak{c} > 0$ and $\tilde{\nu}_{u} \in \mathcal{M}_{<\infty}(J(\tilde{f}))$ such that $\mathcal{L}_{\tilde{\varphi}_{u}}^{*}\tilde{\nu}_{u} =\mathfrak{c}\tilde{\nu}_{u}$.
If, in particular, the map $\tilde{f}$ is topologically transitive on $J(\tilde{f})$, then there exists a unique zero $\delta := \delta(S)$ of the topological pressure function $\mathcal{P}$ and we have $\mathfrak{c} = e^{\mathcal{P} (u)}$.
\end{prp}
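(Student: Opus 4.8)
The plan is to invoke the thermodynamic-formalism machinery for open, distance-expanding maps, which applies here because $\tilde{f}|_{J(\tilde{f})}$ is such a map (as noted after Definition \ref{EN2fuvjEs1}, citing \cite[Lemma 13.6.1]{MR4486444}). First I would fix $u \in \R$ and observe that the potential $\tilde{\varphi}_{u} = u\tilde{\varphi}$ is continuous on the compact set $J(\tilde{f})$; indeed $\tilde{\varphi}(\xi,z) = -\log|g_{\xi,1}'(z)|$ is continuous and bounded on $J(\tilde{f})$ precisely because the system is expanding along fibres, so $|g_{\xi,1}'(z)|$ is bounded below by a positive constant and above by a finite constant on $J(\tilde{f})$, and moreover $J(\tilde{f}) \subset X(S) \times \C$ avoids $\infty$ by \eqref{sKJ4fORxv7}. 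Hence the Perron--Frobenius operator $\mathcal{L}_{\tilde{\varphi}_{u}}$ is a well-defined bounded positive operator on $C(J(\tilde{f}))$, and its dual $\mathcal{L}_{\tilde{\varphi}_{u}}^{*}$ acts on $\mathcal{M}_{<\infty}(J(\tilde{f}))$.

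The existence of an eigenmeasure $\tilde{\nu}_{u}$ with $\mathcal{L}_{\tilde{\varphi}_{u}}^{*}\tilde{\nu}_{u} = \mathfrak{c}\,\tilde{\nu}_{u}$ for some $\mathfrak{c} > 0$ is a standard Schauder--Tychonoff fixed-point argument applied to the normalised dual action $m \mapsto \mathcal{L}_{\tilde{\varphi}_{u}}^{*}m / (\mathcal{L}_{\tilde{\varphi}_{u}}^{*}m)(J(\tilde{f}))$ on the convex weak-$*$ compact set $\mathcal{M}(J(\tilde{f}))$; this is exactly \cite[Theorem 13.6.2]{MR4486444}, so I would simply cite it. The eigenvalue is then $\mathfrak{c} = (\mathcal{L}_{\tilde{\varphi}_{u}}^{*}\tilde{\nu}_{u})(J(\tilde{f})) = \int \mathcal{L}_{\tilde{\varphi}_{u}}\mathbf{1}\,\mathrm{d}\tilde{\nu}_{u} > 0$.

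For the second assertion, under topological transitivity of $\tilde{f}$ on $J(\tilde{f})$ one has the sharper conclusions of \cite[Proposition 13.6.14, Lemma 13.6.15, Proposition 13.6.16]{MR4486444}: the eigenmeasure is unique, and the eigenvalue is identified with the exponential of the topological pressure, $\mathfrak{c} = e^{\Press(\tilde{f}|_{J(\tilde{f})}, \tilde{\varphi}_{u})} = e^{\mathcal{P}(u)}$. Granting this, the map $u \mapsto \mathcal{P}(u) = \log \mathfrak{c}(u)$ inherits the usual properties of a pressure function — it is convex, hence continuous on the interior of its (here, all of $\R$) domain of finiteness, strictly decreasing in $u$ on the relevant range because $\tilde{\varphi} < 0$ uniformly on $J(\tilde{f})$ (by expansion along fibres, $\sup_{J(\tilde{f})}\tilde{\varphi} = -\log \inf |g_{\xi,1}'| < 0$), with $\mathcal{P}(0) = h_{\mathrm{top}}(\tilde{f}) > 0$ and $\mathcal{P}(u) \to -\infty$ as $u \to \infty$. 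The intermediate value theorem then yields a unique zero $\delta = \delta(S)$. I would either cite \cite{arimitsu2024} for the monotonicity and limiting behaviour, or spell out the one-line estimates; both are routine.

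The only mild subtlety — and the one place I would be careful rather than merely citing — is verifying that the hypotheses of the quoted results from \cite{MR4486444} are genuinely met in the GDMS setting: compactness of $J(\tilde{f})$ (which uses finite generation so that $X(S)$ is compact metrisable, together with closedness of $J(\tilde{f})$ in $X(S) \times \C$ and the containment $J(\tilde{f}) \subset X(S) \times \C$ from \eqref{sKJ4fORxv7}), and the open distance-expanding property of $\tilde{f}|_{J(\tilde{f})}$ together with its topological exactness/transitivity needed for the uniqueness statement. These points are all available from the earlier propositions — in particular Proposition \ref{wPJN4oayEL} identifies $J(\tilde{f}) = \bigcup_{\xi}\{\xi\}\times J_{\xi}$ — and from \cite{arimitsu2024}, so assembling them is the main bookkeeping task. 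No deeper obstacle is expected.
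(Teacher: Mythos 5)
Your proposal matches the paper's own treatment: the paper proves this proposition purely by citing \cite[Theorem 13.6.2, Proposition 13.6.14, Lemma 13.6.15, Proposition 13.6.16]{MR4486444}, exactly the results you invoke, and defers the existence and uniqueness of the zero $\delta$ of $\mathcal{P}$ to \cite{arimitsu2024}, as you also suggest. Your additional verifications (boundedness of $\tilde{\varphi}$ via expansion along fibres, compactness of $J(\tilde{f})$, the monotonicity/IVT argument for $\delta$) are correct and simply make explicit what the paper leaves to the cited references.
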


The following proposition is a special case of \cite[Corollary 13.6.11]{MR4486444}.
\begin{prp}\label{tg3o75180I5}
Suppose that the map $\tilde{f}$ is topologically transitive on $J(\tilde{f})$. 
Let $n \in \N$ and let $A$ be a Borel subset of $J(\tilde{f})$ such that
$\tilde{f}^{n}|_{A}$ is injective. 
Then, we have 
\[
\tilde{\nu}_{\delta}(\tilde{f}^{n}(A))
= \int_{A} \exp({-\calS_{n}\tilde{\varphi}_{\delta}(\tilde{z})})\mathrm{d}\tilde{\nu}_{\delta}(\tilde{z})
= \int_{A} |(\tilde{f}^{n})^{\prime}(\tilde{z})|^{\delta}\mathrm{d}\tilde{\nu}_{\delta}(\tilde{z}).
\]
\end{prp}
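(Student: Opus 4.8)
The plan is to derive both equalities from the conformality of the eigenmeasure $\tilde{\nu}_{\delta}$, which is essentially the content of \cite[Corollary 13.6.11]{MR4486444} once its hypotheses are verified in the present setting, followed by a bookkeeping computation. First I would pin down the eigenvalue: by Proposition \ref{qQrOdN2Nmt} applied with $u = \delta$ there exist $\mathfrak{c} > 0$ and $\tilde{\nu}_{\delta} \in \mathcal{M}_{<\infty}(J(\tilde{f}))$ with $\mathcal{L}_{\tilde{\varphi}_{\delta}}^{*}\tilde{\nu}_{\delta} = \mathfrak{c}\,\tilde{\nu}_{\delta}$, and, since $\tilde{f}$ is topologically transitive on $J(\tilde{f})$, that proposition also gives $\mathfrak{c} = e^{\mathcal{P}(\delta)} = 1$ because $\delta$ is the zero of $\mathcal{P}$. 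The $n$-th iterate of $\mathcal{L}_{\tilde{\varphi}_{\delta}}$ is the Perron-Frobenius operator for the pair $(\tilde{f}^{n}, \calS_{n}\tilde{\varphi}_{\delta})$ --- expand \eqref{oQDCOWrEuC} $n$ times --- so its dual again fixes $\tilde{\nu}_{\delta}$; that is,
\[
\int \mathcal{L}_{\calS_{n}\tilde{\varphi}_{\delta}}\psi \,\mathrm{d}\tilde{\nu}_{\delta} = \int \psi \,\mathrm{d}\tilde{\nu}_{\delta}
\qquad \text{for all } \psi \in C(J(\tilde{f})),
\]
where $\mathcal{L}_{\calS_{n}\tilde{\varphi}_{\delta}}$ is formed with $\tilde{f}^{n}$ in place of $\tilde{f}$.

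Next I would feed this into the conformality statement. The map $\tilde{f}|_{J(\tilde{f})}$ is open and distance-expanding, and taking $n = 1$ in \eqref{3co5RIbc1p} gives $\inf_{\tilde{z}\in J(\tilde{f})}|\tilde{f}'(\tilde{z})| \geq c\lambda > 0$, so $\tilde{\varphi}_{\delta} = -\delta\log|\tilde{f}'|$ is bounded and locally Lipschitz, in particular H\"older, on $J(\tilde{f})$; together with the eigenmeasure $\tilde{\nu}_{\delta}$ of eigenvalue $1$ obtained above, the hypotheses of \cite[Corollary 13.6.11]{MR4486444} are met, and that corollary applied to the Borel set $A$ with $\tilde{f}^{n}|_{A}$ injective yields
\[
\tilde{\nu}_{\delta}(\tilde{f}^{n}(A)) = \int_{A}\exp\bigl(-\calS_{n}\tilde{\varphi}_{\delta}(\tilde{z})\bigr)\,\mathrm{d}\tilde{\nu}_{\delta}(\tilde{z}).
\]
For a self-contained argument one would first establish this when $A$ is a relatively open subset of a Bowen ball on which $\tilde{f}^{n}$ restricts to a homeomorphism onto its image --- pull a Urysohn approximation of $\chi_{\tilde{f}^{n}(A)}$ back through the corresponding inverse branch, test the displayed identity against it, and invoke regularity of the finite Borel measure $\tilde{\nu}_{\delta}$ --- and then decompose an arbitrary Borel set $A$ with $\tilde{f}^{n}|_{A}$ injective into countably many such pieces, extend by the monotone class theorem, and let the scale $\varepsilon \to 0$ so that the distortion of $\calS_{n}\tilde{\varphi}_{\delta}$ over Bowen balls tends to $1$, using H\"older continuity and uniform expansion.

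The remaining equality is pure bookkeeping: from $\tilde{\varphi}(\tilde{z}) = -\log|\tilde{f}'(\tilde{z})|$, the chain rule, and the convention $(\tilde{f}^{n})'(\xi, z) := (g_{\xi|n})'(z)$ fixed earlier,
\[
\calS_{n}\tilde{\varphi}_{\delta}(\tilde{z}) = -\delta\sum_{k=0}^{n-1}\log\bigl|\tilde{f}'(\tilde{f}^{k}(\tilde{z}))\bigr| = -\delta\log\bigl|(\tilde{f}^{n})'(\tilde{z})\bigr|,
\]
so $\exp(-\calS_{n}\tilde{\varphi}_{\delta}(\tilde{z})) = |(\tilde{f}^{n})'(\tilde{z})|^{\delta}$ and the two displayed integrals coincide. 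I expect the genuine obstacle to lie in the middle step: continuous test functions cannot directly detect the indicator of an arbitrary Borel set, so the passage from the eigen-identity to the conformality formula on $A$ really does require the local-homeomorphism structure --- with uniformly bounded branching --- of the open distance-expanding map $\tilde{f}^{n}$, outer regularity of $\tilde{\nu}_{\delta}$, and uniform control of the distortion of $\calS_{n}\tilde{\varphi}_{\delta}$ over Bowen balls. All of this is packaged in \cite[Chapter 13]{MR4486444}, so in practice the work reduces to the verifications indicated above.
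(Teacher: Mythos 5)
Your proposal is correct and follows essentially the same route as the paper, which offers no proof beyond declaring the proposition a special case of \cite[Corollary 13.6.11]{MR4486444}. Your additional verifications --- that $\mathfrak{c}=e^{\mathcal{P}(\delta)}=1$ by Proposition \ref{qQrOdN2Nmt} since $\delta$ is the zero of $\mathcal{P}$, and that $\exp(-\calS_{n}\tilde{\varphi}_{\delta})=|(\tilde{f}^{n})'|^{\delta}$ by the chain rule --- are exactly the checks implicit in that citation.
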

In particular, for $n=1$,the measure $\tilde{\nu}_{\delta}$ is $e^{-\tilde{\varphi}_{\delta}}$-conformal. 

\section{Topological entropy of rational skew product maps}
The objective of this section is to prove Main Theorem \ref{GQc0T8Xqyu}.
We make the following standing assumption throughout this section.
\begin{SA*} 
Let $S=(V, E, (\Gamma_{e})_{e \in E})$ be a finitely generated, irreducible rational GDMS.
Assume that the associated skew product $\tilde{f}$ is expanding along fibres.
\end{SA*}
Under this assumption, the map $\tilde{f}|_{J(\tilde{f})}$ is an open distance-expanding map and surjective. 
Instead of repeating the argument developed in Sections 4 and 5 of \cite{MR1767945}, we utilise results on open, distance-expanding dynamical systems given in \cite{MR2656475}.

For $a=(a_{\alpha})_{\alpha \in I} \in \mathcal{W}^{+}(S)$, we define a continuous function $\phi_{a} \colon J(\tilde{f})\to\R$ by
\begin{equation*}
\phi_{a}(\xi, z)
:=
\log a_{\xi_{1}}-\log \deg(g_{\xi_{1}})
\end{equation*}
for $(\xi, z) \in J(\tilde{f})$. 
For simplicity, we write $\mathcal{L}_{a} := \mathcal{L}_{\phi_{a}}$. 
Note that for $\varphi \in C(J(\tilde{f}))$, we have 
\[
(\mathcal{L}_{\phi_{a}}\varphi)(\xi, z)
= 
\sum_{\alpha \in I^{(\bi(\xi))}}
\frac{a_{\alpha}}{\deg(g_{\alpha})}
\sum_{w\in g_{\alpha}^{-1}(z)}
\varphi(\alpha\xi,w), 
\]
where $(\xi, z) \in J(\tilde{f})$.
Since $\mathcal{L}_{a}1=1$, the dual operator $\mathcal{L}_{a}^{*}$ sends $\mathcal M(J(\tilde{f}))$ into itself.
The following proposition follows similarly to Proposition \ref{qQrOdN2Nmt}. 
\begin{prp}\label{6QlyKESWkv}
Let $a \in \mathcal{W}^{+}(S)$.
Then, there exists a probability measure $\tilde{\mu}_{a}\in \mathcal{M}(J(\tilde{f}))$ such that
$\mathcal{L}_{a}^{*}\tilde{\mu}_{a}=\tilde{\mu}_{a}$.
Moreover, $\tilde{\mu}_{a}$ is $\tilde{f}$-invariant.
\end{prp}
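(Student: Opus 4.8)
The plan is to prove Proposition~\ref{6QlyKESWkv} by mimicking the standard Ruelle--Perron--Frobenius argument exactly as it underlies Proposition~\ref{qQrOdN2Nmt}, specialised to the normalised potential $\phi_a$. Since $S$ is finitely generated and irreducible with $\tilde f$ expanding along fibres, the map $\tilde f|_{J(\tilde f)}$ is an open, distance-expanding, surjective map on a compact metric space (as noted in the standing assumption discussion and in \cite{MR4486444}), so the operator $\mathcal L_a = \mathcal L_{\phi_a}$ is a well-defined bounded positive operator on $C(J(\tilde f))$ by \cite[Lemma 13.6.1]{MR4486444}, and its dual $\mathcal L_a^{*}$ acts on $\mathcal M_{<\infty}(J(\tilde f))$.

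\textbf{Existence of a fixed probability measure.} First I would observe that because $a\in\mathcal W^{+}(S)$ is a vertex-wise weight, we have $\mathcal L_a \mathbf 1 = \mathbf 1$: indeed, using the formula
\[
(\mathcal L_{\phi_a}\varphi)(\xi,z)=\sum_{\alpha\in I^{(\bi(\xi))}}\frac{a_\alpha}{\deg(g_\alpha)}\sum_{w\in g_\alpha^{-1}(z)}\varphi(\alpha\xi,w),
\]
with $\varphi\equiv 1$ and $\#g_\alpha^{-1}(z)=\deg(g_\alpha)$ (counted without multiplicity for $z$ outside the finitely many critical values, and by the standard convention with multiplicity in general — this is where one must be mildly careful, cf.\ the hyperbolicity of $S$ which keeps critical values away from $J(S)$), one gets $\sum_{\alpha\in I^{(\bi(\xi))}}a_\alpha=1$. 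Hence $\mathcal L_a^{*}$ maps the weak-$*$ compact convex set $\mathcal M(J(\tilde f))$ continuously into itself, and the Schauder--Tychonoff fixed point theorem yields $\tilde\mu_a\in\mathcal M(J(\tilde f))$ with $\mathcal L_a^{*}\tilde\mu_a=\tilde\mu_a$. (Alternatively, invoke Proposition~\ref{qQrOdN2Nmt} with $u$ replaced by the potential $\phi_a$ directly, noting $\mathcal L_a\mathbf 1=\mathbf 1$ forces the eigenvalue $\mathfrak c=1$; this is the ``follows similarly'' the authors refer to.)

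\textbf{$\tilde f$-invariance.} For the invariance, I would use the defining relation of the dual operator together with $\mathcal L_a\mathbf 1=\mathbf 1$. For any $\psi\in C(J(\tilde f))$,
\[
\int \psi\circ\tilde f\,\mathrm d\tilde\mu_a
=\int \psi\circ\tilde f\,\mathrm d(\mathcal L_a^{*}\tilde\mu_a)
=\int \mathcal L_a(\psi\circ\tilde f)\,\mathrm d\tilde\mu_a.
\]
Now $\mathcal L_a(\psi\circ\tilde f)(\tilde y)=\sum_{\tilde z\in\tilde f^{-1}(\tilde y)}e^{\phi_a(\tilde z)}\psi(\tilde f(\tilde z))=\psi(\tilde y)\sum_{\tilde z\in\tilde f^{-1}(\tilde y)}e^{\phi_a(\tilde z)}=\psi(\tilde y)\,(\mathcal L_a\mathbf 1)(\tilde y)=\psi(\tilde y)$, so $\int\psi\circ\tilde f\,\mathrm d\tilde\mu_a=\int\psi\,\mathrm d\tilde\mu_a$, which gives $\tilde f_{*}\tilde\mu_a=\tilde\mu_a$.

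\textbf{Main obstacle.} The only genuinely delicate point is the bookkeeping of preimage multiplicities when verifying $\mathcal L_a\mathbf 1=\mathbf 1$: the sum $\sum_{\tilde z\in\tilde f^{-1}(\tilde y)}e^{\phi_a(\tilde z)}$ must be interpreted with multiplicity (as in \eqref{oQDCOWrEuC}), and $\#g_\alpha^{-1}(z)=\deg(g_\alpha)$ holds with multiplicity for every $z$, so the identity is in fact exact everywhere on $J(\tilde f)$ and no hyperbolicity is even needed for this algebraic step — but one should state the multiplicity convention clearly to avoid confusion. Everything else is a verbatim transcription of the Ruelle operator argument, so I expect the write-up to be short.
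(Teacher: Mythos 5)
Your proposal is correct and follows essentially the same route the paper intends: the paper gives no explicit proof but records precisely your two key ingredients, namely $\mathcal{L}_{a}\mathbf{1}=\mathbf{1}$ (so that $\mathcal{L}_{a}^{*}$ preserves $\mathcal{M}(J(\tilde{f}))$, yielding a fixed point by Schauder--Tychonoff, exactly as in Proposition~\ref{qQrOdN2Nmt}) and the duality computation $\mathcal{L}_{a}(\psi\circ\tilde{f})=\psi\cdot\mathcal{L}_{a}\mathbf{1}=\psi$ for $\tilde{f}$-invariance. Your multiplicity worry is already settled by Remark~\ref{l0jJwi7MjQ}: expansion along fibres forces $(\tilde{f})'\neq 0$ on $J(\tilde{f})$ and, together with backward invariance, gives $\#\tilde{f}^{-1}(\tilde{y})=\sum_{\alpha\in I^{(\bi(\xi))}}\deg(g_{\alpha})$ exactly, so no multiplicity convention is needed.
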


We call $\tilde{\mu}_{a}$ the \textit{stationary measure} associated with $a$.
For $a \in \mathcal{W}^{+}(S)$, we define the measure
\[
\nu_{a}:=(\pi_{1})_*\tilde\mu_a 
\]
on $X(S)$. 
Note that the measure $\nu_{a}$ is $\sigma$-invariant.
For each vertex $i \in V$, we define
\[
c_{i} := \nu_{a}(X_{i}(S))
\]
and have  $\sum_{i \in V}c_{i}=1$.
Moreover, one readily sees that for each $(\alpha_{1},\ldots,\alpha_n) \in X^{n}(S)$ that  $\nu_{a}([\alpha_{1},\ldots,\alpha_n])
= c_{\bt(\alpha_n)} \prod_{k=1}^n a_{\alpha_k}$ by using Proposition \ref{6QlyKESWkv}.

If $\mathfrak{P}_{1}$ and $\mathfrak{P}_{2}$ are two partitions of $J(\tilde{f})$, we define their join, $\mathfrak{P}_{1} \vee \mathfrak{P}_{2} :=\{A \cap B: A \in \mathfrak{P}_{1}, B \in \mathfrak{P}_{2}\}$ 
and  $\mathfrak{P}^{-}:=\bigvee_{n=1}^{\infty} \tilde{f}^{-n}(\mathfrak{P})$.

\begin{rmk}\label{l0jJwi7MjQ}
Note that 
$\#\tilde{f}^{-1}(\tilde{y}) = \sum_{\alpha \in I^{(\bi(\xi))}} \deg(g_{\alpha})
<\infty$ for every $\tilde{y}=(\xi, z) \in J(\tilde{f})$.
Moreover, as the restriction $\tilde{f}|_{J(\tilde{f})}$ is an open distance-expanding map, one can choose a finite Borel partition of sufficiently small diameter which is a one-sided generator of finite entropy on each ergodic component; see \cite[Lemma 3.5.5, Theorem 4.1.1]{MR2656475}. 
\end{rmk}

\begin{dfn}
Let $\mu$ be a Borel probability measure on $J(\tilde{f})$. 
Let $\tilde{\varepsilon}$ denote the point partition of $J(\tilde{f})$.
Let $(\mu_{\tilde y})_{\tilde y\in J(\tilde{f})}$ be probability measures with respect to $\tilde{f}^{-1}(\tilde\varepsilon)$ 
such that  $\mu_{\tilde y}$ is supported on $\tilde{f}^{-1}(\{\tilde y\})$ for $\mu$-a.e.  $\tilde y$ and
\[
\mu(A)=\int_{J(\tilde{f})} \mu_{\tilde y}(A) \mathrm{d}\mu(\tilde y)
\]
for a Borel set  $A\subset J(\tilde{f})$; see \cite[Theorems 2.6.7 and 2.6.11]{MR2656475}.
We define the \textit{conditional information function} of a partition $\tilde{\varepsilon}$ of $J(\tilde{f})$ given $\tilde{f}^{-1}(\tilde{\varepsilon})$ by
$I_{\mu}(\tilde{\varepsilon}\mid \tilde{f}^{-1}(\tilde{\varepsilon}))(\tilde{z}) := -\log \mu_{\tilde{f}(\tilde{z})}(\{\tilde{z}\})$ for $\mu$-a.e. $\tilde{z} \in J(\tilde{f})$, 
and the \textit{conditional entropy} by
$H_\mu(\tilde{\varepsilon}\mid \tilde{f}^{-1}(\tilde{\varepsilon})) := \int_{J(\tilde{f})} I_{\mu}(\tilde{\varepsilon}\mid \tilde{f}^{-1}(\tilde{\varepsilon}))(\tilde{z}) \mathrm{d}\mu(\tilde{z})$; see \cite[Definition 2.8.3]{MR2656475}.
\end{dfn}

\begin{lem}\label{TbxwSFQbox2}
Let $a := (a_{\alpha})_{\alpha \in I} \in \mathcal{W}^{+}(S)$.
Then, for $\tilde\mu_a$-a.e.  $\tilde{z}\in J(\tilde{f})$ and every $\tilde y=(\eta,w)\in \tilde{f}^{-1}(\tilde{z})$,
\[
\tilde\mu_{a,\tilde{z}}(\{\tilde y\})
=
\frac{a_{\eta_{1}}}{\deg(g_{\eta_{1}})},
\]
where $(\tilde\mu_{a,\tilde{z}})_{\tilde{z}}$ denotes the Rokhlin conditional measures
with respect to the partition $\tilde{f}^{-1}(\tilde\varepsilon)$.
\end{lem}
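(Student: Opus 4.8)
The plan is to exhibit an explicit candidate for the Rokhlin system of conditional measures of $\tilde{\mu}_{a}$ with respect to the partition $\tilde{f}^{-1}(\tilde{\varepsilon})$ and then identify it with $(\tilde{\mu}_{a,\tilde{z}})_{\tilde{z}}$ by essential uniqueness. Recall $\phi_{a}(\eta,w)=\log a_{\eta_{1}}-\log\deg(g_{\eta_{1}})$, so that $e^{\phi_{a}(\tilde{y})}=a_{\eta_{1}}/\deg(g_{\eta_{1}})$ for $\tilde{y}=(\eta,w)\in J(\tilde{f})$. For $\tilde{z}\in J(\tilde{f})$ I would define a finite Borel measure $\mu^{\circ}_{\tilde{z}}$ on $J(\tilde{f})$ by
\[
\int_{J(\tilde{f})}\psi\,\mathrm{d}\mu^{\circ}_{\tilde{z}}:=\mathcal{L}_{a}\psi(\tilde{z})=\sum_{\tilde{y}\in\tilde{f}^{-1}(\{\tilde{z}\})}e^{\phi_{a}(\tilde{y})}\psi(\tilde{y}),\qquad\psi\in C(J(\tilde{f})).
\]
By Remark \ref{l0jJwi7MjQ} the fibre $\tilde{f}^{-1}(\{\tilde{z}\})$ is finite, so $\mu^{\circ}_{\tilde{z}}$ is a nonnegative combination of unit point masses carried by $\tilde{f}^{-1}(\{\tilde{z}\})$; since $\mathcal{L}_{a}1=1$, its total mass is $1$, whence $\mu^{\circ}_{\tilde{z}}\in\mathcal{M}(J(\tilde{f}))$, and directly from the definition $\mu^{\circ}_{\tilde{z}}(\{\tilde{y}\})=e^{\phi_{a}(\tilde{y})}=a_{\eta_{1}}/\deg(g_{\eta_{1}})$ for each $\tilde{y}=(\eta,w)\in\tilde{f}^{-1}(\{\tilde{z}\})$.

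The next step is to check that $(\mu^{\circ}_{\tilde{z}})_{\tilde{z}\in J(\tilde{f})}$ is a disintegration of $\tilde{\mu}_{a}$ over $\tilde{f}^{-1}(\tilde{\varepsilon})$. The map $\tilde{z}\mapsto\int\psi\,\mathrm{d}\mu^{\circ}_{\tilde{z}}=\mathcal{L}_{a}\psi(\tilde{z})$ is continuous, hence Borel, and using $\mathcal{L}_{a}^{*}\tilde{\mu}_{a}=\tilde{\mu}_{a}$ (Proposition \ref{6QlyKESWkv}) we get
\[
\int_{J(\tilde{f})}\Big(\int\psi\,\mathrm{d}\mu^{\circ}_{\tilde{z}}\Big)\mathrm{d}\tilde{\mu}_{a}(\tilde{z})=\int_{J(\tilde{f})}\mathcal{L}_{a}\psi\,\mathrm{d}\tilde{\mu}_{a}=\int_{J(\tilde{f})}\psi\,\mathrm{d}\tilde{\mu}_{a}
\]
for every $\psi\in C(J(\tilde{f}))$; since a finite Borel measure on the compact metric space $J(\tilde{f})$ is determined by its integrals against continuous functions, a standard approximation argument upgrades this to $\tilde{\mu}_{a}(A)=\int\mu^{\circ}_{\tilde{z}}(A)\,\mathrm{d}\tilde{\mu}_{a}(\tilde{z})$ for every Borel $A\subset J(\tilde{f})$. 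Because $\tilde{f}|_{J(\tilde{f})}$ is an open, distance-expanding surjection, $\tilde{f}^{-1}(\tilde{\varepsilon})$ is a measurable partition with finite atoms and the system of conditional measures of $\tilde{\mu}_{a}$ with respect to it is unique up to a $\tilde{\mu}_{a}$-null set; see \cite[Theorems 2.6.7 and 2.6.11]{MR2656475}. Since $\mu^{\circ}_{\tilde{z}}$ is carried by the atom $\tilde{f}^{-1}(\{\tilde{z}\})$ and has the correct marginal, this forces $\tilde{\mu}_{a,\tilde{z}}=\mu^{\circ}_{\tilde{z}}$ for $\tilde{\mu}_{a}$-a.e. $\tilde{z}$, and evaluating at the atom $\{\tilde{y}\}$ with $\tilde{y}=(\eta,w)\in\tilde{f}^{-1}(\{\tilde{z}\})$ yields the asserted identity.

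The step I expect to require the most care is the appeal to essential uniqueness: one must make sure the construction above identifies $\mu^{\circ}_{\tilde{z}}$ with the conditional measure $\tilde{\mu}_{a,\tilde{z}}$ itself and not merely with some measure reproducing the right marginal. If one prefers to avoid quoting the abstract uniqueness statement, the same conclusion follows by testing against products $(\varphi\circ\tilde{f})\cdot\psi$ with $\varphi,\psi\in C(J(\tilde{f}))$ and using $\mathcal{L}_{a}\big((\varphi\circ\tilde{f})\cdot\psi\big)=\varphi\cdot\mathcal{L}_{a}\psi$ together with $\mathcal{L}_{a}^{*}\tilde{\mu}_{a}=\tilde{\mu}_{a}$, which gives $\int\psi\,\mathrm{d}\tilde{\mu}_{a,\tilde{z}}=\int\psi\,\mathrm{d}\mu^{\circ}_{\tilde{z}}$ for $\tilde{\mu}_{a}$-a.e.\ $\tilde{z}$ and, letting $\psi$ range over a countable dense subset of $C(J(\tilde{f}))$, that $\tilde{\mu}_{a,\tilde{z}}=\mu^{\circ}_{\tilde{z}}$ a.e. It is also worth recording that the identity $\mathcal{L}_{a}1=1$ used above rests on $J(\tilde{f})$ meeting no critical fibre, which holds since $\tilde{f}$ is hyperbolic along fibres by Proposition \ref{jSk7Eo33NCD}.
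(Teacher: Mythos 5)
Your proposal is correct and follows essentially the same route as the paper: both construct the explicit candidate measure $\ell_{\tilde z}=\mu^{\circ}_{\tilde z}$ supported on $\tilde f^{-1}(\{\tilde z\})$ with weights $a_{\eta_1}/\deg(g_{\eta_1})$, verify the disintegration identity from $\mathcal{L}_a^{*}\tilde\mu_a=\tilde\mu_a$, and conclude by the essential uniqueness of the Rokhlin conditional measures for $\tilde f^{-1}(\tilde\varepsilon)$ as in \cite[Theorem 2.6.7]{MR2656475}. Your extra remarks (the alternative verification via test functions of the form $(\varphi\circ\tilde f)\cdot\psi$, and the observation that $\mathcal{L}_a1=1$ relies on fibres of $\tilde f|_{J(\tilde f)}$ avoiding critical points, cf.\ Remark \ref{l0jJwi7MjQ}) are sound but not needed beyond what the paper already records.
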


\begin{proof}
Let $\tilde{z}=(\xi,z)\in J(\tilde{f})$.
Define a probability measure $\ell_{\tilde{z}}$ on $J(\tilde{f})$ supported on $\tilde{f}^{-1}(\tilde{z})$ by
\[
\ell_{\tilde{z}}(B)
:=
\sum_{\tilde y=(\eta,w)\in \tilde{f}^{-1}(\tilde{z})}
\frac{a_{\eta_{1}}}{\deg(g_{\eta_{1}})} \mathbf 1_{B}(\tilde y)
\]
for each Borel set $B \subset J(\tilde{f})$. 
By the definition of $\mathcal{L}_{a}$, we have $(\mathcal{L}_{a}\mathbf 1_B)(\tilde{z})=\ell_{\tilde{z}}(B)$.
Since $\mathcal{L}_{a}^*\tilde\mu_a=\tilde\mu_a$, it follows that
\[
\tilde\mu_a(B)=\int (\mathcal{L}_{a}\mathbf 1_B)(\tilde{z}) d\tilde\mu_a(\tilde{z})
=\int \ell_{\tilde{z}}(B) d\tilde\mu_a(\tilde{z}).
\]
By the uniqueness of Rokhlin disintegration for $\tilde{f}^{-1}(\tilde\varepsilon)$ \cite[Theorems 2.6.7]{MR2656475},
we conclude that $\tilde\mu_{a,\tilde{z}}=\ell_{\tilde{z}}$ for $\tilde\mu_a$-a.e. $\tilde{z}$.
The proof is complete.
\end{proof}

\begin{lem}\label{XmrtH1fPmC}
Let $\tilde{\varepsilon}$ denote the point partition of $J(\tilde{f})$ and let  $a := (a_{\alpha})_{\alpha \in I} \in \mathcal{W}^{+}(S)$. 
Then, we have 
\begin{equation}\label{fDE62Sgc25}
H_{\tilde{\mu}_{a}}(\tilde{\varepsilon}\mid \tilde{f}^{-1}(\tilde{\varepsilon}))
=
\sum_{i \in V} c_{i}
\left(
-\sum_{\alpha \in I^{(i)}}a_{\alpha}\log a_{\alpha}
+
\sum_{\alpha \in I^{(i)}}a_{\alpha}\log\deg(g_{\alpha})
\right).
\end{equation}
\end{lem}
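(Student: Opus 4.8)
The plan is to combine the explicit formula for the Rokhlin conditional measures from Lemma~\ref{TbxwSFQbox2} with the change of variables that pushes $\tilde\mu_a$ forward to $\nu_a$ under $\pi_1$, reducing everything to the cylinder formula $\nu_a([\alpha_1,\dots,\alpha_n])=c_{\bt(\alpha_n)}\prod_{k=1}^n a_{\alpha_k}$ recorded after Proposition~\ref{6QlyKESWkv}.

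First I would unwind the definition of the conditional information function at a typical point. Fix $\tilde z=(\xi,z)\in J(\tilde f)$ with $\xi=(\xi_1,\xi_2,\dots)$. Since $\tilde f(\tilde z)=(\sigma\xi,g_{\xi_1}(z))$ and $\tilde z$ manifestly lies in $\tilde f^{-1}(\tilde f(\tilde z))$, I can apply Lemma~\ref{TbxwSFQbox2} with $\tilde z$ replaced by $\tilde f(\tilde z)$ and $\tilde y=\tilde z$ (so $\eta=\xi$, $w=z$), obtaining $\tilde\mu_{a,\tilde f(\tilde z)}(\{\tilde z\})=a_{\xi_1}/\deg(g_{\xi_1})$ for $\tilde\mu_a$-a.e.\ $\tilde z$. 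Hence
\[
I_{\tilde\mu_a}\bigl(\tilde\varepsilon\mid \tilde f^{-1}(\tilde\varepsilon)\bigr)(\tilde z)
= -\log\frac{a_{\xi_1}}{\deg(g_{\xi_1})}
= -\log a_{\xi_1}+\log\deg(g_{\xi_1})
\]
for $\tilde\mu_a$-a.e.\ $\tilde z$. The right-hand side is bounded (as $I$ is finite and every $a_\alpha>0$) and depends on $\tilde z$ only through the first symbol $\xi_1$ of $\pi_1(\tilde z)$.

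Next I would integrate and push forward. By definition $H_{\tilde\mu_a}(\tilde\varepsilon\mid \tilde f^{-1}(\tilde\varepsilon))=\int I_{\tilde\mu_a}(\tilde\varepsilon\mid \tilde f^{-1}(\tilde\varepsilon))\,\mathrm d\tilde\mu_a$; since the integrand factors through $\pi_1$ and $\nu_a=(\pi_1)_*\tilde\mu_a$, this equals $\int_{X(S)}\bigl(-\log a_{\xi_1}+\log\deg(g_{\xi_1})\bigr)\,\mathrm d\nu_a(\xi)$. The integrand is constant on each cylinder $[\alpha]$, $\alpha\in I$, and the cylinder formula with $n=1$ gives $\nu_a([\alpha])=c_{\bt(\alpha)}a_\alpha$, so
\[
H_{\tilde\mu_a}\bigl(\tilde\varepsilon\mid \tilde f^{-1}(\tilde\varepsilon)\bigr)
= \sum_{\alpha\in I} c_{\bt(\alpha)}\,a_\alpha\bigl(-\log a_\alpha+\log\deg(g_\alpha)\bigr).
\]
Finally I would regroup this sum according to the terminal vertex, using $I=\bigsqcup_{i\in V}I^{(i)}$, which produces exactly~\eqref{fDE62Sgc25}. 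I do not expect a genuine obstacle: once Lemma~\ref{TbxwSFQbox2} is available the argument is bookkeeping. The only points needing a sentence of care are the boundedness/measurability of the integrand (immediate from finiteness of $I$ and positivity of the weights, so the integral converges) and the remark that $\tilde z\in\tilde f^{-1}(\tilde f(\tilde z))$, which is what licenses invoking Lemma~\ref{TbxwSFQbox2} with $\tilde y=\tilde z$.
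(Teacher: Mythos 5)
Your proposal is correct and follows essentially the same route as the paper: both reduce the conditional information function to $-\log a_{\xi_1}+\log\deg(g_{\xi_1})$ via Lemma~\ref{TbxwSFQbox2}, push forward to $\nu_a$, and regroup by terminal vertex. The only cosmetic difference is that you invoke the level-one cylinder formula $\nu_a([\alpha])=c_{\bt(\alpha)}a_\alpha$ directly, whereas the paper derives $\nu_a(\{\bt(\xi_1)=i\})=c_i$ from $\sigma$-invariance and then uses the conditional distribution of $\xi_1$; these encode the same fact.
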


\begin{proof}
By Lemma \ref{TbxwSFQbox2}, for $\tilde{\mu}_{a}$-a.e.  $\tilde{z}=(\xi,z)\in J(\tilde{f})$ we have
\[
\tilde{\mu}_{a, \tilde{f}(\tilde{z})}(\{\tilde{z}\})
=\frac{a_{\xi_{1}}}{\deg(g_{\xi_{1}})}.
\]
Hence, for $\mu$-a.e.  $\tilde{z}=(\xi, z) \in J(\tilde{f})$,
\[
I_{\mu_{a}}(\tilde{\varepsilon}\mid \tilde{f}^{-1}(\tilde{\varepsilon}))(\xi, z)
=
-\log\mu_{\tilde{f}(\tilde{z})}(\{\tilde{z}\})
=
-\log a_{\xi_{1}}+\log\deg(g_{\xi_{1}}).
\]
and thus
\[
H_{\mu_{a}}(\tilde{\varepsilon}\mid \tilde{f}^{-1}(\tilde{\varepsilon}))
=
\int_{X(S)}\left(-\log a_{\xi_{1}} + \log\deg(g_{\xi_{1}})\right) \mathrm{d}\nu_{a}(\xi).
\]
Next, by $\sigma$-invariance of $\nu_{a}$ we have
\[
\nu_{a}(\{\bt(\xi_{1})=i\})
=\nu_{a}(\{\bi(\xi_{2})=i\})
=\nu_{a}(\sigma^{-1}(X_{i}(S)))
=\nu_{a}(X_{i}(S))=c_{i}.
\]
for each $i \in V$. 
Moreover, for each $i \in V$, the conditional distribution of $\xi_{1}$ given $\bt(\xi_{1})=i$ is the probability vector $(a_{\alpha})_{\alpha \in I^{(i)}}$.
Hence,
\[
\int_{\{\bt(\xi_{1})=i\}}\left(-\log a_{\xi_{1}}+\log\deg(g_{\xi_{1}})\right) \mathrm{d}\nu_{a}(\xi)
=
c_{i}\sum_{\alpha \in I^{(i)}} a_{\alpha}\left(-\log a_{\alpha}+\log\deg(g_{\alpha})\right).
\]
Summing over $i \in V$ gives \eqref{fDE62Sgc25}, which completes the proof. 
\end{proof}

\begin{dfn}\label{r8BiPogUOU}
Define the locally constant potential $\psi:X(S)\to\R$ by
$\psi(\xi):=\log\deg(g_{\xi_{1}})$ for $\xi \in X(S)$.
We also put $\tilde\psi:=\psi\circ\pi_{1}$.
Then, for $\tilde{z}=(\xi, z) \in J(\tilde{f})$ we have
$\calS_{n}\tilde\psi(\tilde{z})=\sum_{k=0}^{n-1}\psi(\sigma^{k}\xi)=\log\deg(g_{\xi|n})$.
For $n \in \N$ and $t\in \R$, we put
\begin{equation}\label{8ONjjP0FAw}
Z_n^{\deg}(t):=\sum_{\xi \in X^{n}(S)}\deg(g_{\xi})^t
\end{equation}
For $t\in \R$, we write
\[
\Press (\sigma,t\psi) = \lim_{n \to \infty}\frac{1}{n}\log Z_n^{\deg}(t).
\]
In particular, recalling $N_n:=\sum_{\xi \in X^{n}(S)}\deg(g_{\xi})=Z_n^{\deg}(1)$, we have
\[
\Press (\sigma, \psi) = \lim_{n \to \infty}\frac{1}{n}\log N_n.
\]
\end{dfn}

Let $\tilde{\varepsilon}$ be the point partition of $J(\tilde{f})$,
$\varepsilon_{X(S)}$ the point partition of $X(S)$, and
$\varepsilon_{\rsp}$ the point partition of $\rsp$.
Let $h_{m}(\tilde{f})$ denote the \textit{measure-theoretic entropy} of $\tilde{f}$ with respect to $m \in \mathcal{M}^{\tilde{f}}(J(\tilde{f}))$; see e.g. \cite{MR648108}.

\begin{lem}\label{t0nQ0VqNK1}
Let $\mu\in \mathcal{M}^{\tilde{f}}(J(\tilde{f}))$ and put
$\nu:=(\pi_{1})_*\mu\in \mathcal{M}^{\sigma}(X(S))$.
Then, we have
\begin{equation*}
h_\mu(\tilde{f})\leq h_\nu(\sigma)+\int_{X(S)}\psi \mathrm{d}\nu.
\end{equation*}
Consequently, we obtain $h_{\mathrm{top}}(\tilde{f}) \leq \log\rho(M)$.
\end{lem}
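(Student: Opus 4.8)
The plan is to evaluate $h_\mu(\tilde{f})$ through a Rokhlin-type conditional-entropy formula, split the resulting quantity along $\pi_{1}$ into a contribution from the base $(X(S),\sigma)$ and one from the fibres, bound these by $h_\nu(\sigma)$ and $\int\psi\,\mathrm{d}\nu$ respectively, and finally obtain $h_{\mathrm{top}}(\tilde{f})\le\log\rho(M)$ by a double use of the variational principle. First I would record the Rokhlin formula: since $\tilde{f}|_{J(\tilde{f})}$ is open and distance-expanding, Remark~\ref{l0jJwi7MjQ} provides a finite Borel one-sided generator $\mathfrak{Q}$ of $J(\tilde{f})$, and finiteness gives $H_\mu(\mathfrak{Q})<\infty$, so $h_\mu(\tilde{f})=h_\mu(\tilde{f},\mathfrak{Q})=H_\mu(\mathfrak{Q}\mid\mathfrak{Q}^{-})$ by the Rokhlin formula for endomorphisms (see \cite{MR2656475}); since $\mathfrak{Q}$ generates, $\mathfrak{Q}^{-}=\tilde{f}^{-1}(\tilde\varepsilon)$ and $\mathfrak{Q}\vee\mathfrak{Q}^{-}=\tilde\varepsilon$, hence
\[
h_\mu(\tilde{f})=H_\mu\bigl(\tilde\varepsilon\mid\tilde{f}^{-1}(\tilde\varepsilon)\bigr).
\]
The same reasoning applied to the subshift of finite type $(X(S),\sigma)$, using as generator the partition $\mathfrak{P}_{1}$ of $X(S)$ into the sets $\{\xi\in X(S):\xi_{1}=\alpha\}$ ($\alpha\in I$), gives $h_\nu(\sigma)=H_\nu(\varepsilon_{X(S)}\mid\sigma^{-1}(\varepsilon_{X(S)}))$.

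Next I would split $h_\mu(\tilde{f})$. Put $\mathcal{G}:=\pi_{1}^{-1}(\mathfrak{P}_{1})$, a finite partition of $J(\tilde{f})$. By the chain rule for conditional entropy,
\[
h_\mu(\tilde{f})=H_\mu\bigl(\mathcal{G}\mid\tilde{f}^{-1}(\tilde\varepsilon)\bigr)+H_\mu\bigl(\tilde\varepsilon\mid\mathcal{G}\vee\tilde{f}^{-1}(\tilde\varepsilon)\bigr).
\]
For the fibre term, if $\tilde{z}=(\eta,w)\in J(\tilde{f})$ and $\alpha:=\eta_{1}$, then the atom of $\mathcal{G}\vee\tilde{f}^{-1}(\tilde\varepsilon)$ containing $\tilde{z}$ is $\{\eta\}\times g_{\alpha}^{-1}(g_{\alpha}(w))$, which by Remark~\ref{l0jJwi7MjQ} has exactly $\deg(g_{\alpha})$ elements; hence the conditional distribution of $\tilde\varepsilon$ on that atom is carried by $\deg(g_{\eta_{1}})$ atoms, and therefore
\[
H_\mu\bigl(\tilde\varepsilon\mid\mathcal{G}\vee\tilde{f}^{-1}(\tilde\varepsilon)\bigr)\le\int_{J(\tilde{f})}\log\deg\bigl(g_{(\pi_{1}(\tilde{z}))_{1}}\bigr)\,\mathrm{d}\mu(\tilde{z})=\int_{X(S)}\psi\,\mathrm{d}\nu.
\]

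For the base term, the relation $\pi_{1}\circ\tilde{f}=\sigma\circ\pi_{1}$ implies that each atom $\tilde{f}^{-1}(\tilde{y})$ is contained in a single atom of $\pi_{1}^{-1}(\sigma^{-1}(\varepsilon_{X(S)}))$; replacing the conditioning partition by this coarser one can only increase the conditional entropy, and transporting along $\pi_{1}$ (so that $\nu=(\pi_{1})_{*}\mu$ appears) gives
\[
H_\mu\bigl(\mathcal{G}\mid\tilde{f}^{-1}(\tilde\varepsilon)\bigr)\le H_\mu\bigl(\pi_{1}^{-1}(\mathfrak{P}_{1})\mid\pi_{1}^{-1}(\sigma^{-1}(\varepsilon_{X(S)}))\bigr)=H_\nu\bigl(\mathfrak{P}_{1}\mid\sigma^{-1}(\varepsilon_{X(S)})\bigr),
\]
which equals $H_\nu(\varepsilon_{X(S)}\mid\sigma^{-1}(\varepsilon_{X(S)}))=h_\nu(\sigma)$ since $\mathfrak{P}_{1}\vee\sigma^{-1}(\varepsilon_{X(S)})=\varepsilon_{X(S)}$. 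Adding the two estimates yields $h_\mu(\tilde{f})\le h_\nu(\sigma)+\int_{X(S)}\psi\,\mathrm{d}\nu$. For the last assertion, the variational principle on $J(\tilde{f})$, this inequality, and the variational principle on $X(S)$ give
\[
h_{\mathrm{top}}(\tilde{f})=\sup_{\mu\in\mathcal{M}^{\tilde{f}}(J(\tilde{f}))}h_\mu(\tilde{f})\le\sup_{\nu\in\mathcal{M}^{\sigma}(X(S))}\Bigl(h_\nu(\sigma)+\int_{X(S)}\psi\,\mathrm{d}\nu\Bigr)=\Press(\sigma,\psi),
\]
and since $\deg(g_\xi)=\prod_{k=1}^{|\xi|}\deg(g_{\xi_{k}})$ we have $N_{n}=\sum_{\xi\in X^{n}(S)}\deg(g_\xi)=\sum_{i,j\in V}(M^{n})_{ij}$, whence $\Press(\sigma,\psi)=\lim_{n}\frac{1}{n}\log N_{n}=\log\rho(M)$ by Definition~\ref{r8BiPogUOU} and Gelfand's formula; thus $h_{\mathrm{top}}(\tilde{f})\le\log\rho(M)$.

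The step I expect to be the main obstacle is the first one: justifying $h_\mu(\tilde{f})=H_\mu(\tilde\varepsilon\mid\tilde{f}^{-1}(\tilde\varepsilon))$ and its base analogue rests on the existence of a finite one-sided generator for an open distance-expanding map and on the Rokhlin formula for non-invertible transformations from \cite{MR2656475}. Once these are in hand, the remaining steps use only the chain rule and the monotonicity of conditional entropy, together with the combinatorial fact — valid because the system is hyperbolic along fibres — that all preimages lying in the Julia set are simple, so that an atom of $\mathcal{G}\vee\tilde{f}^{-1}(\tilde\varepsilon)$ contains precisely $\deg(g_{\eta_{1}})$ points.
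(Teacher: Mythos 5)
Your proof is correct, and it reaches the same two quantitative ingredients as the paper's proof --- the fibre atoms of the conditioning partition carry at most $\deg(g_{\eta_{1}})$ points, and $\Press(\sigma,\psi)=\log\rho(M)$ via the path-counting identity $N_{n}=\sum_{i,j}(M^{n})_{ij}$ --- but it gets to the decomposition $h_\mu(\tilde{f})\leq h_\nu(\sigma)+\int\psi\,\mathrm{d}\nu$ by a different mechanism. The paper invokes the Ledrappier--Walters relative variational principle \cite[Lemma 3.1]{MR476995} to write the exact identity $h_\mu(\tilde{f})=h_\nu(\sigma)+h_\mu(\tilde{f}\mid\sigma)$ and then bounds the relative entropy by $H_\mu\bigl(\pi_{2}^{-1}(\varepsilon_{\rsp})\mid\tilde{f}^{-1}(\tilde{\varepsilon})\vee\pi_{1}^{-1}(\varepsilon_{X(S)})\bigr)$ following Sumi's argument; you instead start from the Rokhlin formula $h_\mu(\tilde{f})=H_\mu(\tilde{\varepsilon}\mid\tilde{f}^{-1}(\tilde{\varepsilon}))$ and split by the chain rule, noting that $\pi_{1}^{-1}(\mathfrak{P}_{1})\vee\tilde{f}^{-1}(\tilde{\varepsilon})=\pi_{1}^{-1}(\varepsilon_{X(S)})\vee\tilde{f}^{-1}(\tilde{\varepsilon})$, so your fibre term is literally the same conditional entropy the paper estimates. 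What your route buys is self-containedness: it replaces the Ledrappier--Walters lemma by elementary monotonicity and the chain rule for conditional entropy, at the cost of leaning on the existence of a finite one-sided generator of finite entropy (Remark~\ref{l0jJwi7MjQ}, which the paper also uses elsewhere, e.g.\ for the weak-Jacobian form of the Rokhlin formula in Theorem~\ref{NBnbDkLhPN}); strictly speaking that remark guarantees the generator only on each ergodic component, so for a non-ergodic $\mu$ you should either pass through the ergodic decomposition or note that the chain-rule argument gives the inequality componentwise and integrates. Also, the atom you describe should be intersected with $J(\tilde{f})$, i.e.\ it is $\bigl(\{\eta\}\times g_{\eta_{1}}^{-1}(g_{\eta_{1}}(w))\bigr)\cap J(\tilde{f})$, but since you only need the upper bound $\deg(g_{\eta_{1}})$ on its cardinality this is harmless.
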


\begin{proof}
Fix $\mu\in \mathcal{M}^{\tilde{f}}(J(\tilde{f}))$ and set $\nu=(\pi_{1})_*\mu$.
Note that $\pi_{1}\circ \tilde{f}=\sigma\circ \pi_{1}$.
By \cite[Lemma 3.1]{MR476995}, we have $h_\mu(\tilde{f})=h_\nu(\sigma) + h_\mu(\tilde{f}\mid \sigma)$, where  $h_\mu(\tilde{f}\mid\sigma)$ denotes the relative entropy of $\tilde{f}$ with respect to  $\sigma$.
We show the inequality 
\begin{equation*}
h_\mu(\tilde{f}\mid \sigma)\leq \int_{X(S)}\psi \mathrm{d}\nu.
\end{equation*}
By the same argument of the proof of \cite[Theorem 6.10]{MR1767945}, we obtain 
\[
h_\mu(\tilde{f}\mid \sigma)
\leq 
H_\mu\left(\pi_{2}^{-1}(\varepsilon_{\rsp})\ \Big|\ 
\tilde{f}^{-1}(\tilde{\varepsilon})\ \vee\ \pi_{1}^{-1}(\varepsilon_{X(S)})\right).
\]
Let $\tilde{z}=(\xi, z) \in J(\tilde{f})$.
The partition element of $\tilde{f}^{-1}(\tilde{\varepsilon})\vee \pi_{1}^{-1}(\varepsilon_{X(S)})$ containing $\tilde{z}$ is
\[
E(\tilde{z})
:=
\left\{(\xi,w) \in J(\tilde{f}): g_{\xi|1}(w)=g_{\xi|1}(z)\right\}. 
\]
By \cite[Theorem 2.6.7]{MR2656475}, there exists a canonical system of conditional measures
\[
(\mu_E)_{E \in \tilde{f}^{-1}(\tilde\varepsilon)\vee \pi_{1}^{-1}(\varepsilon_{X(S)})}. 
\]
For $\mu$-a.e.  $\tilde{z}\in J(\tilde{f})$, we write $\mu_{E(\tilde{z})}$ for the conditional measure supported on the partition element $E(\tilde{z})$.
We also write $\pi_{2}^{-1}(\varepsilon_{\rsp})|E(\tilde{z}):=\{A \cap E(\tilde{z}) \colon  A \in \pi_{2}^{-1}(\varepsilon_{\rsp}), A \cap E(\tilde{z}) \neq \varnothing\}$. 
Since $\pi_{2}^{-1}(\varepsilon_{\rsp})$ induces on $E(\tilde{z})$ with at most
$\deg(g_{\xi|1})$ partition elements, we have
\[
\sum_{A \in \pi_{2}^{-1}(\varepsilon_{\rsp})|E(\tilde{z})} - \mu_{E(\tilde{z})}(A)\log\mu_{E(\tilde{z})}(A)
 \leq \log\deg(g_{\xi|1})
\ =\ \tilde\psi(\tilde{z})
\]
for $\mu$-a.e.  $\tilde{z} \in J(\tilde{f})$.
Integrating this estimate and using the definition of conditional entropy yields
\[
H_\mu \left(\pi_{2}^{-1}(\varepsilon_{\rsp})\ \Big|\ 
\tilde{f}^{-1}(\tilde{\varepsilon})\ \vee\ \pi_{1}^{-1}(\varepsilon_{X(S)})\right)
 \leq 
\int_{J(\tilde{f})} \tilde\psi \mathrm{d}\mu
=
\int_{X(S)}\psi \mathrm{d}\nu.
\]
Hence, the first assertion follows.
Finally, by the variational principle \cite[Theorem 9.10]{MR648108},
\[
h_{\mathrm{top}}(\tilde{f})
\le
\sup_{m\in \mathcal{M}^{\sigma}(X(S))}
\left(h_m(\sigma)+\int_{X(S)}\psi \mathrm{d}m\right)
=
\Press (\sigma,\psi).
\]
Using Definition \ref{r8BiPogUOU} and a similar argument to the proof of \cite[Theorem 7.13(ii)]{MR648108}, we obtain $\Press (\sigma,\psi)=\log\rho(M)$.
Therefore, we have $h_{\mathrm{top}}(\tilde{f})\le\log\rho(M)$.
\end{proof}

\begin{thm}\label{NBnbDkLhPN}
Let $t \in \R$ and $a^{(t)} := (a^{(t)}_{\alpha})_{\alpha \in I} \in \mathcal{W}^{+}(S)$.
Then, we have 
\[
h_{\tilde{\mu}_{a^{(t)}}}(\tilde{f})
+(t-1) \int_{J(\tilde{f})}
 \log\deg(g_{\xi_{1}}) \mathrm{d}\tilde{\mu}_{a^{(t)}}(\xi, z)
=
\log\rho\left(M^{(t)}\right).
\]
In particular,  we have $\Press (\sigma, \psi) = h_{\mathrm{top}}(\tilde{f}) =\log\rho(M)$.
\end{thm}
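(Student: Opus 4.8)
The plan is to compute $h_{\tilde{\mu}_{a^{(t)}}}(\tilde{f})$ by the Rokhlin entropy formula for endomorphisms, substitute the explicit value from Lemma~\ref{XmrtH1fPmC}, add the correction term $(t-1)\int\log\deg(g_{\xi_{1}})\,d\tilde{\mu}_{a^{(t)}}$, and then collapse the result to $\log\rho(M^{(t)})$ using only the defining formula for $a^{(t)}_{\alpha}$ and the $\sigma$-invariance of the projected measure $\nu_{a^{(t)}}:=(\pi_{1})_{*}\tilde{\mu}_{a^{(t)}}$.

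I would begin by recording that $\tilde{f}|_{J(\tilde{f})}$ is an open, distance-expanding map, so by Remark~\ref{l0jJwi7MjQ} it admits a finite Borel partition of sufficiently small diameter which is a one-sided generator of finite entropy; hence the Rokhlin formula (cf.\ \cite{MR2656475}) gives $h_{\tilde{\mu}_{a^{(t)}}}(\tilde{f})=H_{\tilde{\mu}_{a^{(t)}}}(\tilde{\varepsilon}\mid \tilde{f}^{-1}(\tilde{\varepsilon}))$. By Lemma~\ref{XmrtH1fPmC}, together with the cylinder identity $\nu_{a^{(t)}}([\alpha])=c_{\bt(\alpha)}a^{(t)}_{\alpha}$, this equals $\int_{X(S)}\bigl(-\log a^{(t)}_{\xi_{1}}+\log\deg(g_{\xi_{1}})\bigr)\,d\nu_{a^{(t)}}(\xi)$ (the integral form established inside the proof of Lemma~\ref{XmrtH1fPmC}). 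Since the correction term also factors through $\pi_{1}$, so that $\int_{J(\tilde{f})}\log\deg(g_{\xi_{1}})\,d\tilde{\mu}_{a^{(t)}}=\int_{X(S)}\log\deg(g_{\xi_{1}})\,d\nu_{a^{(t)}}$, the left-hand side of the asserted identity becomes $\int_{X(S)}\bigl(t\log\deg(g_{\xi_{1}})-\log a^{(t)}_{\xi_{1}}\bigr)\,d\nu_{a^{(t)}}$.

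Next I would substitute $a^{(t)}_{\alpha}=\deg(g_{\alpha})^{t}u^{(t)}_{\bi(\alpha)}\,/\,(\rho_{t}u^{(t)}_{\bt(\alpha)})$, which turns the integrand into $\log\rho_{t}+\log u^{(t)}_{\bt(\xi_{1})}-\log u^{(t)}_{\bi(\xi_{1})}$. The two eigenvector terms cancel after integration: on $X(S)$ one has $\bt(\xi_{1})=\bi(\xi_{2})=\bi((\sigma\xi)_{1})$, so $\xi\mapsto\log u^{(t)}_{\bt(\xi_{1})}$ is the composition of $\xi\mapsto\log u^{(t)}_{\bi(\xi_{1})}$ with $\sigma$, and the $\sigma$-invariance of $\nu_{a^{(t)}}$ forces the two integrals to agree (every integrand here is bounded, as $V$ is finite and $u^{(t)}_{i}>0$). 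Hence the left-hand side equals $\log\rho_{t}=\log\rho(M^{(t)})$, proving the first assertion. For the ``in particular'' statement, put $t=1$: the correction term vanishes and we get $h_{\tilde{\mu}_{a^{(1)}}}(\tilde{f})=\log\rho(M)$ with $\tilde{\mu}_{a^{(1)}}\in\mathcal{M}^{\tilde{f}}(J(\tilde{f}))$ by Proposition~\ref{6QlyKESWkv}; the variational principle then yields $h_{\mathrm{top}}(\tilde{f})\ge\log\rho(M)$, while Lemma~\ref{t0nQ0VqNK1} yields $h_{\mathrm{top}}(\tilde{f})\le\log\rho(M)$ and also records $\Press(\sigma,\psi)=\log\rho(M)$, so altogether $\Press(\sigma,\psi)=h_{\mathrm{top}}(\tilde{f})=\log\rho(M)$ (which in particular proves Main Theorem~\ref{GQc0T8Xqyu}).

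The one ingredient not literally contained in the excerpt is the Rokhlin formula $h_{\tilde{\mu}_{a^{(t)}}}(\tilde{f})=H_{\tilde{\mu}_{a^{(t)}}}(\tilde{\varepsilon}\mid \tilde{f}^{-1}(\tilde{\varepsilon}))$, and I expect justifying it to be the main (though still routine) obstacle: one uses that a small-diameter partition $\mathcal{P}$ is a one-sided generator for the expansive map $\tilde{f}|_{J(\tilde{f})}$, so that $h_{\tilde{\mu}_{a^{(t)}}}(\tilde{f})=H_{\tilde{\mu}_{a^{(t)}}}(\mathcal{P}\mid \tilde{f}^{-1}(\tilde{\varepsilon}))$, and then that $\mathcal{P}\vee \tilde{f}^{-1}(\tilde{\varepsilon})=\tilde{\varepsilon}$ because distinct $\tilde{f}$-preimages of a point are uniformly separated (a consequence of distance-expansion), which upgrades this to $H_{\tilde{\mu}_{a^{(t)}}}(\tilde{\varepsilon}\mid \tilde{f}^{-1}(\tilde{\varepsilon}))$. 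An alternative route bypassing Lemma~\ref{XmrtH1fPmC} is to observe that $\phi_{a^{(t)}}$ is cohomologous to $(t-1)\tilde{\psi}-\log\rho_{t}$ via the coboundary $(\xi,z)\mapsto\log u^{(t)}_{\bi(\xi_{1})}$ and that $\mathcal{L}_{a^{(t)}}1=1$ makes $\tilde{\mu}_{a^{(t)}}$ the equilibrium state of $\phi_{a^{(t)}}$ at topological pressure $0$, so that integrating the cohomology relation against $\tilde{\mu}_{a^{(t)}}$ produces the identity directly.
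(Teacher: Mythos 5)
Your proposal is correct and follows essentially the same route as the paper: the paper computes $h_{\tilde{\mu}_{a^{(t)}}}(\tilde{f})=\int\log J_{\tilde{\mu}_{a^{(t)}}}(\tilde{f})\,\mathrm{d}\tilde{\mu}_{a^{(t)}}$ via the weak Jacobian (citing \cite[Theorem 2.9.7]{MR2656475} together with Remark \ref{l0jJwi7MjQ} and Lemma \ref{TbxwSFQbox2}), which is exactly your Rokhlin-formula step $h=H_{\tilde{\mu}_{a^{(t)}}}(\tilde{\varepsilon}\mid\tilde{f}^{-1}(\tilde{\varepsilon}))$ combined with the explicit conditional measures, and then performs the identical substitution of $a^{(t)}_{\alpha}$ and the identical cancellation of the eigenvector terms by $\sigma$-invariance of $(\pi_{1})_{*}\tilde{\mu}_{a^{(t)}}$, finishing the $t=1$ case with the variational principle and Lemma \ref{t0nQ0VqNK1} just as you do.
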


\begin{proof}
Let $\mu := \tilde\mu_{a^{(t)}}$ and $a:= a^{(t)}$.
Noting Remark \ref{l0jJwi7MjQ},  we have 
$h_{\mu}(\tilde{f}) = \int \log J_{\mu}(\tilde{f})(\tilde{z})  \mathrm{d}\mu(\tilde{z})$, 
where $J_{\mu}(\tilde{f})$ is the weak Jacobian of $\tilde{f}$ with respect to $\mu$; see \cite[Theorem 2.9.7]{MR2656475}.
By Lemma \ref{TbxwSFQbox2}, for $\mu$-a.e.  $\tilde{z}=(\xi,z)\in J(\tilde{f})$ we obtain
\[
\log J_{\mu}(\tilde{f})(\tilde{z})
=
\log\rho\left(M^{(t)}\right) + \log u^{(t)}_{\bt(\xi_{1})} - \log u^{(t)}_{\bi(\xi_{1})} - (t-1)\log\deg(g_{\xi_{1}}).
\]
Integrating this identity with respect to $\mu$ yields
\begin{align*}
h_{\mu}(\tilde{f})
=
\log\rho\left(M^{(t)}\right)
& + \int_{J(\tilde{f})} \left(\log u^{(t)}_{\bt(\xi_{1})} -\log u^{(t)}_{\bi(\xi_{1})}\right) \mathrm{d}\mu(\xi,z) \\
& -(t-1) \int_{J(\tilde{f})}\log\deg(g_{\xi_{1}}) \mathrm{d}\mu(\xi,z).
\end{align*}
It remains to show that the middle term is equal to $0$.
Let $\varphi := \varphi^{(t)} \colon X(S)\to\R$ be given by $\varphi(\zeta):=\log u^{(t)}_{\bi(\zeta)}$.
Since $\bt(\xi_{1})=\bi(\sigma(\xi))$ for every $\xi \in X(S)$, we have
$\log u^{(t)}_{\bt(\xi_{1})}=\varphi(\sigma(\xi))$ and $\log u^{(t)}_{\bi(\xi_{1})}=\varphi(\xi)$.
Therefore, we have 
\[
\int_{J(\tilde{f})} \log u^{(t)}_{\bt(\xi_{1})} \mathrm{d}\mu(\xi,z)
=
\int_{X(S)} \varphi(\sigma(\xi)) \mathrm{d}\nu_{a}(\xi) 
\]
and
\[
\int_{J(\tilde{f})} \log u^{(t)}_{\bi(\xi_{1})} \mathrm{d}\mu(\xi,z)
=
\int_{X(S)} \varphi(\xi) \mathrm{d}\nu_{a}(\xi).
\]
Since $\nu_{a}=(\pi_{1})_*\mu$ is $\sigma$-invariant, the right-hand sides coincide, hence the middle term is equal to $0$.
Therefore, we obtain
\[
h_{\mu}(\tilde{f})
+(t-1)
\int_{J(\tilde{f})}\log\deg(g_{\xi_{1}})\mathrm{d}\mu(\xi, z)
=
\log\rho\left(M^{(t)}\right).
\]
Taking $t=1$ yields
$h_{\tilde{\mu}_{a^{(1)}}}(\tilde{f}) = \log\rho(M^{(1)}) = \log\rho(M)$.
Since $h_{\mathrm{top}}(\tilde{f}) \geq h_{\tilde{\mu}_{a^{(1)}}}(\tilde{f}) =\log\rho(M)$ by the variational principle \cite[Theorem 8.6]{MR648108}, it follows from Lemma \ref{t0nQ0VqNK1} that $h_{\mathrm{top}}(\tilde{f}) = \log\rho(M)$, which completes the proof. 
\end{proof}

\section{Sequential capacity topological pressure}\label{GE4VqBJMCw}

In this section, we characterise the sequential capacity topological pressure for rational GDMSs in our setting.
\begin{SA*}
Throughout this section, we fix a finitely generated, irreducible rational GDMS $S=(V, E, (\Gamma_{e})_{e \in E})$.
We also fix $R>0$ and a family of vertex-wise holes $(\sfH_{i}(R))_{i \in V}$.
For each $j\in V$, we fix $\kappa_{j}\in X_j(S)$ such that $y_j\in J_{\kappa_{j}}$; see  Notation \ref{u58n7Yc3qy}.
Moreover, we assume that the associated skew product
$\tilde{f} \colon X(S)\times\rsp\circlearrowleft$ is expanding along fibres.
\end{SA*}

\begin{dfn}\label{FAYWI6yRa4}
For an open set $\Omega \subset \rsp$ and $\xi \in X^{*}(S)$, let $\Comp(\Omega,g_{\xi})$
denote the set of connected components of $g_{\xi}^{-1}(\Omega)$.
For $\xi \in X^{*}(S)$, put
\[
\Comp(\xi, R):=\Comp(D(y_{\bt(\xi)}, R), g_{\xi}).
\]
For each $U\in \Comp(\xi, R)$, let $U^{\prime}$ be the unique connected component of $g_{\xi}^{-1} (D(y_{\bt(\xi)},2R))$ containing $U$, and let
\[
\gamma_{\xi, U} \colon D(y_{\bt(\xi)},2R)\to U^{\prime}
\]
be the inverse branch of $g_{\xi}$ such that $g_{\xi}\circ\gamma_{\xi, U}(z)=z$ for all
$z \in D(y_{\bt(\xi)},2R)$.
For $\xi \in X^{*}(S)$ and $U\in \Comp(\xi, R)$, we write
\[
B_{\xi, U}(R):=
\gamma_{\xi, U}(\sfH_{\bt(\xi)}(R)) \subset J_{\bi(\xi)}(S), 
\quad \tilde{B}_{\xi, U}(R)
:=
\left([\xi]\times \gamma_{\xi, U}(\sfH_{\bt(\xi)}(R))\right)\cap J(\tilde{f}) 
\]
and
\[
\mathrm{w}_{\delta}(\xi, U)
:=
|(\gamma_{\xi, U})^{\prime}(y_{\bt(\xi)})|^{\delta}
=
\left|(g_{\xi})^{\prime}(\gamma_{\xi, U}(y_{\bt(\xi)}))\right|^{-\delta}.
\]
For $n \in \N$, we finally define
\[
Z_n^{\mathrm{geom}}(\delta, R)
:=
\sum_{\xi \in X^{n}(S)} \sum_{U\in \Comp(\xi, R)}
\mathrm{w}_{\delta}(\xi, U) 
\]
and
\[
\overline{\Press ^{\mathrm{geom}}}(\delta, R)
:=
\limsup_{n\to\infty}\frac1n\log Z_n^{\mathrm{geom}}(\delta, R)
\quad  \text{and} \quad 
\underline{\Press ^{\mathrm{geom}}}(\delta, R)
:=
\liminf_{n\to\infty}\frac1n\log Z_n^{\mathrm{geom}}(\delta, R).
\]
\end{dfn}

\begin{rmk}
Note that from Proposition \ref{HSecY02Qdj} it follows that for $\xi \in X^{*}(S)$ and $U\in \Comp(\xi, R)$ we have $\gamma_{\xi, U}(y_{\bt(\xi)}) \in  g_{\xi}^{-1}(J_{\kappa_{\bt(\xi)}})=J_{\xi\kappa_{\bt(\xi)}}$ and thus $\left(\xi\kappa_{\bt(\xi)}, \gamma_{\xi, U}(y_{\bt(\xi)})\right) \in J(\tilde{f})$. 

\end{rmk}

\begin{notation}\label{u58n7Yc3qy}
For $\xi \in X^{*}(S)$ and $U\in \Comp(\xi, R)$, we set
\[
\tilde{z}_{\xi, U}:=\left(\xi\kappa_{\bt(\xi)}, \gamma_{\xi, U}(y_{\bt(\xi)})\right) \in J(\tilde{f})
\]
and
\[
\mathcal{R}_{n}:=\{\tilde{z}_{\xi, U}:\ \xi \in X^{n}(S),\ U\in \Comp(\xi, R)\}.
\]
\end{notation}

\begin{lem}
For $n \in \N$, $\xi \in X^{n}(S)$ and  $U \in \Comp(\xi, R)$, we have $ \tilde{B}_{\xi, U}(R) \subset \widetilde{\sfHP}_{n}(R)$.
\end{lem}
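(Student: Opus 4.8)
The statement is a direct unwinding of the definitions together with the complete invariance of $J(\tilde{f})$ recorded in \eqref{l0mSj4Qp58}. Fix $n\in\N$, $\xi\in X^{n}(S)$ and $U\in\Comp(\xi,R)$, and let $\tilde{z}=(\tau,w)\in\tilde{B}_{\xi,U}(R)$ be arbitrary. By the definition of $\tilde{B}_{\xi,U}(R)$ we have $\tau\in[\xi]$, so $\tau|n=\xi$; we have $w\in\gamma_{\xi,U}(\sfH_{\bt(\xi)}(R))$; and we have $(\tau,w)\in J(\tilde{f})$. Our goal is to show $\tilde{f}^{n}(\tilde{z})\in\widetilde{\sfH}(R)$, which is exactly the assertion $\tilde{z}\in\tilde{f}^{-n}(\widetilde{\sfH}(R))=\widetilde{\sfHP}_{n}(R)$.

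First I would compute $\tilde{f}^{n}(\tilde{z})$. Since $\tau|n=\xi$ we have $g_{\tau|n}=g_{\xi}$, and hence $\tilde{f}^{n}(\tau,w)=(\sigma^{n}(\tau),g_{\xi}(w))$. Writing $w=\gamma_{\xi,U}(v)$ with $v\in\sfH_{\bt(\xi)}(R)\subset D(y_{\bt(\xi)},R)$, the inverse-branch identity $g_{\xi}\circ\gamma_{\xi,U}=\id$ on $D(y_{\bt(\xi)},2R)$ gives $g_{\xi}(w)=v\in\sfH_{\bt(\xi)}(R)$. Thus $\tilde{f}^{n}(\tilde{z})=(\sigma^{n}(\tau),v)$ with $v\in\sfH_{\bt(\xi)}(R)$.

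Next I would check the two membership conditions that define $\widetilde{\sfH}(R)$. For the symbolic factor: by the admissibility condition in $X(S)$ we have $\bi((\sigma^{n}\tau)_{1})=\bi(\tau_{n+1})=\bt(\tau_{n})=\bt(\xi_{n})=\bt(\xi)$, so $\sigma^{n}(\tau)\in X_{\bt(\xi)}(S)$; combined with $v\in\sfH_{\bt(\xi)}(R)$ this shows $(\sigma^{n}(\tau),v)\in X_{\bt(\xi)}(S)\times\sfH_{\bt(\xi)}(R)$. For membership in $J(\tilde{f})$: since $(\tau,w)\in J(\tilde{f})$ and $\tilde{f}(J(\tilde{f}))=J(\tilde{f})$ by \eqref{l0mSj4Qp58}, iterating gives $\tilde{f}^{n}(\tau,w)=(\sigma^{n}(\tau),v)\in J(\tilde{f})$. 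Therefore $(\sigma^{n}(\tau),v)\in\bigl(X_{\bt(\xi)}(S)\times\sfH_{\bt(\xi)}(R)\bigr)\cap J(\tilde{f})\subset\widetilde{\sfH}(R)$, so $\tilde{z}\in\widetilde{\sfHP}_{n}(R)$. Since $\tilde{z}$ was arbitrary, $\tilde{B}_{\xi,U}(R)\subset\widetilde{\sfHP}_{n}(R)$.

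There is no real obstacle here: the argument is purely formal, the only points requiring a word of justification being (i) that $g_{\xi}\circ\gamma_{\xi,U}$ acts as the identity on the relevant disc, which is built into Definition \ref{FAYWI6yRa4}, and (ii) the forward invariance $\tilde{f}(J(\tilde{f}))\subset J(\tilde{f})$ from \eqref{l0mSj4Qp58}. One should only be mildly careful to keep the vertex bookkeeping straight, i.e.\ that the hole reached after $n$ iterations is the one at the terminal vertex $\bt(\xi)$, which is where the cylinder $[\xi]$ and the inverse branch $\gamma_{\xi,U}$ were set up.
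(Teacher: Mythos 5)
Your proof is correct and follows essentially the same route as the paper's: unwind the definition of $\tilde{B}_{\xi,U}(R)$, compute $\tilde{f}^{n}(\tilde{z})=(\sigma^{n}(\tau),g_{\xi}(w))$ using the inverse-branch identity to land in $\sfH_{\bt(\xi)}(R)$, and invoke the invariance \eqref{l0mSj4Qp58} to keep the image in $J(\tilde{f})$. The only cosmetic difference is that you verify $\sigma^{n}(\tau)\in X_{\bt(\xi)}(S)$ via the admissibility condition, while the paper writes $\tau=\xi\tau'$ with $\tau'\in X_{\bt(\xi)}(S)$ directly; these are the same observation.
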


\begin{proof}
Let $(\eta, z) \in \tilde{B}_{\xi, U}(R)$.
Then, we have $\eta \in [\xi]$ and there exists $\tau \in X_{\bt(\xi)}(S)$ such that $\eta=\xi\tau$.
Moreover, since $z\in \gamma_{\xi, U}(\sfH_{\bt(\xi)}(R))$, there exists  $u\in \sfH_{\bt(\xi)}(R)$ such that $g_\xi(z)=u$.
Therefore,
\[
\tilde{f}^{ n}(\eta, z)=(\sigma^n(\eta),g_\xi(z))=(\tau,u) \in X_{\bt(\xi)}(S)\times \sfH_{\bt(\xi)}(R).
\]
Since $(\eta, z) \in J(\tilde{f})$,  \eqref{l0mSj4Qp58} implies that $\tilde{f}^{ n}(\eta, z) \in J(\tilde{f})$.
Hence $\tilde{f}^{ n}(\eta, z) \in \widetilde{\sfH}(R)$, which proves $\tilde{B}_{\xi, U}(R)\subset \widetilde{\sfHP}_{n}(R)$ and completes the proof.
\end{proof}

The following lemma was observed in the proof of \cite[Proposition 4.4.3]{MR2656475}.
\begin{lem}\label{GSCyI6vVWN}
There exists $\eta_{0}>0$ such that for any $\tilde{y} \in J(\tilde{f})$ and  $n \in \N$ the set $\tilde{f}^{-n}(\tilde{y})$ is $(n,2\eta_{0})$-separated, i.e. for any distinct points $\tilde{z}, \tilde{z}^{\prime} \in \tilde{f}^{-n}(\tilde{y})$, we have $d_{n}(\tilde{z},\tilde{z}^{\prime})\geq 2\eta_{0}$.
\end{lem}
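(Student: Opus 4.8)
The plan is to reduce the claim to the uniform local injectivity of $\tilde{f}|_{J(\tilde{f})}$, which is part of the standard structure of open, distance-expanding maps (cf.\ \cite{MR2656475}). Under the Standing Assumption $\tilde{f}|_{J(\tilde{f})}$ is such a map, so there are constants $\lambda>1$ and $\xi>0$ with
\[
d_{X(S)\times\C}\big(\tilde{f}(\tilde{a}),\tilde{f}(\tilde{b})\big)\ \ge\ \lambda\, d_{X(S)\times\C}(\tilde{a},\tilde{b})
\]
for all $\tilde{a},\tilde{b}\in J(\tilde{f})$ with $d_{X(S)\times\C}(\tilde{a},\tilde{b})\le\xi$; this is the classical distance-expanding estimate for $\tilde{f}|_{J(\tilde{f})}$. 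Put $\eta_{0}:=\xi/2$. Then $\tilde{f}$ is injective on every ball of radius $2\eta_{0}$ in $J(\tilde{f})$: if $\tilde{f}(\tilde{a})=\tilde{f}(\tilde{b})$ and $d_{X(S)\times\C}(\tilde{a},\tilde{b})<2\eta_{0}=\xi$, then the displayed inequality forces $d_{X(S)\times\C}(\tilde{a},\tilde{b})\le\lambda^{-1}\cdot 0=0$; equivalently, two distinct points of $J(\tilde{f})$ with a common $\tilde{f}$-image lie at distance at least $2\eta_{0}$.

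Next I would fix $\tilde{y}\in J(\tilde{f})$, $n\in\N$, and two distinct points $\tilde{z},\tilde{z}^{\prime}\in\tilde{f}^{-n}(\tilde{y})$; all iterates $\tilde{f}^{k}\tilde{z}$ and $\tilde{f}^{k}\tilde{z}^{\prime}$ for $0\le k\le n$ remain in $J(\tilde{f})$ by the complete invariance \eqref{l0mSj4Qp58}. Suppose, toward a contradiction, that $d_{n}(\tilde{z},\tilde{z}^{\prime})<2\eta_{0}$, that is, $d_{X(S)\times\C}(\tilde{f}^{k}\tilde{z},\tilde{f}^{k}\tilde{z}^{\prime})<2\eta_{0}$ for every $0\le k\le n-1$. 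A downward induction on $k$ finishes the job: since $\tilde{f}\big(\tilde{f}^{n-1}\tilde{z}\big)=\tilde{y}=\tilde{f}\big(\tilde{f}^{n-1}\tilde{z}^{\prime}\big)$ and $d_{X(S)\times\C}\big(\tilde{f}^{n-1}\tilde{z},\tilde{f}^{n-1}\tilde{z}^{\prime}\big)<2\eta_{0}$, injectivity gives $\tilde{f}^{n-1}\tilde{z}=\tilde{f}^{n-1}\tilde{z}^{\prime}$; applying the same step with $k=n-2,\dots,0$ in turn yields $\tilde{f}^{k}\tilde{z}=\tilde{f}^{k}\tilde{z}^{\prime}$ for all such $k$, and in particular $\tilde{z}=\tilde{z}^{\prime}$, contradicting our choice. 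Hence $d_{n}(\tilde{z},\tilde{z}^{\prime})\ge 2\eta_{0}$, which is exactly the desired $(n,2\eta_{0})$-separation, with $\eta_{0}$ independent of $\tilde{y}$ and of $n$.

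I do not anticipate a real obstacle: the only genuinely external input is the distance-expanding constant $\xi$ in the two-point form above, which is the classical set-up for open, distance-expanding maps recorded in \cite{MR2656475} and already in force in this section; everything else is the short downward induction, relying solely on uniform local injectivity together with the complete invariance of $J(\tilde{f})$.
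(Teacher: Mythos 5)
Your proof is correct, and it is exactly the standard argument the paper is invoking: the paper gives no proof of its own but refers to the proof of \cite[Proposition 4.4.3]{MR2656475}, where the same combination of the two-point distance-expanding estimate (hence injectivity of $\tilde{f}|_{J(\tilde{f})}$ on sets of diameter less than the expansivity constant) with a downward induction along the orbit yields the $(n,2\eta_{0})$-separation of $\tilde{f}^{-n}(\tilde{y})$. Your remark that complete invariance \eqref{l0mSj4Qp58} keeps all the relevant points in $J(\tilde{f})$ correctly handles the only point one might otherwise worry about.
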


\begin{lem}\label{iQ3l1o1cKC}
There exists $\varepsilon_{\mathrm{rep}}>0$ such that for every $n \in \N$, the finite set $\mathcal{R}_{n}$ is $(n,2\varepsilon_{\mathrm{rep}})$-separated in $J(\tilde{f})$.
Further, every Bowen ball $B_{n}(\tilde{z},\varepsilon_{\mathrm{rep}})$ intersects $\mathcal{R}_{n}$
in at most one point.
\end{lem}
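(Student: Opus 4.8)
The plan is to reduce everything to Lemma \ref{GSCyI6vVWN} together with the symbolic (base) coordinate on $X(S)$. First I would record that for $\xi \in X^{n}(S)$ with $j := \bt(\xi)$ and $U \in \Comp(\xi, R)$ one has
\[
\tilde{f}^{n}(\tilde{z}_{\xi, U})
= \bigl(\sigma^{n}(\xi\kappa_{j}),\, g_{\xi}(\gamma_{\xi, U}(y_{j}))\bigr)
= (\kappa_{j}, y_{j}),
\]
since $\sigma^{n}(\xi\kappa_{j}) = \kappa_{j}$ and $g_{\xi}\circ\gamma_{\xi, U} = \id$ on $D(y_{j}, 2R)$. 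Because $\kappa_{j}\in X_{j}(S)$ and $y_{j}\in J_{\kappa_{j}}$, the point $\tilde{w}_{j} := (\kappa_{j}, y_{j})$ lies in $J(\tilde{f})$, and \emph{all} points $\tilde{z}_{\xi, U}$ with a common terminal vertex $j$ belong to the single fibre $\tilde{f}^{-n}(\tilde{w}_{j})$.

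Next I would separate two distinct points $\tilde{z}_{\xi, U}\neq\tilde{z}_{\xi', W}$ of $\mathcal{R}_{n}$ by distinguishing the cases $\xi\neq\xi'$ and $\xi=\xi'$. In the first case, let $m := \min\{\ell : \xi_{\ell}\neq\xi'_{\ell}\}$, so $1\le m\le n$; at time $k := m-1$, which satisfies $0\le k<n$, the base coordinates of $\tilde{f}^{k}(\tilde{z}_{\xi, U})$ and $\tilde{f}^{k}(\tilde{z}_{\xi', W})$ are $\sigma^{m-1}(\xi\kappa_{\bt(\xi)})$ and $\sigma^{m-1}(\xi'\kappa_{\bt(\xi')})$, which already disagree in their first entry, so their $d_{X(S)}$-distance equals $1/2$ and hence $d_{n}(\tilde{z}_{\xi, U},\tilde{z}_{\xi', W})\ge 1/2$. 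In the second case, $\xi=\xi'$ forces $\bt(\xi)=\bt(\xi')=:j$ and $U\neq W$, so $\tilde{z}_{\xi, U}$ and $\tilde{z}_{\xi, W}$ are two distinct points of $\tilde{f}^{-n}(\tilde{w}_{j})$, and Lemma \ref{GSCyI6vVWN} gives $d_{n}(\tilde{z}_{\xi, U},\tilde{z}_{\xi, W})\ge 2\eta_{0}$. Taking $\varepsilon_{\mathrm{rep}} := \min\{\eta_{0},\tfrac14\}$ then yields that $\mathcal{R}_{n}$ is $(n,2\varepsilon_{\mathrm{rep}})$-separated for every $n$; finiteness of $\mathcal{R}_{n}$ is automatic, since $X^{n}(S)$ is finite by finite generation and $g_{\xi}^{-1}(D(y_{\bt(\xi)},R))$ has at most $\deg(g_{\xi})$ connected components.

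The last assertion is then immediate: the Bowen metric $d_{n}$ obeys the triangle inequality, so if $B_{n}(\tilde{z},\varepsilon_{\mathrm{rep}})$ contained two distinct points $\tilde{z}_{1},\tilde{z}_{2}$ of $\mathcal{R}_{n}$, we would get $d_{n}(\tilde{z}_{1},\tilde{z}_{2})\le d_{n}(\tilde{z}_{1},\tilde{z})+d_{n}(\tilde{z},\tilde{z}_{2})<2\varepsilon_{\mathrm{rep}}$, contradicting the separation just established. I do not expect a genuine obstacle here; the one point demanding care is that $\tilde{f}^{n}$ does not enter the Bowen metric $d_{n}$, which runs only over the times $0,\dots,n-1$, so the case of two points with \emph{different} terminal vertices cannot be settled by comparing their common images $\tilde{w}_{j}$ — it must instead be folded into the $\xi\neq\xi'$ case and handled via the disagreement of the base coordinates, after which both cases are short.
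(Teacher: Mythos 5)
Your proposal is correct and follows essentially the same two-case argument as the paper: the symbolic coordinate handles $\xi\neq\xi'$ (giving separation $1/2$ at time $m-1$), and Lemma \ref{GSCyI6vVWN} handles $\xi=\xi'$ via the common fibre $\tilde f^{-n}(\kappa_{\bt(\xi)},y_{\bt(\xi)})$, with the final claim by the triangle inequality. The only cosmetic difference is your choice $\varepsilon_{\mathrm{rep}}=\min\{\eta_0,1/4\}$ versus the paper's $\min\{\eta_0/2,1/4\}$; both satisfy the required inequalities.
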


\begin{proof}
Let $n \in \N$. 
Let $\eta_{0}>0$ be as in Lemma \ref{GSCyI6vVWN} and set $\varepsilon_{\mathrm{rep}}:=\min\{1/4,\eta_{0}/2\}$.
Take $\tilde{z}_{\xi, U}\neq \tilde{z}_{\xi^{\prime}, U^{\prime}}$, where $\xi := (\xi_{1}, \xi_{2}, \ldots, \xi_{n}), \xi^{\prime} := (\xi^{\prime}_{1}, \xi^{\prime}_{2}, \ldots, \xi^{\prime}_{n}) \in X^{n}(S)$.
Suppose that $\xi\neq\xi^{\prime}$.
Let $m := \min\{k \leq n \colon \xi_{k} \neq \xi^{\prime}_{k}\}$.
Then, we have 
\[
d_{X(S)} \left(\sigma^{m-1}(\xi\kappa_{\bt(\xi)}),\ \sigma^{m-1}(\xi^{\prime}\kappa_{\bt(\xi^{\prime})})\right)=\frac12.
\]
Therefore
$d_{n}(\tilde{z}_{\xi, U},\tilde{z}_{\xi^{\prime}, U^{\prime}})\geq 1/2\geq 2\varepsilon_{\mathrm{rep}}$.

Next, we suppose that $\xi=\xi^{\prime}$.
Then, $\tilde{f}^{n}(\tilde{z}_{\xi, U})=(\kappa_{\bt(\xi)},y_{\bt(\xi)})
=\tilde{f}^{n}(\tilde{z}_{\xi, U^{\prime}})$,
so both lie in $\tilde{f}^{-n}((\kappa_{\bt(\xi)}, y_{\bt(\xi)}))\cap J(\tilde{f})$.
By Lemma \ref{GSCyI6vVWN}, this set is $(n,2\eta_{0})$-separated, hence
\[
d_{n}(\tilde{z}_{\xi, U},\tilde{z}_{\xi, U^{\prime}})\geq 2\eta_{0}\geq 2\varepsilon_{\mathrm{rep}}.
\]
Finally, let $\tilde{z} \in J(\tilde{f})$.
If $B_{n}(\tilde{z},\varepsilon_{\mathrm{rep}})$ intersected $\mathcal{R}_{n}$ in two distinct points $\tilde{z}_{1},\tilde{z}_{2}\in \mathcal{R}_{n}$, then for $d_n$ we would have
\[
d_n(\tilde{z}_{1},\tilde{z}_{2})
\leq d_n(\tilde{z}_{1},\tilde{z})+d_n(\tilde{z},\tilde{z}_{2})
< \varepsilon_{\mathrm{rep}}+\varepsilon_{\mathrm{rep}}
=2\varepsilon_{\mathrm{rep}},
\]
contradicting the $(n,2\varepsilon_{\mathrm{rep}})$-separation of $\mathcal{R}_{n}$.
Hence, every $B_{n}(\tilde{z},\varepsilon_{\mathrm{rep}})$ intersects $\mathcal{R}_{n}$
in at most one point.
The proof is complete. 
\end{proof}

\begin{lem}\label{7xObqtHg0Y}
There exists a constant $K_{\mathrm{koe}}\ge1$ such that for any
$n \in \N$, $\xi \in X^{n}(S)$, $U \in \Comp(\xi, R)$ and 
$\tilde{z} \in \tilde{B}_{\xi, U}(R)$ we have 
\[
K_{\mathrm{koe}}^{-1} \mathrm{w}_{\delta}(\xi, U)
 \leq 
\exp(\calS_{n}(\delta\tilde{\varphi})(\tilde{z}))
 \leq 
K_{\mathrm{koe}} \mathrm{w}_{\delta}(\xi, U).
\]
\end{lem}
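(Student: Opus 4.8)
The plan is to control the Birkhoff sum $\calS_n(\delta\tilde\varphi)(\tilde z) = -\delta\log|(g_\xi)'(z)|$ along the fibre, using the Koebe distortion theorem on the enlarged disc $D(y_{\bt(\xi)}, 2R)$ where the inverse branch $\gamma_{\xi,U}$ is univalent. First I would unwind the definitions: for $\tilde z = (\eta, z) \in \tilde B_{\xi,U}(R)$, by construction $z = \gamma_{\xi,U}(u)$ for some $u \in \sfH_{\bt(\xi)}(R) \subset D(y_{\bt(\xi)}, R)$, so that $|(g_\xi)'(z)| = |(\gamma_{\xi,U})'(u)|^{-1}$ and hence $\exp(\calS_n(\delta\tilde\varphi)(\tilde z)) = |(\gamma_{\xi,U})'(u)|^{\delta}$, while $\mathrm{w}_\delta(\xi,U) = |(\gamma_{\xi,U})'(y_{\bt(\xi)})|^{\delta}$. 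Thus the claim reduces to a distortion estimate: the ratio $|(\gamma_{\xi,U})'(u)| / |(\gamma_{\xi,U})'(y_{\bt(\xi)})|$ is bounded above and below by a universal constant, uniformly over $\xi$, $U$ and $u \in D(y_{\bt(\xi)}, R)$.

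Next I would invoke Proposition \ref{ekoebe1} (the Koebe distortion theorem in the form stated in the excerpt). Since $\overline{D(y_{\bt(\xi)}, 2R)}^{\C}\cap P_{\bt(\xi)}(S) = \varnothing$ and $S$ is hyperbolic (as $\tilde f$ is expanding along fibres), every branch $\gamma_{\xi,U}$ extends to a univalent analytic map on $D(y_{\bt(\xi)}, 2R)$; this is exactly the well-definedness guaranteed by the hole condition. Applying Proposition \ref{ekoebe1} with centre $y_{\bt(\xi)}$, radius $2R$, and $t = 1/2$ gives
\[
\sup_{w\in D(y_{\bt(\xi)}, R)} |(\gamma_{\xi,U})'(w)|
\;\le\; \bsk(1/2)\,\inf_{w\in D(y_{\bt(\xi)}, R)} |(\gamma_{\xi,U})'(w)|.
\]
Since both $u$ and $y_{\bt(\xi)}$ lie in $D(y_{\bt(\xi)}, R)$, this yields $\bsk(1/2)^{-1} \le |(\gamma_{\xi,U})'(u)| / |(\gamma_{\xi,U})'(y_{\bt(\xi)})| \le \bsk(1/2)$, and raising to the power $\delta$ (with a harmless sign adjustment if $\delta < 0$, replacing $\bsk(1/2)$ by $\bsk(1/2)^{|\delta|}$) gives the lemma with $K_{\mathrm{koe}} := \bsk(1/2)^{|\delta|} = \bsK^{|\delta|}$.

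The only subtlety — and the step I would be most careful about — is making sure the Koebe estimate is applied to a genuinely univalent branch on the full disc $D(y_{\bt(\xi)}, 2R)$ rather than just on a subdomain, and that the constant is independent of $\xi$ and $U$; this independence is automatic because $\bsk$ in Proposition \ref{ekoebe1} depends only on $t$, not on the particular univalent map or disc. One should also note $z \in \C$ and $g_\xi(z) = u \in \C$ so that derivatives are taken in the Euclidean coordinate as declared, and that $\delta$ is a fixed real number so $K_{\mathrm{koe}}$ is a fixed constant $\ge 1$. With these points in place the proof is a direct application of the distortion theorem to the inverse branch evaluated at two points of the inner disc.
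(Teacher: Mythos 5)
Your proposal is correct and follows essentially the same route as the paper's proof: unwind $\exp(\calS_{n}(\delta\tilde\varphi)(\tilde z))=|(\gamma_{\xi,U})'(u)|^{\delta}$ for $u=g_{\xi}(z)\in \sfH_{\bt(\xi)}(R)$, then apply Proposition \ref{ekoebe1} with $t=1/2$ on $D(y_{\bt(\xi)},2R)$ to compare $|(\gamma_{\xi,U})'(u)|$ with $|(\gamma_{\xi,U})'(y_{\bt(\xi)})|$, yielding $K_{\mathrm{koe}}=\bsk(1/2)^{\delta}$ (the paper uses $\delta>0$, so your $|\delta|$ safeguard is unnecessary but harmless).
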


\begin{proof}
Fix $\xi \in X^{n}(S)$ and $U\in \Comp(\xi, R)$.
Take an arbitrary point $\tilde{z} \in \tilde{B}_{\xi, U}(R)$ such that $\pi_{1}(\tilde{z}) \in [\xi]$.
Denote $z := \pi_{2}(\tilde{z})$. 
By definition of $\tilde{B}_{\xi, U}(R)$, there exists a point
$u\in \sfH_{\bt(\xi)}(R)\subset D(y_{\bt(\xi)}, R)$ such that
$z=\gamma_{\xi, U}(u)$.
Hence, we have $\exp(\calS_{n}(\delta\tilde\varphi)(\tilde{z}))
=|(g_{\xi})^{\prime}(z)|^{-\delta}
=|(\gamma_{\xi, U})^{\prime}(u)|^{\delta}$.
Applying Proposition \ref{ekoebe1} to $D(y_{\bt(\xi)}, R) \subset D(y_{\bt(\xi)},2R)$ yields
\[
\sup_{w\in D(y_{\bt(\xi)}, R)}|(\gamma_{\xi, U})^{\prime}(w)|
\leq \bsk(1/2) 
\inf_{w\in D(y_{\bt(\xi)}, R)}|(\gamma_{\xi, U})^{\prime}(w)|,
\]
which implies that for each $u \in D(y_{\bt(\xi)}, R)$, we have
\[
\bsk(1/2)^{-1}|(\gamma_{\xi, U})^{\prime}(y_{\bt(\xi)})|
\leq |(\gamma_{\xi, U})^{\prime}(u)|
\leq \bsk(1/2)|(\gamma_{\xi, U})^{\prime}(y_{\bt(\xi)})|.
\]
Raising to the power $\delta>0$ yields
\[
\bsk(1/2)^{-\delta} |(\gamma_{\xi, U})^{\prime}(y_{\bt(\xi)})|^{\delta}
\leq |(\gamma_{\xi, U})^{\prime}(u)|^{\delta}
\leq \bsk(1/2)^{\delta} |(\gamma_{\xi, U})^{\prime}(y_{\bt(\xi)})|^{\delta}.
\]
Finally, since $\mathrm{w}_{\delta}(\xi, U)=|(\gamma_{\xi, U})^{\prime}(y_{\bt(\xi)})|^{\delta}$,
the desired estimate follows with $K_{\mathrm{koe}}:=\bsk(1/2)^{\delta}$.
\end{proof}

The following lemma follows from \cite[Lemma 4.4.2 (the Pre-Bounded Distortion Lemma)]{MR2656475}.
\begin{lem}\label{L3EQBJ1CFR}
There exist $\varepsilon_{\mathrm{bd}}>0$ and $D_{\mathrm{bd}}\geq 1$ such that for any $n \in \N$ and $\tilde{z} \in J(\tilde{f})$ we have 
\[
\exp\left(\sup_{\tilde{w} \in B_{n}(\tilde{z},\varepsilon_{\mathrm{bd}})} \calS_{n}(\delta\tilde{\varphi})(\tilde{w})\right)
\le
D_{\mathrm{bd}}\cdot \exp(\calS_{n}(\delta\tilde{\varphi})(\tilde{z})).
\]
\end{lem}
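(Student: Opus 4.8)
The plan is to obtain the estimate as a direct consequence of the Pre-Bounded Distortion Lemma for open, distance-expanding systems, \cite[Lemma 4.4.2]{MR2656475}, applied to the potential $\delta\tilde{\varphi}$; the only input one needs to supply is that $\delta\tilde{\varphi}$ is a bounded, Lipschitz (in particular H\"older) function on $J(\tilde{f})$.

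\textbf{Step 1: regularity of the potential.} First I would check that $\tilde{\varphi}$ is bounded and Lipschitz on $J(\tilde{f})$. Since $\tilde{f}$ is expanding along fibres, \eqref{3co5RIbc1p} with $n=1$ yields $\inf_{\tilde{z}\in J(\tilde{f})}|(\tilde{f})^{\prime}(\tilde{z})|\ge c\lambda>0$; in particular no point $(\xi,z)\in J(\tilde{f})$ has $z\in\Crit(g_{\xi,1})$, and $\tilde{\varphi}\le-\log(c\lambda)$ on $J(\tilde{f})$. For the upper bound on $|(\tilde{f})^{\prime}|$, recall that $J(\tilde{f})$ is a compact subset of $X(S)\times\C$ (by \eqref{sKJ4fORxv7}) and is completely invariant (by \eqref{l0mSj4Qp58}); hence for $(\xi,z)\in J(\tilde{f})$ we have $g_{\xi,1}(z)=\pi_{2}(\tilde{f}(\xi,z))\in\C$, so $z$ is not a pole of $g_{\xi,1}$. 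As only finitely many rational maps occur as the fibre maps $g_{\xi,1}$ (the system being finitely generated), $|(\tilde{f})^{\prime}(\cdot)|$ is bounded away from $0$ and $\infty$ on $J(\tilde{f})$, so $\tilde{\varphi}\in C(J(\tilde{f}))$ is bounded. Finally, for $\tilde{z}=(\xi,z)$ and $\tilde{z}^{\prime}=(\xi^{\prime},z^{\prime})$ in $J(\tilde{f})$ with $d_{X(S)\times\C}(\tilde{z},\tilde{z}^{\prime})<1/2$ one has $\xi_{1}=\xi_{1}^{\prime}$, so
\[
\tilde{\varphi}(\tilde{z})-\tilde{\varphi}(\tilde{z}^{\prime})=\log|g_{\xi_{1}}^{\prime}(z^{\prime})|-\log|g_{\xi_{1}}^{\prime}(z)|;
\]
since each $z\mapsto\log|g_{\xi_{1}}^{\prime}(z)|$ is real-analytic on a neighbourhood of the compact set $\pi_{2}(J(\tilde{f}))$ that avoids the zeros of $g_{\xi_{1}}^{\prime}$ and the poles of $g_{\xi_{1}}$, and there are finitely many choices of $\xi_{1}$, this difference is bounded by a uniform constant times $|z-z^{\prime}|=d_{X(S)\times\C}(\tilde{z},\tilde{z}^{\prime})$. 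Thus $\tilde{\varphi}$, and hence $\delta\tilde{\varphi}$, is Lipschitz on $J(\tilde{f})$.

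\textbf{Step 2: apply the distortion lemma.} Recalling that $\tilde{f}|_{J(\tilde{f})}$ is an open, distance-expanding map (see the discussion following Definition \ref{EN2fuvjEs1}, and \cite{arimitsu2024}), I would apply \cite[Lemma 4.4.2]{MR2656475} to the H\"older potential $\delta\tilde{\varphi}$ to obtain $\varepsilon_{\mathrm{bd}}>0$ and a constant $C_{\mathrm{bd}}\ge0$ — arising as a convergent geometric series built from the H\"older seminorm of $\delta\tilde{\varphi}$ and the backward contraction ratio along Bowen balls — such that for all $n\in\N$, all $\tilde{z}\in J(\tilde{f})$ and all $\tilde{w}\in B_{n}(\tilde{z},\varepsilon_{\mathrm{bd}})$,
\[
\bigl|\calS_{n}(\delta\tilde{\varphi})(\tilde{w})-\calS_{n}(\delta\tilde{\varphi})(\tilde{z})\bigr|\le C_{\mathrm{bd}}.
\]
Taking the supremum over $\tilde{w}\in B_{n}(\tilde{z},\varepsilon_{\mathrm{bd}})$ and exponentiating gives the claim with $D_{\mathrm{bd}}:=e^{C_{\mathrm{bd}}}\ge1$.

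The hard part is Step 1: one must rule out critical points and poles of the fibre maps on $J(\tilde{f})$ so that $\tilde{\varphi}$ is a genuine bounded Lipschitz potential — this is precisely where expansion along fibres and the standing assumption $J(S)\subset\C$ (together with the finiteness of $\Gamma(S)$) are used. After that, the conclusion is a black-box application of the Przytycki--Urba\'nski bounded-distortion machinery, and the statement is robust to the exact normalisation of constants.
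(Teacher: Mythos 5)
Your proof is correct and follows essentially the same route as the paper, which simply derives the lemma from the Pre-Bounded Distortion Lemma \cite[Lemma 4.4.2]{MR2656475}. Your Step 1 merely fills in the (routine) verification, left implicit in the paper, that $\delta\tilde{\varphi}$ is a bounded H\"older potential on $J(\tilde{f})$.
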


\begin{lem}\label{UjXdpkDB0w}
For every $0<\varepsilon\leq 1$, there exists $N_{\mathrm{cov}}(\varepsilon) \in \N$ with the following property:
for any $n \in \N$,  $\xi \in X^{n}(S)$ and  $U \in \Comp(\xi, R)$,
there exist points $\tilde{z}_{\xi, U,1},\dots,\tilde{z}_{\xi, U,N} \in \tilde{B}_{\xi, U}(R)$
for some $1\leq N\leq N_{\mathrm{cov}}(\varepsilon)$ such that
\[
\tilde{B}_{\xi, U}(R) =
\left([\xi]\times \gamma_{\xi, U}(\sfH_{\bt(\xi)}(R))\right)\cap J(\tilde{f})  \subset \bigcup_{m=1}^{N} B_{n}(\tilde{z}_{\xi, U,m},\varepsilon).
\]
\end{lem}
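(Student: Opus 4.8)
The plan is to produce, for every $n\in\N$, $\xi\in X^{n}(S)$ and $U\in\Comp(\xi, R)$, a finite cover of $\tilde{B}_{\xi, U}(R)$ by sets of $d_{n}$-diameter strictly smaller than $\varepsilon$, the number of which is bounded by a constant $N_{\mathrm{cov}}(\varepsilon)$ independent of $n$, $\xi$ and $U$. Choosing one point of $\tilde{B}_{\xi, U}(R)$ in each nonempty cover element then furnishes the required points $\tilde{z}_{\xi, U, m}$, since a set of $d_{n}$-diameter $<\varepsilon$ is contained in the Bowen ball $B_{n}(\,\cdot\,,\varepsilon)$ about any of its points. Recall that $\tilde{B}_{\xi, U}(R)=\bigl([\xi]\times\gamma_{\xi, U}(\sfH_{\bt(\xi)}(R))\bigr)\cap J(\tilde{f})$, so a point of it has the form $(\eta, z)$ with $\eta\in[\xi]$ and $z=\gamma_{\xi, U}(u)$, where $u:=g_{\xi}(z)\in\sfH_{\bt(\xi)}(R)\subset D(y_{\bt(\xi)}, R)$; thus I must control $d_{X(S)\times\C}\bigl(\tilde{f}^{k}(\eta, z),\tilde{f}^{k}(\eta', z')\bigr)$ in both coordinates, for every $0\le k<n$ and every pair of such points.

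The analytic input is two facts. Write $\xi_{>k}:=(\xi_{k+1},\dots,\xi_{n})\in X^{n-k}(S)$ (so $\xi_{>0}=\xi$), whence $g_{\xi}=g_{\xi_{>k}}\circ g_{\xi|k}$. First, $g_{\xi|k}\circ\gamma_{\xi, U}$ is again an inverse branch of $g_{\xi_{>k}}$ on $D(y_{\bt(\xi)}, 2R)$ --- the relevant branches exist thanks to $\overline{D(y_{\bt(\xi)}, 2R)}^{\C}\cap P_{\bt(\xi)}(S)=\varnothing$ --- so $g_{\xi|k}\circ\gamma_{\xi, U}=\gamma_{\xi_{>k}, U_{k}}$ for $U_{k}:=g_{\xi|k}(U)\in\Comp(\xi_{>k}, R)$, and hence $\pi_{2}\bigl(\tilde{f}^{k}(\eta, z)\bigr)=g_{\xi|k}(z)=\gamma_{\xi_{>k}, U_{k}}(u)$ whenever $\eta|n=\xi$. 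Second, since $\tilde{z}_{\xi, U}\in J(\tilde{f})$ we get $\tilde{f}^{k}(\tilde{z}_{\xi, U})=\bigl(\xi_{>k}\kappa_{\bt(\xi)},\gamma_{\xi_{>k}, U_{k}}(y_{\bt(\xi)})\bigr)\in J(\tilde{f})$ by \eqref{l0mSj4Qp58}, so \eqref{3co5RIbc1p} applied at time $n-k$ gives $\bigl|(g_{\xi_{>k}})'\bigl(\gamma_{\xi_{>k}, U_{k}}(y_{\bt(\xi)})\bigr)\bigr|\ge c\lambda^{n-k}$. Combining this with Proposition \ref{ekoebe1} at $t=1/2$ (applied to the univalent map $\gamma_{\xi_{>k}, U_{k}}$ on $D(y_{\bt(\xi)}, 2R)$) and the chain rule for the inverse branch shows that $\gamma_{\xi_{>k}, U_{k}}$ is Lipschitz on the convex disc $D(y_{\bt(\xi)}, R)$ with constant at most $\bsk(1/2)\bigl|(g_{\xi_{>k}})'\bigl(\gamma_{\xi_{>k}, U_{k}}(y_{\bt(\xi)})\bigr)\bigr|^{-1}\le\bsk(1/2)\,(c\lambda^{n-k})^{-1}\le\bsk(1/2)\,(c\lambda)^{-1}$, a bound independent of $n$, $k$, $\xi$ and $U$ (here $n-k\ge1$).

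Now fix $\varepsilon\in(0,1]$ and build the cover by refining the two coordinates. Put $L:=\lceil\log_{2}(1/\varepsilon)\rceil$ and partition $[\xi]$ into the cylinders $[\xi\tau]$ over words $\tau$ with $|\tau|=L$ and $\xi\tau\in X^{n+L}(S)$; there are at most $(\#I)^{L}$ of them, and if $\eta,\eta'\in[\xi\tau]$ then, for every $k<n$, $\sigma^{k}\eta$ and $\sigma^{k}\eta'$ agree on their first $n+L-k\ge L+1$ symbols, so $d_{X(S)}(\sigma^{k}\eta,\sigma^{k}\eta')\le 2^{-(L+2)}<\varepsilon$. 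Next put $r:=\min\bigl\{R,\ \varepsilon c\lambda/(4\,\bsk(1/2))\bigr\}$ and cover $D(y_{\bt(\xi)}, R)$ by at most $N_{2}(\varepsilon)$ discs of radius $r$ (their number depends only on $R/r$, hence only on $\varepsilon$); intersecting these with $\sfH_{\bt(\xi)}(R)$ yields pieces $P_{1},\dots,P_{N_{2}}$ that cover $\sfH_{\bt(\xi)}(R)$, and for $u,u'\in P_{m}$ the Lipschitz bound above gives $\bigl|\gamma_{\xi_{>k}, U_{k}}(u)-\gamma_{\xi_{>k}, U_{k}}(u')\bigr|\le\bsk(1/2)\,(c\lambda)^{-1}\cdot 2r\le\varepsilon/2<\varepsilon$ for every $k<n$. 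Setting $W_{\tau, m}:=\bigl([\xi\tau]\times\gamma_{\xi, U}(P_{m})\bigr)\cap J(\tilde{f})$, the sets $W_{\tau, m}$ cover $\tilde{B}_{\xi, U}(R)$, each has $d_{n}$-diameter $<\varepsilon$ by the two estimates just displayed, and there are at most $N_{\mathrm{cov}}(\varepsilon):=(\#I)^{L}\,N_{2}(\varepsilon)$ of them. Discarding the empty ones --- at least one is nonempty, e.g.\ the one containing $\tilde{z}_{\xi, U}$ --- and choosing a point $\tilde{z}_{\xi, U, m}\in W_{\tau, m}$ in each completes the proof, with $1\le N\le N_{\mathrm{cov}}(\varepsilon)$.

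The step I expect to be the main obstacle is the uniform derivative lower bound of the second paragraph, namely $\bigl|(g_{\xi_{>k}})'\bigl(\gamma_{\xi_{>k}, U_{k}}(y_{\bt(\xi)})\bigr)\bigr|\ge c\lambda^{n-k}$ for every intermediate time $k$. One must recognise that the natural anchor $\tilde{f}^{k}(\tilde{z}_{\xi, U})$ remains in $J(\tilde{f})$, so that the expanding-along-fibres hypothesis \eqref{3co5RIbc1p} may be invoked, and that it retains the special form $(\text{finite word})\,\kappa_{\ast}$ with second coordinate exactly the inverse-branch image of the corresponding hole centre $y_{\bt(\xi)}$; this is also what makes the Koebe distortion estimate bite at the right point. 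Once this identity and the distortion estimate are secured, the assembly of the cover and the counting are routine and, crucially, uniform in $n$, $\xi$ and $U$.
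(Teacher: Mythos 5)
Your proposal is correct and follows essentially the same route as the paper's proof: a product cover by depth-$L$ cylinders $[\xi\tau]$ in the symbolic coordinate and small pieces of $\sfH_{\bt(\xi)}(R)$ in the fibre coordinate, with the uniform Lipschitz bound on the intermediate maps $g_{\xi|k}\circ\gamma_{\xi,U}$ obtained exactly as in the paper from the expanding-along-fibres estimate at the anchor $\tilde{f}^{k}(\tilde{z}_{\xi,U})\in J(\tilde{f})$ together with the Koebe distortion theorem. The only cosmetic differences (identifying $g_{\xi|k}\circ\gamma_{\xi,U}$ as an inverse branch of the tail word, counting cylinders by $(\#I)^{L}$ rather than $\max_i\#X_i^L(S)$, and covering the full disc $D(y_{\bt(\xi)},R)$ rather than the hole itself) do not change the argument.
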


\begin{proof}
Fix $0<\varepsilon\leq 1$.
Choose $L=L(\varepsilon) \in \N$ such that $2^{-(L+1)}<\varepsilon/2$.
Since $S$ is finitely generated, we have 
\[
K(\varepsilon):=\max_{i \in V}\#X_{i}^{L}(S)<\infty.
\]

Fix $n \in \N$ and $\xi \in X^{n}(S)$.
Then, we have  $[\xi] \subset \bigcup_{\tau\in X_{\bt(\xi)}^{L}(S)} [\xi\tau]$. 
Moreover, for any $\eta,\eta^{\prime} \in [\xi\tau]$ and any $0\leq k<n$ we have
\[
d_{X(S)} \left(\sigma^{k}(\eta),\sigma^{k}(\eta^{\prime})\right)\leq 2^{-(L+1)}<\varepsilon/2.
\]
We now additionally fix $U\in \Comp(\xi, R)$.
For an integer $k \in \{0, \ldots, n\}$, we define a univalent function
\[
F_k:=g_{\xi|k}\circ \gamma_{\xi, U}:D(y_{\bt(\xi)},2R)\to \C,
\]
where $g_{\xi|0}:=\id$.
Let $h_{k}:=g_{\xi, n}\circ\cdots\circ g_{\xi, k + 1}$, where  $h_n:=\id$.

We show that each $F_k$ is Lipschitz on $D(y_{\bt(\xi)}, R)$. 
Let $\tilde{z}_{\xi, U}$ be as in Notation \ref{u58n7Yc3qy}. 
Then, by \eqref{l0mSj4Qp58}, we have $\tilde{f}^k(\tilde{z}_{\xi, U}) \in J(\tilde{f})$ for all $k \in \N_{0}$.
For each $k \in \{0, \ldots, n - 1\}$, we have
\[
|h_{k}^{\prime}(F_k(y_{\bt(\xi)}))|
=
\left|(\tilde{f}^{ n-k})^{\prime}(\tilde{f}^{k}(\tilde{z}_{\xi, U}))\right|
\geq c \lambda^{n-k},
\]
hence
\[
|F_{k}^{\prime}(y_{\bt(\xi)})|
=
\frac{1}{|h_{k}^{\prime}(F_k(y_{\bt(\xi)}))|}
\leq \frac{1}{c \lambda^{n-k}}
\leq \frac{1}{c \lambda}.
\]
For $k=n$, we have $F_n=\id$, so $|F_n^{\prime}(y_{\bt(\xi)})|=1$.
Therefore, for all $k \in \{0, \ldots, n\}$,
\[
|F_{k}^{\prime}(y_{\bt(\xi)})|\leq C_0:=\max\left\{1,\frac{1}{c\lambda}\right\}.
\]
By Proposition \ref{ekoebe1}, we have 
\[
\sup_{u\in D(y_{\bt(\xi)}, R)} |F_{k}^{\prime}(u)|
\leq \bsk(1/2) \inf_{u\in D(y_{\bt(\xi)}, R)}|F_{k}^{\prime}(u)|
\leq \bsk(1/2) |F_{k}^{\prime}(y_{\bt(\xi)})|
\leq \bsk(1/2) C_0
\]
for all $k \in \{0, \ldots, n\}$.
Hence each $F_k$ is $L_{\mathrm{Lip}}:=\bsk(1/2)C_{0}$-Lipschitz on $D(y_{\bt(\xi)}, R)$.

Choose $r>0$ such that $2L_{\mathrm{Lip}} r<\varepsilon/2$.
Since each $\sfH_{i}(R) \subset D(y_{i}, R)$ is totally bounded and $V$ is finite,
for each $j\in V$ there exist $u_{j,1},\dots,u_{j,M_j(\varepsilon)} \in \sfH_j(R)$ such that
\[
\sfH_j(R)\subset \bigcup_{\ell=1}^{M_j(\varepsilon)}D(u_{j,\ell}, r).
\]
Put $M(\varepsilon):=\max_{j\in V}M_j(\varepsilon)$.
For $\tau\in X_{\bt(\xi)}^L(S)$ and $\ell\in \{1,\dots,M_{\bt(\xi)}(\varepsilon)\}$, we define
\[
A_{\tau,\ell}:=
\left([\xi\tau]\times \gamma_{\xi, U}(\sfH_{\bt(\xi)}(R)\cap D(u_{\bt(\xi),\ell}, r))\right)\cap J(\tilde{f}).
\]
Then, we have $\tilde{B}_{\xi, U}(R) \subset \bigcup_{\tau\in X_{\bt(\xi)}^L(S)}\bigcup_{\ell\in \{1,\dots,M(\varepsilon)\}} A_{\tau,\ell}$.

If $A_{\tau,\ell}=\varnothing$, then $A_{\tau,\ell}\subset B_{n}(\tilde{z},\varepsilon)$ holds trivially for any $\tilde{z} \in J(\tilde{f})$.
Hence, we take an arbitrary point $\tilde{z}_{\tau,\ell} \in A_{\tau,\ell} \neq \varnothing$ and claim that $A_{\tau,\ell}\subset B_{n}(\tilde{z}_{\tau,\ell},\varepsilon)$.
Take $\tilde{z}=(\eta, z)$, $\tilde{z}^{\prime}=(\eta^{\prime}, z^{\prime}) \in A_{\tau,\ell}$.
Then $\eta,\eta^{\prime} \in [\xi\tau]$, hence for all $k \in \{0, \ldots, n - 1\}$,
\[
d_{X(S)}\left(\sigma^{k}(\eta),\sigma^{k}(\eta^{\prime})\right)<\varepsilon/2.
\]
Also, since $z, z^{\prime} \in \gamma_{\xi, U}(\sfH_{\bt(\xi)}(R))$, we have $u:=g_\xi(z) \in \sfH_{\bt(\xi)}(R)$, $v:=g_\xi(z^{\prime}) \in \sfH_{\bt(\xi)}(R)$.
Moreover $u,v\in D(u_{\bt(\xi),\ell},r)$, so $d_{\C}(u, v)<2r$.
For $k \in \{0, \ldots, n - 1\}$, using $\eta,\eta^{\prime} \in [\xi]$ we have
\[
\pi_{2}(\tilde{f}^{ k}(\tilde{z}))=g_{\xi|k}(z)=F_k(u),
\qquad
\pi_{2}(\tilde{f}^{ k}(\tilde{z}^{\prime}))=g_{\xi|k}(z^{\prime})=F_k(v),
\]
and thus
\[
d_{\C}\left(\pi_{2}(\tilde{f}^{ k}(\tilde{z})),\pi_{2}(\tilde{f}^{ k}(\tilde{z}^{\prime}))\right)
=
d_{\C}(F_k(u), F_k(v))
\leq L_{\mathrm{Lip}} d_{\C}(u, v)
< 2L_{\mathrm{Lip}} r
<\varepsilon/2.
\]
Hence for all $0\leq k<n$,
\[
d_{X(S)\times\C}\left(\tilde{f}^{ k}(\tilde{z}),\tilde{f}^{ k}(\tilde{z}^{\prime})\right)
=
\max\left\{
d_{X(S)}(\sigma^{k}(\eta),\sigma^{k}(\eta^{\prime})),
 d_{\C}\left(\pi_{2}(\tilde{f}^{ k}(\tilde{z})),\pi_{2}(\tilde{f}^{ k}(\tilde{z}^{\prime}))\right)
\right\}
<\varepsilon,
\]
so $d_n(\tilde{z},\tilde{z}^{\prime})<\varepsilon$, proving $A_{\tau,\ell}\subset B_{n}(\tilde{z}_{\tau,\ell},\varepsilon)$.

Finally, since we have 
$\#X_{\bt(\xi)}^{L}(S)\cdot M(\varepsilon)\leq K(\varepsilon) M(\varepsilon)$, the set $\tilde{B}_{\xi, U}(R)$ is covered by at most $N_{\mathrm{cov}}(\varepsilon):=K(\varepsilon) M(\varepsilon)$
Bowen balls of the form $B_{n}(\cdot,\varepsilon)$ with centers in $\tilde{B}_{\xi, U}(R)$. 
The proof is complete.
\end{proof}

\begin{prp}\label{vGhPS8pfoC}
Let $S=(V, E, (\Gamma_{e})_{e \in E})$ be a finitely generated, irreducible rational GDMS.
Assume that the associated skew product $\tilde{f}$ is expanding along fibres and topologically transitive on $J(\tilde{f})$.
Then, we have
\[
\overline{\calCP}\left(\tilde{f}|_{J(\tilde{f})}, \delta\tilde{\varphi},\widetilde{\sfHP}(R)\right)
=
\overline{\Press^{\mathrm{geom}}}(\delta, R).
\]
\end{prp}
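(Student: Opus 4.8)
The plan is to prove the two inequalities $\le$ and $\ge$ separately, working at scales $\varepsilon$ small enough for the distortion and separation estimates to apply; since both sides are limits as $\varepsilon\to0$, this suffices, and in fact $\overline{\calCP}(\tilde f|_{J(\tilde f)},\delta\tilde\varphi,\widetilde{\sfHP}(R),\varepsilon)$ will turn out to be constant for all sufficiently small $\varepsilon$. The preliminary point I would record first is the set equality
\[
\widetilde{\sfHP}_{n}(R)=\bigcup_{\xi\in X^{n}(S)}\ \bigcup_{U\in\Comp(\xi,R)}\tilde{B}_{\xi,U}(R),
\]
valid for every $n\in\N$. The inclusion $\supseteq$ is the inclusion $\tilde B_{\xi,U}(R)\subset\widetilde{\sfHP}_n(R)$ already established; for $\subseteq$ one unwinds $\tilde f^{-n}(\widetilde{\sfH}(R))$, noting that if $(\eta,z)$ lies in it then $g_{\eta|n}(z)\in\sfH_{\bt(\eta|n)}(R)$, so $z$ lies in some component $U\in\Comp(\eta|n,R)$ and $z=\gamma_{\eta|n,U}(g_{\eta|n}(z))\in\gamma_{\eta|n,U}(\sfH_{\bt(\eta|n)}(R))$, while $(\eta,z)\in J(\tilde f)$ by complete invariance \eqref{l0mSj4Qp58}, hence $(\eta,z)\in\tilde B_{\eta|n,U}(R)$. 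I would also record here that $(\xi,U)\mapsto\tilde z_{\xi,U}$ is injective---the first $n$ symbols of $\xi\kappa_{\bt(\xi)}$ recover $\xi$, and $\gamma_{\xi,U}(y_{\bt(\xi)})\in U$ recovers $U$---so that $\sum_{\tilde z_{\xi,U}\in\mathcal R_n}\mathrm{w}_\delta(\xi,U)=Z_n^{\mathrm{geom}}(\delta,R)$, and that $\mathcal R_n\subseteq\widetilde{\sfHP}_n(R)$.

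For the upper bound, I would fix $0<\varepsilon\le\min\{1,\varepsilon_{\mathrm{bd}}\}$ and, using Lemma \ref{UjXdpkDB0w}, cover each $\tilde B_{\xi,U}(R)$ by at most $N_{\mathrm{cov}}(\varepsilon)$ Bowen balls $B_n(\tilde z_{\xi,U,m},\varepsilon)$ with centres $\tilde z_{\xi,U,m}\in\tilde B_{\xi,U}(R)$. By the set equality these (finitely many, as $\sum_{\xi\in X^n(S)}\#\Comp(\xi,R)\le N_n$) balls cover $\widetilde{\sfHP}_n(R)$, so their centres form an element of $\mathcal C_n(\widetilde{\sfHP}_n(R),\varepsilon)$. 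For each centre, Lemma \ref{L3EQBJ1CFR} gives $\exp(\sup_{B_n(\tilde z_{\xi,U,m},\varepsilon)}\calS_n(\delta\tilde\varphi))\le D_{\mathrm{bd}}\exp(\calS_n(\delta\tilde\varphi)(\tilde z_{\xi,U,m}))$, and Lemma \ref{7xObqtHg0Y}, applicable since $\tilde z_{\xi,U,m}\in\tilde B_{\xi,U}(R)$, bounds the right-hand side by $D_{\mathrm{bd}}K_{\mathrm{koe}}\,\mathrm{w}_\delta(\xi,U)$. Summing over all $(\xi,U,m)$ gives $\Lambda_n(\widetilde{\sfHP}_n(R),\delta\tilde\varphi,\varepsilon)\le N_{\mathrm{cov}}(\varepsilon)D_{\mathrm{bd}}K_{\mathrm{koe}}\,Z_n^{\mathrm{geom}}(\delta,R)$; applying $\tfrac1n\log$, then $\limsup_{n\to\infty}$, then $\varepsilon\to0$ yields $\overline{\calCP}(\tilde f|_{J(\tilde f)},\delta\tilde\varphi,\widetilde{\sfHP}(R))\le\overline{\Press^{\mathrm{geom}}}(\delta,R)$.

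For the lower bound, I would fix $0<\varepsilon\le\varepsilon_{\mathrm{rep}}$ and take an arbitrary $E\in\mathcal C_n(\widetilde{\sfHP}_n(R),\varepsilon)$. Since $\mathcal R_n\subseteq\widetilde{\sfHP}_n(R)$, each point of $\mathcal R_n$ lies in some $B_n(x,\varepsilon)$ with $x\in E$; because $B_n(x,\varepsilon)\subseteq B_n(x,\varepsilon_{\mathrm{rep}})$ meets $\mathcal R_n$ in at most one point by Lemma \ref{iQ3l1o1cKC}, assigning such an $x$ to each point of $\mathcal R_n$ defines an injection $\Phi\colon\mathcal R_n\hookrightarrow E$. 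For $\tilde z_{\xi,U}\in\mathcal R_n$ we have $\tilde z_{\xi,U}\in B_n(\Phi(\tilde z_{\xi,U}),\varepsilon)$, so $\exp(\sup_{B_n(\Phi(\tilde z_{\xi,U}),\varepsilon)}\calS_n(\delta\tilde\varphi))\ge\exp(\calS_n(\delta\tilde\varphi)(\tilde z_{\xi,U}))\ge K_{\mathrm{koe}}^{-1}\mathrm{w}_\delta(\xi,U)$ by Lemma \ref{7xObqtHg0Y}. Summing along $\Phi$, with all terms nonnegative,
\[
\sum_{x\in E}\exp\Bigl(\sup_{B_n(x,\varepsilon)}\calS_n(\delta\tilde\varphi)\Bigr)\ \ge\ K_{\mathrm{koe}}^{-1}\sum_{\tilde z_{\xi,U}\in\mathcal R_n}\mathrm{w}_\delta(\xi,U)\ =\ K_{\mathrm{koe}}^{-1}Z_n^{\mathrm{geom}}(\delta,R);
\]
as $E$ was arbitrary, $\Lambda_n(\widetilde{\sfHP}_n(R),\delta\tilde\varphi,\varepsilon)\ge K_{\mathrm{koe}}^{-1}Z_n^{\mathrm{geom}}(\delta,R)$, and $\tfrac1n\log$, $\limsup_n$, $\varepsilon\to0$ give the reverse inequality. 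Together the two bounds establish $\overline{\calCP}(\tilde f|_{J(\tilde f)},\delta\tilde\varphi,\widetilde{\sfHP}(R))=\overline{\Press^{\mathrm{geom}}}(\delta,R)$.

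The hard part will be the lower bound: for an \emph{arbitrary} economical cover $E$ one must extract enough distinct elements of $E$ to carry essentially the whole mass $Z_n^{\mathrm{geom}}(\delta,R)$, and it is precisely the ``intersects $\mathcal R_n$ in at most one point'' clause of Lemma \ref{iQ3l1o1cKC} (itself resting on the separation Lemma \ref{GSCyI6vVWN}) that makes this possible, so the genuine difficulty has already been discharged in those lemmas; what remains is merely choosing $\varepsilon$ below both $\varepsilon_{\mathrm{bd}}$ and $\varepsilon_{\mathrm{rep}}$ and the bookkeeping identity $\sum_{\tilde z\in\mathcal R_n}\mathrm{w}_\delta=Z_n^{\mathrm{geom}}$ secured by the injectivity above. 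Neither the VSC nor topological transitivity enters the argument proper---transitivity is used only to ensure the Bowen parameter $\delta$ in the statement is well defined.
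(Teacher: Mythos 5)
Your proposal is correct and follows essentially the same route as the paper: the same covering of $\widetilde{\sfHP}_{n}(R)$ by the sets $\tilde{B}_{\xi,U}(R)$, the same use of Lemmas \ref{UjXdpkDB0w}, \ref{L3EQBJ1CFR} and \ref{7xObqtHg0Y} for the upper bound, and the same use of the representative set $\mathcal{R}_{n}$ with the separation Lemma \ref{iQ3l1o1cKC} for the lower bound. The only (harmless) deviations are that you invoke Lemma \ref{7xObqtHg0Y} with the constant $K_{\mathrm{koe}}^{-1}$ in the lower bound where the paper uses the exact identity $\exp(\calS_{n}(\delta\tilde{\varphi})(\tilde{z}_{\xi,U}))=\mathrm{w}_{\delta}(\xi,U)$, and that you make explicit two points the paper leaves implicit, namely $\mathcal{R}_{n}\subset\widetilde{\sfHP}_{n}(R)$ and the injectivity of $(\xi,U)\mapsto\tilde{z}_{\xi,U}$.
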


\begin{proof}
Put $\varepsilon_0:=\min\{\varepsilon_{\mathrm{bd}},1,\varepsilon_{\mathrm{rep}}\}$.
Fix $n \in \N$.
For any $\xi \in X^{n}(S)$, $U \in \Comp(\xi, R)$, by Lemma \ref{UjXdpkDB0w} there exist $\tilde{z}_{\xi, U,1},\dots,\tilde{z}_{\xi, U,N} \in \tilde{B}_{\xi, U}(R)$ such that 
\[
\tilde{B}_{\xi, U}(R) \subset \bigcup_{m=1}^{N_{\mathrm{cov}}(\varepsilon_0)} B_{n}(\tilde{z}_{\xi, U,m},\varepsilon_0). 
\]
Noting the definition of $\widetilde{\sfHP}_{n}(R)$ given in \eqref{sqBBlOyDPn}, we also have 
\[
\widetilde{\sfHP}_{n}(R)
\subset
\bigcup_{\xi \in X^{n}(S)} \bigcup_{U \in \Comp(\xi, R)}  \tilde{B}_{\xi, U}(R)
\]
Indeed, for $(\omega, z) \in \widetilde{\sfHP}_{n}(R)$, we have $g_\omega(z) \in \sfH_{\bt(\omega)}(R)\subset D(y_{\bt(\omega)},R)$, hence $z$ belongs to a unique component $U\in \Comp(\omega,R)$,
and the inverse branch $\gamma_{\omega, U}$ is well-defined on $D(y_{\bt(\omega)},2R)$.
Hence, 
\[
\Lambda_n(\widetilde{\sfHP}_{n}(R),\delta\tilde{\varphi},\varepsilon_0)
\le
\sum_{\xi \in X^{n}(S)} \sum_{U \in \Comp(\xi, R)} \sum_{m=1}^{N_{\mathrm{cov}}(\varepsilon_0)}
\exp\left(\sup_{\tilde{y} \in B_{n}(\tilde{z}_{\xi, U,m},\varepsilon_0)} \calS_{n}(\delta\tilde{\varphi})(\tilde{y})\right).
\]
By Lemma \ref{L3EQBJ1CFR},
\[
\exp\left(\sup_{\tilde{y} \in B_{n}(\tilde{z}_{\xi, U,m},\varepsilon_0)} \calS_{n}(\delta\tilde{\varphi})(\tilde{y})\right)
\le
D_{\mathrm{bd}}\cdot \exp(\calS_{n}(\delta\tilde{\varphi})(\tilde{z}_{\xi, U,m}));
\]
since $\tilde{z}_{\xi, U,m} \in \tilde{B}_{\xi, U}(R)$, Lemma \ref{7xObqtHg0Y} yields
$\exp(\calS_{n}(\delta\tilde{\varphi})(\tilde{z}_{\xi, U,m}))\leq K_{\mathrm{koe}} \mathrm{w}_{\delta}(\xi, U)$.
Therefore, we obtain 
\begin{align}
\Lambda_n(\widetilde{\sfHP}_{n}(R),\delta\tilde{\varphi},\varepsilon_0)
&\notag \le
D_{\mathrm{bd}} K_{\mathrm{koe}} N_{\mathrm{cov}}(\varepsilon_0) 
\sum_{\xi \in X^{n}(S)} \sum_{U \in \Comp(\xi, R)}\mathrm{w}_{\delta}(\xi, U)\\
&\label{i26X1C6icp} =
D_{\mathrm{bd}} K_{\mathrm{koe}} N_{\mathrm{cov}}(\varepsilon_0) Z_n^{\mathrm{geom}}(\delta, R).
\end{align}

Next, take an arbitrary at most countable set
$E=\{\tilde{z}\}_{\tilde{z} \in E}\subset J(\tilde{f})$
such that
$\widetilde{\sfHP}_{n}(R) \subset \bigcup_{\tilde{z} \in E} B_{n}(\tilde{z},\varepsilon_0)$.
Since $\varepsilon_0\leq \varepsilon_{\mathrm{rep}}$, Lemma \ref{iQ3l1o1cKC} implies each ball
intersects $\mathcal{R}_{n}$ in at most one point. Hence,
\[
\sum_{\tilde{z} \in E} \exp\left(\sup_{\tilde{y} \in B_{n}(\tilde{z},\varepsilon_0)}
\calS_{n}(\delta\tilde{\varphi})(\tilde{y})\right)
\ge
\sum_{\tilde{z} \in \mathcal{R}_{n}} \exp(\calS_{n}(\delta\tilde{\varphi})(\tilde{z})).
\]
Observe that for $\xi \in X^{*}(S)$ and $U\in \Comp(\xi, R)$, we have 
\[
\exp(\calS_{n}(\delta\tilde{\varphi})(\tilde{z}_{\xi, U}))
=|(g_\xi)^{\prime}(\gamma_{\xi, U}(y_{\bt(\xi)}))|^{-\delta} 
=|(\gamma_{\xi, U})^{\prime}(y_{\bt(\xi)})|^{\delta} = \mathrm{w}_{\delta}(\xi, U)
\]
and thus
\[
\sum_{\tilde{z} \in \mathcal{R}_{n}} \exp(\calS_{n}(\delta\tilde{\varphi})(\tilde{z}))
=
\sum_{\xi \in X^{n}(S)} \sum_{U \in \Comp(\xi, R)}\mathrm{w}_{\delta}(\xi, U)
=
Z_n^{\mathrm{geom}}(\delta, R).
\]
Taking the infimum over all Bowen covers yields
\begin{equation}\label{RDVyNQstlu}
\Lambda_n(\widetilde{\sfHP}_{n}(R),\delta\tilde{\varphi},\varepsilon_0)\ \ge\ Z_n^{\mathrm{geom}}(\delta, R).
\end{equation}
Setting $C_{\mathrm{cap}}
:=
\max\left\{1,\ D_{\mathrm{bd}} K_{\mathrm{koe}} N_{\mathrm{cov}}(\varepsilon_0)\right\}$ and combining \eqref{i26X1C6icp} and \eqref{RDVyNQstlu}, 
we obtain
\[
C_{\mathrm{cap}}^{-1} Z_n^{\mathrm{geom}}(\delta, R)
\le
\Lambda_n(\widetilde{\sfHP}_{n}(R),\delta\tilde{\varphi},  \varepsilon_0)
\le
C_{\mathrm{cap}} Z_n^{\mathrm{geom}}(\delta, R), 
\]
which implies 
\[
\overline{\calCP}\left(\tilde{f},\delta\tilde{\varphi},\widetilde{\sfHP}(R), \varepsilon_0\right)
=
\overline{\Press^{\mathrm{geom}}}(\delta, R).
\]

For any $0<\varepsilon\le\varepsilon_0$, the same lower bound holds since $\varepsilon\leq \varepsilon_{\mathrm{rep}}$,
and the upper bound holds with $N_{\mathrm{cov}}(\varepsilon)$ in place of $N_{\mathrm{cov}}(\varepsilon_0)$.
Hence for all $0<\varepsilon\le\varepsilon_0$, we have 
$\overline{\calCP}\left(\tilde{f},\delta\tilde{\varphi},\widetilde{\sfHP}(R), \varepsilon\right)
=
\overline{\Press^{\mathrm{geom}}}(\delta, R)$.
Finally, Definition \ref{BphPCB0UAa} implies
$\overline{\calCP}\left(\tilde{f},\delta\tilde{\varphi},\widetilde{\sfHP}(R)\right)
=
\overline{\Press^{\mathrm{geom}}}(\delta, R)$.
\end{proof}

\section{Conformal preimage decay exponent}
Throughout this section, we assume the standing assumption given in the previous section.
Recall the definitions of $J_{i}(\tilde{f})$ in \eqref{h6uKK2bqKG} and the conformal measure given in Proposition \ref{tg3o75180I5}.
For each $i \in V$, we define the finite Borel measure $\nu_{\delta,i}$ on $J_{i}(S)$ given by 
$\nu_{\delta,i}:=(\pi_{2}|_{J_i(\tilde{f})})_*(\tilde\nu_{\delta}|_{J_i(\tilde{f})})$.

\begin{prp}[{\cite{arimitsu2024}}]\label{EwoyAkFqwq}
Assume that $S$ satisfies the VSC, and that the associated skew product map $\tilde{f} \colon X(S)\times\rsp\circlearrowleft$ is expanding along fibres and topologically transitive on $J(\tilde{f})$.
Then, there exist constants $C_{\mathrm{Ahl}}\geq 1$ and $r_0>0$ such that 
\begin{enumerate}[label=(\roman*)]
\item
for every $z\in J(S)$ and every $0<r<r_0$,
\[
C_{\mathrm{Ahl}}^{-1} r^{\delta}
 \leq 
\nu_{\delta} \left(D(z, r)\cap J(S)\right)
 \leq 
C_{\mathrm{Ahl}}  r^{\delta};
\]
\item 
for every $i \in V$, every $z\in J_{i}(S)$ and every $0<r<r_0$,
\[
C_{\mathrm{Ahl}}^{-1} r^{\delta}
 \leq 
\nu_{\delta,i} \left(D(z, r)\cap J_{i}(S)\right)
 \leq 
C_{\mathrm{Ahl}}  r^{\delta}.
\]
\end{enumerate}
\end{prp}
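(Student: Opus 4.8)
The plan is to deduce (i) from (ii) and to prove (ii) by the standard pull‑back‑to‑a‑fixed‑scale technique for conformal measures of expanding maps, the one new feature being that the infinitely many fibres of $\tilde{f}$ must be controlled, which is where the VSC enters. For the reduction: since $X(S)=\bigsqcup_{i\in V}X_i(S)$ with each $X_i(S)$ clopen, Proposition \ref{wPJN4oayEL} gives $J_i(\tilde{f})=J(\tilde{f})\cap(X_i(S)\times\C)$ and hence $J(\tilde{f})=\bigsqcup_{i\in V}J_i(\tilde{f})$, so $\nu_\delta=\sum_{i\in V}\nu_{\delta,i}$ on $J(S)$ with $\nu_{\delta,i}$ carried by $J_i(S)$; granting (ii), one bounds $\nu_{\delta,i}(D(z,r)\cap J_i(S))$ by $0$ when it is empty and otherwise by $\nu_{\delta,i}(D(z',2r)\cap J_i(S))$ for a nearby $z'\in J_i(S)$, and summing over the finitely many $i\in V$ gives (i). The tools for (ii) are: $\tilde\nu_\delta$ is $|(\tilde{f})'|^{\delta}$‑conformal (Proposition \ref{tg3o75180I5}); $\tilde{f}|_{J(\tilde{f})}$ is open distance‑expanding with the lower bound \eqref{3co5RIbc1p}, so $M_0:=\max\{1,\sup_{J(\tilde{f})}|(\tilde{f})'|\}<\infty$; the Koebe estimates (Propositions \ref{ekoebe1}, \ref{ekoebe2}) give bounded distortion of inverse branches; and hyperbolicity makes the compacta $J_j(S)$ and $P_j(S)$ disjoint, so there is $r_0>0$ with $D(w,2r_0)\cap P_j(S)=\varnothing$ for all $j\in V$ and $w\in J_j(S)$, whence for every $h\in H^{j}(S)$ the relevant inverse branch of $h$ is defined and univalent on $D(w,2r_0)$. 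Finally, fix once and for all a radius $\rho\in(0,r_0)$ with $\rho<c\lambda\, d_{\mathrm{sep}}/(2\bsK)$, where $c>0,\lambda>1$ are the constants of \eqref{3co5RIbc1p} and $d_{\mathrm{sep}}>0$ is the least gap between the pairwise disjoint (by the VSC) compacta $g_{\alpha}^{-1}(J_{\bt(\alpha)}(S))$ taken over $\alpha\neq\alpha'$ in $I$ with $\bi(\alpha)=\bi(\alpha')$.

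The heart of the matter is a combinatorial/geometric lemma that collapses the fibre structure locally. Iterating the VSC with Proposition \ref{55mENtyQLm} yields, for each $i\in V$ and each $n$, a \emph{disjoint} decomposition $J_i(S)=\bigsqcup_{\xi\in X_i^{n}(S)}g_\xi^{-1}(J_{\bt(\xi)}(S))$; combined with Proposition \ref{HSecY02Qdj}\,(i) and Corollary \ref{wPJN4oayEM} this shows that $\tilde\pi_{2,i}\colon J_i(\tilde{f})\to J_i(S)$ is injective, hence a homeomorphism of compacta, so every $w\in J_i(S)$ has a unique code $\eta(w)\in X_i(S)$. The lemma asserts: given $x\in J_i(S)$ and $n\in\N$, set $\xi:=\eta(x)|n$ and let $W$ be the component of $g_\xi^{-1}(D(g_\xi(x),\rho))$ containing $x$, which is well defined, with $g_\xi|_W$ univalent onto $D(g_\xi(x),\rho)$, because $g_\xi(x)\in J_{\bt(\xi)}(S)$ (Proposition \ref{HSecY02Qdj}\,(i)) and $\rho<r_0$; then $W\cap J_i(S)=W\cap g_\xi^{-1}(J_{\bt(\xi)}(S))$, and consequently $\tilde\pi_{2,i}^{-1}(W\cap J_i(S))=([\xi]\times W)\cap J(\tilde{f})$, which $\tilde{f}^{\,n}$ carries bijectively onto $(X_{\bt(\xi)}(S)\times D(g_\xi(x),\rho))\cap J(\tilde{f})$. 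For the displayed equality, suppose some $y\in W$ also lies in $g_\omega^{-1}(J_{\bt(\omega)}(S))$ for some $\omega\neq\xi$ in $X_i^{n}(S)$, and let $m$ be the first index at which $\omega$ and $\xi$ disagree. Then $g_{\xi|_{m-1}}(x)$ and $g_{\xi|_{m-1}}(y)$ lie, respectively, in the two disjoint VSC‑sets $g_{\xi_m}^{-1}(J_{\bt(\xi_m)}(S))$ and $g_{\omega_m}^{-1}(J_{\bt(\omega_m)}(S))$, so they are at least $d_{\mathrm{sep}}$ apart; but both also lie in the single component of $g_{(\xi_m,\dots,\xi_n)}^{-1}(D(g_\xi(x),\rho))$ containing $g_{\xi|_{m-1}}(x)$, whose diameter is at most $2\bsK\rho\,|(g_{(\xi_m,\dots,\xi_n)})'(g_{\xi|_{m-1}}(x))|^{-1}\le 2\bsK\rho/(c\lambda^{\,n-m+1})$ by Proposition \ref{ekoebe2}, \eqref{3co5RIbc1p} and \eqref{l0mSj4Qp58}; since $\rho<c\lambda\, d_{\mathrm{sep}}/(2\bsK)$ this forces $n-m+1<1$, which is absurd.

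With the lemma both inequalities follow by conformal bootstrapping. Fix $z\in J_i(S)$ and $0<r<\min\{r_0,\bsK\rho/M_0\}$, and put $(\eta,z):=\tilde\pi_{2,i}^{-1}(z)$; by \eqref{3co5RIbc1p} the derivatives $|(g_{\eta|n})'(z)|=|(\tilde{f}^{\,n})'(\eta,z)|$ increase to $\infty$, so there is a first $n$ with $|(g_{\eta|n})'(z)|\ge\bsK\rho/r$, and then, since $|(\tilde f)'|\le M_0$ on $J(\tilde{f})$ and $r<\bsK\rho/M_0$ forces $n\ge2$, also $|(g_{\eta|n})'(z)|\le M_0\bsK\rho/r$. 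With $\xi:=\eta|n$ and $W$ as in the lemma, Proposition \ref{ekoebe2} gives $D(z,(M_0\bsK^2)^{-1}r)\subset W\subset D(z,r)$, while Propositions \ref{ekoebe1} and \ref{tg3o75180I5} give $\nu_{\delta,i}(W\cap J_i(S))=\tilde\nu_\delta\big(([\xi]\times W)\cap J(\tilde{f})\big)$, which is comparable, up to a factor $\bsk(1/2)^{\pm\delta}$, to $|(g_\xi)'(z)|^{-\delta}\,\nu_{\delta,\bt(\xi)}\big(D(g_\xi(z),\rho)\cap J_{\bt(\xi)}(S)\big)$. For the fixed $\rho$ the last factor lies between the constant $c_0:=\min_{j\in V}\inf_{w\in J_j(S)}\nu_{\delta,j}(D(w,\rho)\cap J_j(S))$ and $\|\tilde\nu_\delta\|<\infty$; here $c_0>0$ since topological transitivity makes $\tilde\nu_\delta$ of full support on $J(\tilde{f})$, hence each $\nu_{\delta,j}$ of full support on $J_j(S)$, and $w\mapsto\nu_{\delta,j}(D(w,\rho)\cap J_j(S))$ is lower semicontinuous on the compact $J_j(S)$. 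As $|(g_\xi)'(z)|^{-\delta}$ is comparable to $r^{\delta}$, the lower bound follows: $\nu_{\delta,i}(D(z,r)\cap J_i(S))\ge\nu_{\delta,i}(W\cap J_i(S))\gtrsim r^{\delta}$.

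For the upper bound, run the same construction at every point $x\in D(z,r)\cap J_i(S)$ to obtain open sets $W_x\ni x$ with $D(x,c_2 r)\subset W_x\subset D(x,r)$ for $c_2:=(M_0\bsK^2)^{-1}$ and $\nu_{\delta,i}(W_x\cap J_i(S))\le C_4 r^{\delta}$ (only the $\le$ direction and the bound $\|\tilde\nu_\delta\|$ being needed). A packing argument finishes: taking $x_1,\dots,x_{K_0}\in D(z,r)\cap J_i(S)$ maximal so that the balls $D(x_k,c_2 r/3)$ are pairwise disjoint gives $K_0\le(6/c_2)^2$ (these balls lie in $D(z,2r)\subset\C$), and maximality forces $D(z,r)\cap J_i(S)\subset\bigcup_{k=1}^{K_0}D(x_k,c_2 r)\subset\bigcup_{k=1}^{K_0}W_{x_k}$, so $\nu_{\delta,i}(D(z,r)\cap J_i(S))\le\sum_{k=1}^{K_0}\nu_{\delta,i}(W_{x_k}\cap J_i(S))\le K_0 C_4\, r^{\delta}$. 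Taking $C_{\mathrm{Ahl}}$ to be the largest of the resulting constants and shrinking $r_0$ to absorb the various thresholds gives (ii), and then (i). The main obstacle—and the only place the VSC is really used—is the lemma of the second paragraph: without it, $\tilde\pi_{2,i}^{-1}(D(z,r)\cap J_i(S))$ may be spread over uncountably many fibres and cannot be matched with a single piece $([\xi]\times W)\cap J(\tilde{f})$ on which Proposition \ref{tg3o75180I5} applies; the branch‑separation estimate, driven by uniform expansion \eqref{3co5RIbc1p} and Koebe distortion, is precisely what collapses the local picture to one fibre once $\rho$ is chosen small. (If $\delta=0$ the claim is immediate, $\nu_\delta$ being then a finite measure of full support on the compact $J(S)$.)
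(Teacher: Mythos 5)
The paper does not prove this proposition: it is imported verbatim from \cite{arimitsu2024}, so there is no internal argument to compare yours against. On its own merits, your reconstruction is correct and follows the standard route for Ahlfors regularity of conformal measures of expanding maps, with the right GDMS-specific ingredient in the right place. The reduction of (i) to (ii) via $J(\tilde f)=\bigsqcup_{i}J_i(\tilde f)$ (valid because each $X_i(S)$ is clopen, so $J_i(\tilde f)=(X_i(S)\times\C)\cap J(\tilde f)$ under Proposition \ref{wPJN4oayEL}) is sound, and the branch-collapsing lemma is exactly what is needed: the iterated VSC makes the level-$n$ sets $g_\xi^{-1}(J_{\bt(\xi)}(S))$, $\xi\in X_i^n(S)$, pairwise disjoint, and your estimate that a Koebe-controlled pull-back of $D(g_\xi(x),\rho)$ has diameter $\leq 2\bsK\rho/(c\lambda^{\,n-m+1})<d_{\mathrm{sep}}$ correctly forces $W\cap J_i(S)$ into a single cylinder, so that Proposition \ref{tg3o75180I5} applies to $([\xi]\times W)\cap J(\tilde f)$. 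The first-hitting-time choice of $n$, the two-sided Koebe comparison $D(z,(M_0\bsK^2)^{-1}r)\subset W\subset D(z,r)$, the uniform positive lower bound $c_0$ at the fixed scale $\rho$ via full support and lower semicontinuity, and the bounded-multiplicity covering for the upper estimate are all standard and correctly executed. Two minor remarks: the closing parenthetical about $\delta=0$ is wrong as stated (Ahlfors $0$-regularity at all small scales would force every point of $J(S)$ to be an atom of mass bounded below, i.e.\ $J(S)$ finite; the proposition tacitly presupposes $\delta>0$, which your main argument in fact uses only harmlessly), and in the reduction step for the upper bound of (i) you should record that $r_0$ must be halved so that the enlarged radius $2r$ stays below the threshold of (ii) — you do gesture at this by ``shrinking $r_0$ to absorb the various thresholds,'' so this is cosmetic.
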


From now on, we additionally assume that $S$ satisfies the VSC and that
$\tilde{f}$ is topologically transitive on $J(\tilde{f})$.

\begin{lem}\label{kkvM33kTyT}
For $\xi \in X^{*}(S)$ and  $U\in \Comp(\xi, R)$, let $\gamma_{\xi, U}:D(y_{\bt(\xi)},2R)\to \C$ be as in Definition \ref{FAYWI6yRa4}.
Then, there exists a constant $D\geq 1$ such that
for any $\xi \in X^{*}(S)$ and  $U\in \Comp(\xi, R)$ we have
\[
D^{-1}|(\gamma_{\xi, U})^{\prime}(y_{\bt(\xi)})| d_{\C}(u, v)
 \leq 
d_{\C}(\gamma_{\xi, U}(u), \gamma_{\xi, U}(v))
 \leq 
D |(\gamma_{\xi, U})^{\prime}(y_{\bt(\xi)})| d_{\C}(u, v)
\]
for any $u,v\in D(y_{\bt(\xi)}, R)$.
\end{lem}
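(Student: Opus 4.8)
The plan is to prove the two inequalities separately, reading off the constants from the Koebe estimates in Propositions \ref{ekoebe1} and \ref{ekoebe2}, and then checking that the resulting $D$ depends only on the universal quantity $\bsK=\bsk(1/2)$, hence is uniform in $\xi$ and $U$. Throughout I would fix $\xi\in X^{*}(S)$ and $U\in\Comp(\xi,R)$ and abbreviate $y:=y_{\bt(\xi)}$ and $\gamma:=\gamma_{\xi,U}$. By Definition \ref{FAYWI6yRa4} the map $\gamma$ is a univalent analytic map on the \emph{whole} disc $D(y,2R)$; this is the only place the standing hypothesis enters, via the condition $\overline{D(y_{\bt(\xi)},2R)}^{ \C}\cap P_{\bt(\xi)}(S)=\varnothing$ built into the definition of a family of vertex-wise holes, which guarantees that the relevant inverse branch is defined and injective on all of $D(y,2R)$.

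For the upper estimate I would use that $[u,v]\subset D(y,R)$ by convexity, so that
\[
d_{\C}(\gamma(u),\gamma(v))=\Bigl|\int_{[u,v]}\gamma'\Bigr|\le\Bigl(\sup_{w\in D(y,R)}|\gamma'(w)|\Bigr)d_{\C}(u,v),
\]
and then bound the supremum by $\bsK\,|\gamma'(y)|$ using Proposition \ref{ekoebe1} applied to $\gamma$ on $D(y,2R)$ with $t=1/2$. This gives the right-hand inequality with factor $\bsK$.

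The lower estimate is the step I expect to be the main obstacle: the image $\gamma(D(y,R))$ need not be convex, so one cannot simply integrate $(\gamma^{-1})'=g_{\xi}'$ along the segment joining $\gamma(u)$ and $\gamma(v)$, and the ratio $R/(2R)=1/2$ is too coarse for a single application of the growth theorem to two arbitrary points. Instead I would fix $u,v\in D(y,R)$ and apply Proposition \ref{ekoebe2} to $g_{\xi}$ at the point $z:=\gamma(u)$: its analytic inverse $\gamma$ is defined on $D(y,2R)\supseteq D(u,R)=D(g_{\xi}(z),R)$, and $|g_{\xi}'(z)|^{-1}=|\gamma'(u)|$ by differentiating $g_{\xi}\circ\gamma=\id$, so Proposition \ref{ekoebe2} yields, for every $0\le r\le R/2$,
\[
D\bigl(\gamma(u),\,\bsK^{-1}r\,|\gamma'(u)|\bigr)\ \subseteq\ \gamma\bigl(D(u,r)\bigr).
\]
The key trick is then to take $r:=\min\{d_{\C}(u,v),\,R/2\}$: since $v\notin D(u,r)$, injectivity of $\gamma$ gives $\gamma(v)\notin\gamma(D(u,r))$, so $\gamma(v)$ lies outside the open disc above and therefore $d_{\C}(\gamma(u),\gamma(v))\ge\bsK^{-1}r\,|\gamma'(u)|$. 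Because $u,v\in D(y,R)$ forces $d_{\C}(u,v)<2R$, one has $r\ge\tfrac14 d_{\C}(u,v)$, while Proposition \ref{ekoebe1} (again with $t=1/2$) gives $|\gamma'(u)|\ge\bsK^{-1}|\gamma'(y)|$; combining, $d_{\C}(\gamma(u),\gamma(v))\ge\tfrac14\bsK^{-2}\,d_{\C}(u,v)\,|\gamma'(y)|$.

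Finally I would set $D:=4\bsK^{2}$, which (using $\bsK\ge1$) dominates the constant $\bsK$ coming from the upper bound and is $\ge 4\bsK^{2}$ for the lower bound, so both inequalities of the lemma hold with this single constant, visibly independent of $\xi$ and $U$.
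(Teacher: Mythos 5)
Your proof is correct, but it follows a genuinely different route from the paper's. The paper rescales $\gamma_{\xi,U}$ to a member $F$ of the Schlicht class $\mathfrak{S}$ via $F(z)=\bigl(\gamma_{\xi,U}(y+2Rz)-\gamma_{\xi,U}(y)\bigr)/\bigl(2R\,(\gamma_{\xi,U})^{\prime}(y)\bigr)$, considers the difference quotient $Q(F,u,v)$ on $\mathfrak{S}\times\overline{D(0,1/2)}\times\overline{D(0,1/2)}$, and invokes compactness of $\mathfrak{S}$ together with continuity and positivity of $Q$ to obtain uniform bounds $0<m\le Q\le M<\infty$; this is a soft argument that produces a non-explicit constant $D=\max\{M,m^{-1}\}$. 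You instead argue quantitatively: the upper bound by integrating $\gamma^{\prime}$ along the segment $[u,v]$ and controlling $\sup_{D(y,R)}|\gamma^{\prime}|$ via Proposition \ref{ekoebe1}, and the lower bound by applying Proposition \ref{ekoebe2} to $g_{\xi}$ at $z=\gamma(u)$ with the radius $r=\min\{d_{\C}(u,v),R/2\}$ and using injectivity of $\gamma$ to exclude $\gamma(v)$ from the inscribed disc. All the steps check out: the inverse branch is indeed defined on $D(u,R)\subset D(y,2R)$, the bound $r\ge\tfrac14 d_{\C}(u,v)$ follows from $d_{\C}(u,v)<2R$, and $|\gamma^{\prime}(u)|\ge\bsK^{-1}|\gamma^{\prime}(y)|$ follows from Proposition \ref{ekoebe1} with $t=1/2$. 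Your approach buys an explicit constant $D=4\bsK^{2}$ depending only on the universal Koebe function $\bsk$, and avoids appealing to compactness of the Schlicht class (and the small point, implicit in the paper, that $Q$ extends continuously to the diagonal); the paper's approach is shorter on the page but less constructive.
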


\begin{proof}
Fix $\xi \in X^{*}(S)$ and  $U\in \Comp(\xi, R)$.
Let
\[
F(z):=\frac{\gamma_{\xi, U}(y_{\bt(\xi)}+2Rz)-\gamma_{\xi, U}(y_{\bt(\xi)})}{2R (\gamma_{\xi, U})^{\prime}(y_{\bt(\xi)})}\qquad(|z|<1). 
\]
Then, $F$ belongs to the Schlicht class $\mathfrak{S}$.
Let $K:=\overline{D(0,1/2)}$ and define
\[
Q(F,u,v):=
\begin{cases}
\dfrac{d_{\C}(F(u), F(v))}{d_{\C}(u, v)}, & u\neq v,\\[4pt]
|F^{\prime}(u)|, & u=v.
\end{cases}
\]
Since the Schlicht class $\mathfrak{S}$ is compact in the topology of locally uniform convergence and $Q$ is continuous and positive
on $\mathfrak{S} \times K\times K$, there exist $0<m\leq M<\infty$ such that
$m\leq Q(F,u,v)\leq M$ for all $F\in \mathfrak{S}$ and $u,v\in K$.
Setting $D:=\max\{M,m^{-1}\}$ completes the proof.
\end{proof}

\begin{lem}\label{Ra4DgHuaXm}
There exist constants $C_{1},C_{2}>0$ such that
for any $\xi \in X^{*}(S)$ with $\bt(\xi)=j$ and any $U \in \Comp(\xi, R)$ we have 
\[
C_{1} \mathrm{w}_{\delta}(\xi, U) \leq 
\nu_{\delta}\left(\gamma_{\xi, U}(\sfH_{j}(R))\right)\ 
\le\ C_{2} \mathrm{w}_{\delta}(\xi, U).
\]
\end{lem}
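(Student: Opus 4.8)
Write $j:=\bt(\xi)$, $n:=|\xi|$, $z_{0}:=\gamma_{\xi,U}(y_{j})$ and $\rho:=|(\gamma_{\xi,U})'(y_{j})|$, so that $\mathrm{w}_{\delta}(\xi,U)=\rho^{\delta}$; recall from Definition~\ref{FAYWI6yRa4} that $\gamma_{\xi,U}(\sfH_{j}(R))=B_{\xi,U}(R)\subset J_{\bi(\xi)}(S)\subset J(S)$, and in particular $z_{0}\in J(S)$. The plan is to prove the two estimates separately: the upper bound from the Koebe inclusion (Proposition~\ref{ekoebe2}) together with the Ahlfors upper regularity of $\nu_{\delta}$ (Proposition~\ref{EwoyAkFqwq}(i)), and the lower bound by pulling back the \emph{smaller} hole $D(y_{j},R/2)\cap J_{j}(S)$ to the skew product and invoking the conformality identity of Proposition~\ref{tg3o75180I5} combined with the Ahlfors lower regularity of $\nu_{\delta,j}$ (Proposition~\ref{EwoyAkFqwq}(ii)).

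For the upper bound I would apply Proposition~\ref{ekoebe2} to $g_{\xi}$ at the point $z_{0}$ with radius $R$ (the standing condition $\overline{D(y_{j},2R)}^{\C}\cap P_{j}(S)=\varnothing$ is exactly what makes the branch $\gamma_{\xi,U}$ available on $D(y_{j},2R)$), obtaining
\[
\gamma_{\xi,U}(\sfH_{j}(R))\ \subset\ \gamma_{\xi,U}\bigl(D(y_{j},R)\bigr)\ \subset\ D\bigl(z_{0},\bsK R\rho\bigr).
\]
If $\bsK R\rho<r_{0}$, then $\nu_{\delta}(\gamma_{\xi,U}(\sfH_{j}(R)))\le\nu_{\delta}\bigl(D(z_{0},\bsK R\rho)\cap J(S)\bigr)\le C_{\mathrm{Ahl}}(\bsK R\rho)^{\delta}$ by Proposition~\ref{EwoyAkFqwq}(i). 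If $\bsK R\rho\ge r_{0}$, then $\rho^{\delta}\ge(r_{0}/(\bsK R))^{\delta}$, and the crude bound $\nu_{\delta}(\gamma_{\xi,U}(\sfH_{j}(R)))\le\nu_{\delta}(J(S))<\infty$ already suffices. Thus the upper bound holds with $C_{2}:=\max\{C_{\mathrm{Ahl}}(\bsK R)^{\delta},\ \nu_{\delta}(J(S))(\bsK R/r_{0})^{\delta}\}$.

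For the lower bound I would work on the skew product, because $\gamma_{\xi,U}(\sfH_{j}(R))$ is only the part of $J(S)\cap\gamma_{\xi,U}(D(y_{j},R))$ that $g_{\xi}$ carries into $J_{j}(S)$ along the branch $\gamma_{\xi,U}$, so that no ``disc inside the image'' argument is directly available. Set $\sfH_{j}(R/2):=D(y_{j},R/2)\cap J_{j}(S)$ and $\tilde{B}_{\xi,U}(R/2):=([\xi]\times\gamma_{\xi,U}(\sfH_{j}(R/2)))\cap J(\tilde{f})$, using the same inverse branch $\gamma_{\xi,U}$ (well defined on $D(y_{j},2R)\supset D(y_{j},R/2)$). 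Since $\gamma_{\xi,U}$ is univalent and the shift is injective on $[\xi]$, the map $\tilde{f}^{n}|_{\tilde{B}_{\xi,U}(R/2)}$ is injective; using $\tilde{f}^{n}(\eta,w)=(\sigma^{n}\eta,g_{\xi}(w))$ for $\eta\in[\xi]$, Proposition~\ref{wPJN4oayEL}, and the identity $J_{\xi\tau}=g_{\xi}^{-1}(J_{\tau})$ (iterate Proposition~\ref{HSecY02Qdj}\ref{sc15xYijnN}), one checks
\[
\tilde{f}^{n}\bigl(\tilde{B}_{\xi,U}(R/2)\bigr)=\bigl(X_{j}(S)\times\sfH_{j}(R/2)\bigr)\cap J(\tilde{f})=\tilde{\pi}_{2,j}^{-1}\bigl(\sfH_{j}(R/2)\bigr).
\]
Proposition~\ref{tg3o75180I5} then yields
\[
\nu_{\delta,j}\bigl(\sfH_{j}(R/2)\bigr)=\int_{\tilde{B}_{\xi,U}(R/2)}\bigl|(\tilde{f}^{n})'(\tilde{z})\bigr|^{\delta}\,\mathrm{d}\tilde{\nu}_{\delta}(\tilde{z}).
\]
On $\tilde{B}_{\xi,U}(R/2)$ we have $|(\tilde{f}^{n})'(\tilde{z})|=|(g_{\xi})'(w)|=|(\gamma_{\xi,U})'(g_{\xi}(w))|^{-1}$ with $g_{\xi}(w)\in D(y_{j},R/2)$, so Proposition~\ref{ekoebe1} applied to $\gamma_{\xi,U}$ on $D(y_{j},2R)$ with $t=1/4$ gives $|(\tilde{f}^{n})'(\tilde{z})|^{\delta}\le\bsk(1/4)^{\delta}\rho^{-\delta}$; combined with $\nu_{\delta,j}(\sfH_{j}(R/2))\ge C_{\mathrm{Ahl}}^{-1}r_{*}^{\delta}=:c_{*}>0$ from Proposition~\ref{EwoyAkFqwq}(ii), where $r_{*}:=\min\{R/2,r_{0}/2\}$, this gives $\tilde{\nu}_{\delta}(\tilde{B}_{\xi,U}(R/2))\ge c_{*}\bsk(1/4)^{-\delta}\rho^{\delta}$. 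Finally, $\sfH_{j}(R/2)\subset\sfH_{j}(R)$ gives $\tilde{B}_{\xi,U}(R/2)\subset\tilde{\pi}_{2}^{-1}(\gamma_{\xi,U}(\sfH_{j}(R)))$, whence $\nu_{\delta}(\gamma_{\xi,U}(\sfH_{j}(R)))\ge\tilde{\nu}_{\delta}(\tilde{B}_{\xi,U}(R/2))\ge C_{1}\mathrm{w}_{\delta}(\xi,U)$ with $C_{1}:=c_{*}\bsk(1/4)^{-\delta}$.

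The step I expect to be the main obstacle is the set identification $\tilde{f}^{n}(\tilde{B}_{\xi,U}(R/2))=(X_{j}(S)\times\sfH_{j}(R/2))\cap J(\tilde{f})$: the inclusion ``$\subset$'' is immediate from the complete invariance \eqref{l0mSj4Qp58}, but ``$\supset$'' requires, for each $(\tau,u)$ in the right-hand side, producing the preimage $(\xi\tau,\gamma_{\xi,U}(u))$ and checking that it lies in $J(\tilde{f})$, which is precisely where $u\in J_{\tau}$ (Proposition~\ref{wPJN4oayEL}) and $\gamma_{\xi,U}(u)\in g_{\xi}^{-1}(J_{\tau})=J_{\xi\tau}$ are used. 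Everything else is routine: the distortion bookkeeping with the two Koebe constants ($\bsK=\bsk(1/2)$ for the image-disc inclusion, $\bsk(1/4)$ for the derivative ratio on $D(y_{j},R/2)$), and the harmless boundary case $\bsK R\rho\ge r_{0}$ in the upper bound, absorbed by the finiteness of $\nu_{\delta}$.
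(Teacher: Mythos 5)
Your proof is correct, but it takes a genuinely different route from the paper's. The paper stays entirely on the base $J(S)$: it first upgrades the Ahlfors regularity of Proposition~\ref{EwoyAkFqwq} (via \cite[Theorem 6.9]{MR1333890}) to a two-sided comparison $\nu_{\delta}\asymp\mathcal{H}^{\delta}$ (and $\nu_{\delta,i}\asymp\mathcal{H}^{\delta}$ on $J_{i}(S)$), then invokes Lemma~\ref{kkvM33kTyT} — a bi-Lipschitz estimate for $\gamma_{\xi,U}$ with constants $D^{\mp1}|(\gamma_{\xi,U})'(y_{j})|$, proved by compactness of the Schlicht class — so that both inequalities drop out simultaneously from the scaling of Hausdorff measure under bi-Lipschitz maps, together with the uniform bounds $C_{\rm H}^{-1}C_{\mathrm{Ahl}}^{-1}r_{*}^{\delta}\le\mathcal{H}^{\delta}(\sfH_{j}(R))\le C_{\rm H}$. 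You instead prove the two bounds asymmetrically: the upper bound from the Koebe inclusion $\gamma_{\xi,U}(D(y_{j},R))\subset D(z_{0},\bsK R\rho)$ plus the upper Ahlfors bound (with the harmless dichotomy on $\bsK R\rho$ versus $r_{0}$), and the lower bound by lifting to the skew product, identifying $\tilde{f}^{n}(\tilde{B}_{\xi,U}(R/2))$ with $\tilde{\pi}_{2,j}^{-1}(\sfH_{j}(R/2))$ and applying the conformality identity of Proposition~\ref{tg3o75180I5} together with Koebe distortion and the lower Ahlfors bound for $\nu_{\delta,j}$. Your set identification is sound — the ``$\supset$'' direction does go through exactly as you indicate, via $u\in J_{\tau}$ and $\gamma_{\xi,U}(u)\in g_{\xi}^{-1}(J_{\tau})=J_{\xi\tau}$ — and your injectivity and measurability checks are fine. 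What each approach buys: yours dispenses with Lemma~\ref{kkvM33kTyT} and the detour through Hausdorff measure entirely, working directly with the conformal measure (arguably the more canonical argument here, and it makes transparent why the push-forward $\nu_{\delta}$ of the fibred measure sees the \emph{whole} of $\gamma_{\xi,U}(\sfH_{j}(R))$ with the expected weight); the paper's version is shorter once the comparison with $\mathcal{H}^{\delta}$ and the Schlicht-class lemma are in place, and treats the two bounds in one symmetric stroke without any excursion to $J(\tilde{f})$.
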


\begin{proof}
Fix $\xi \in X^{*}(S)$ and $U\in \Comp(\xi, R)$.
Noting that $V$ is finite, by Proposition \ref{EwoyAkFqwq} and \cite[Theorem 6.9]{MR1333890}, we see that there exists a constant $C_{\rm H}\ge1$ such that
\begin{equation}\label{ksN5PsQKQC}
C_{\rm H}^{-1} \mathcal{H}^{\delta}(U) \leq \nu_{\delta}(U) \leq C_{\rm H} \mathcal{H}^{\delta}(U)
\end{equation}
for all Borel subset $U\subset J(S)$; and 
\begin{equation}\label{a3Kty4MVkf}
C_{\rm H}^{-1} \mathcal{H}^{\delta}(U) \leq \nu_{\delta, i}(U) \leq C_{\rm H} \mathcal{H}^{\delta}(U)
\end{equation}
for all Borel subset $U\subset J_{i}(S)$ and $i \in V$.

Let $r_0>0$ and $C_{\mathrm{Ahl}}\ge1$ be as in Proposition \ref{EwoyAkFqwq} and put
$r_{*}:=\min\{R,r_0/2\}>0$.
Then for every $i \in V$ we have
\[
\nu_{\delta, i} \left(\sfH_{i}(R)\right)\ \ge\
\nu_{\delta, i} \left(D(y_{i},r_{*})\cap J_{i}(S)\right)
\ \ge\ C_{\mathrm{Ahl}}^{-1}r_{*}^{\delta}.
\]
Combining this with \eqref{a3Kty4MVkf} and $\nu_{\delta,i}(\sfH_i(R))\le \nu_{\delta,i}(J_i(S))<\infty$ gives
\[
C_{\rm H}^{-1}C_{\mathrm{Ahl}}^{-1}r_{*}^{\delta} \leq \mathcal{H}^{\delta}(\sfH_{i}(R)) \leq C_{\rm H}
\]
for each $i \in V$. 
By Lemma \ref{kkvM33kTyT}, there exists $D\ge1$ such that
for all $u,v\in D (y_{\bt(\xi)}, R)$,
\[
D^{-1}|(\gamma_{\xi, U})^{\prime}(y_{\bt(\xi)})| d_{\C}(u, v)
 \leq 
d_{\C}(\gamma_{\xi, U}(u), \gamma_{\xi, U}(v))
 \leq 
D |(\gamma_{\xi, U})^{\prime}(y_{\bt(\xi)})| d_{\C}(u, v).
\]
Hence,  as $\gamma_{\xi, U}|_{D(y_{\bt(\xi)}, R)}$ and  $(\gamma_{\xi, U})^{-1}|_{\gamma_{\xi, U}(D(y_{\bt(\xi)}, R))}$ are both Lipschitz, we have 
\[
D^{-\delta}|(\gamma_{\xi, U})^{\prime}(y_{\bt(\xi)})|^{\delta} \mathcal{H}^{\delta}(\sfH_{\bt(\xi)}(R))
 \leq \mathcal{H}^{\delta}(\gamma_{\xi, U}(\sfH_{\bt(\xi)}(R))) \leq 
D^{\delta}|(\gamma_{\xi, U})^{\prime}(y_{\bt(\xi)})|^{\delta} \mathcal{H}^{\delta}(\sfH_{\bt(\xi)}(R)).
\]
Using \eqref{ksN5PsQKQC},  we obtain constants $C_{1},C_{2}>0$  such that
\[
C_{1} |(\gamma_{\xi, U})^{\prime}(y_{\bt(\xi)})|^{\delta}
 \leq \nu_{\delta}(\gamma_{\xi, U}(\sfH_{\bt(\xi)}(R))) \leq C_{2} |(\gamma_{\xi, U})^{\prime}(y_{\bt(\xi)})|^{\delta}.
\]
Finally, $\mathrm{w}_{\delta}(\xi, U)=|(\gamma_{\xi, U})^{\prime}(y_{\bt(\xi)})|^{\delta}$ by definition.
This completes the proof.
\end{proof}

\begin{prp}\label{grxaIp1jSL}
Let $\lambda>1$ be as in \eqref{3co5RIbc1p}.
Then
\[
\overline{\Press^{\mathrm{geom}}}(\delta, R) \leq \Press(\sigma, \psi)\ -\ \delta\log\lambda
\ <\ \Press(\sigma, \psi).
\]
\end{prp}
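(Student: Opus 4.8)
The plan is to bound $Z_n^{\mathrm{geom}}(\delta, R)$ from above by a constant multiple of $Z_n^{\deg}(1) = N_n$ times $\lambda^{-\delta n}$, and then take logarithms and $\limsup$, recalling from Definition \ref{r8BiPogUOU} that $\frac1n\log N_n \to \Press(\sigma,\psi)$. The starting point is the observation that, by the expanding-along-fibres condition \eqref{3co5RIbc1p}, for each $\xi \in X^n(S)$ and each $U \in \Comp(\xi, R)$ we have a good lower bound on $|(g_\xi)'|$ at the relevant preimage point: indeed, if $\tilde{z}_{\xi, U}$ is as in Notation \ref{u58n7Yc3qy}, then $|(g_\xi)'(\gamma_{\xi, U}(y_{\bt(\xi)}))| = |(\tilde{f}^n)'(\tilde{z}_{\xi, U})| \geq c\lambda^n$, since $\tilde{z}_{\xi, U} \in J(\tilde{f})$. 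Hence $\mathrm{w}_\delta(\xi, U) = |(g_\xi)'(\gamma_{\xi, U}(y_{\bt(\xi)}))|^{-\delta} \leq c^{-\delta}\lambda^{-\delta n}$.

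That pointwise bound alone is not enough, because the number of pairs $(\xi, U)$ grows; the key is to show that for fixed $\xi \in X^n(S)$ the sum $\sum_{U \in \Comp(\xi, R)} \mathrm{w}_\delta(\xi, U)$ is bounded above by a constant times $\deg(g_\xi)$ times $\lambda^{-\delta n}$. For this I would use the conformal/Ahlfors structure: by Lemma \ref{Ra4DgHuaXm} there is $C_1 > 0$ with $C_1 \mathrm{w}_\delta(\xi, U) \leq \nu_\delta(\gamma_{\xi, U}(\sfH_{\bt(\xi)}(R)))$ for every $U \in \Comp(\xi, R)$. The sets $\gamma_{\xi, U}(\sfH_{\bt(\xi)}(R))$ for distinct $U$ are pairwise disjoint (distinct components of $g_\xi^{-1}(D(y_{\bt(\xi)}, R))$), so summing over $U$ gives
\[
C_1 \sum_{U \in \Comp(\xi, R)} \mathrm{w}_\delta(\xi, U) \leq \nu_\delta\!\left(g_\xi^{-1}(\sfH_{\bt(\xi)}(R))\right) \leq \nu_\delta(J(S)) < \infty.
\]
Thus $\sum_{U} \mathrm{w}_\delta(\xi, U) \leq C_1^{-1}\nu_\delta(J(S))$ uniformly in $\xi$ and $n$; at the same time $\#\Comp(\xi, R) \leq \deg(g_\xi)$. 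Combining either of these with the pointwise bound $\mathrm{w}_\delta(\xi, U) \leq c^{-\delta}\lambda^{-\delta n}$: from $\#\Comp(\xi, R) \leq \deg(g_\xi)$ we get $\sum_{U}\mathrm{w}_\delta(\xi, U) \leq \deg(g_\xi)\, c^{-\delta}\lambda^{-\delta n}$. Summing over $\xi \in X^n(S)$ yields $Z_n^{\mathrm{geom}}(\delta, R) \leq c^{-\delta}\lambda^{-\delta n} N_n$.

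Taking $\frac1n\log$ and then $\limsup_{n\to\infty}$ gives $\overline{\Press^{\mathrm{geom}}}(\delta, R) \leq -\delta\log\lambda + \limsup_n \frac1n\log N_n = \Press(\sigma,\psi) - \delta\log\lambda$, and since $\lambda > 1$ and $\delta > 0$ the final strict inequality $\Press(\sigma,\psi) - \delta\log\lambda < \Press(\sigma,\psi)$ is immediate. I expect the main subtlety to be the disjointness step and the uniformity: one must be careful that the inverse branches $\gamma_{\xi, U}$ are genuinely defined on all of $D(y_{\bt(\xi)}, 2R)$ (guaranteed by $\overline{D(y_i, 2R)}^{\C} \cap P_i(S) = \varnothing$ and hyperbolicity) and that the images are pairwise disjoint subsets of $J(S)$ — the containment in $J(S)$ coming from Proposition \ref{55mENtyQLm}. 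Actually, the cleanest route avoids Lemma \ref{Ra4DgHuaXm} entirely and uses only $\#\Comp(\xi, R) \leq \deg(g_\xi)$ together with the pointwise derivative bound, so the disjointness/measure argument can be relegated to a remark; the genuinely essential inputs are \eqref{3co5RIbc1p} and the definition of $N_n$.
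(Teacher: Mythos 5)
Your proposal is correct and its essential argument --- the pointwise bound $\mathrm{w}_{\delta}(\xi,U)\leq c^{-\delta}\lambda^{-\delta n}$ obtained by locating $\gamma_{\xi,U}(y_{\bt(\xi)})$ on a fibre Julia set and invoking \eqref{3co5RIbc1p}, combined with $\#\Comp(\xi,R)\leq\deg(g_{\xi})$ and the definition of $N_n$ --- is exactly the paper's proof. The detour through Lemma \ref{Ra4DgHuaXm} and disjointness is unnecessary, as you yourself conclude in the final sentence.
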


\begin{proof}
Corollary~\ref{wPJN4oayEM} and \eqref{ss4cNJda4m} imply that for each $j \in V$ there exists $\eta \in X_{j}(S)$ such that $y_j \in J_{\eta}$ .
Fix $j \in V$, $\xi \in X^{n}(S)$ with $\bt(\xi)=j$ and $U \in \Comp(\xi, R)$.
Then, Proposition \ref{HSecY02Qdj} (ii) implies that  
$\gamma_{\xi, U}(y_j) \in g_{\xi}^{-1}(J_{\eta})=J_{\xi\eta}$ and thus $(\xi\eta,\gamma_{\xi, U}(y_j)) \in J(\tilde{f})$.
Since the map $\tilde{f}$ is expanding along fibres, we have 
\[
|(g_{\xi})^{\prime}(\gamma_{\xi, U}(y_j))|=|(\tilde{f}^{n})^{\prime}(\xi\eta,\gamma_{\xi, U}(y_j))|\ \ge\ c \lambda^n.
\]
Therefore,
\[
\mathrm{w}_{\delta}(\xi, U)=|(g_{\xi})^{\prime}(\gamma_{\xi, U}(y_j))|^{-\delta}\leq c^{-\delta}\lambda^{-\delta n}.
\]
Since $\#\Comp(\xi, R) = \deg(g_{\xi})$, we obtain
\[
Z^{\mathrm{geom}}_{n} (\delta, R)
=
\sum_{\xi \in X^{n}(S)} \sum_{U\in \Comp(\xi, R)}
\mathrm{w}_{\delta}(\xi, U) 
\le
\sum_{\xi \in X^{n}(S)}\deg(g_{\xi}) c^{-\delta}\lambda^{-\delta n}
=
c^{-\delta}\lambda^{-\delta n}N_n,
\]
which implies 
$\overline{\Press^{\mathrm{geom}}}(\delta, R)\leq \Press(\sigma, \psi)-\delta\log\lambda<\Press(\sigma, \psi)$ and completes the proof.
\end{proof}

\begin{thm}\label{RspvxUCucI}
Let $S=(V, E, (\Gamma_{e})_{e \in E})$ be a finitely generated, irreducible rational GDMS.
Assume that $S$ satisfies the VSC and that the map $\tilde{f}$ is expanding along fibres
and topologically transitive on $J(\tilde{f})$.
Take an arbitrary family of vertex-wise holes $(\sfH_{i}(R))_{i \in V}$ for $S$.
Then, we have 
\[
e_\delta(R)=\Press(\sigma, \psi)-\overline{\Press^{\mathrm{geom}}}(\delta, R) > 0.
\]
\end{thm}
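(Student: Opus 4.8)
The strategy is to pin down $\nu_\delta(\sfHP_n(R))$ between two constant multiples of $Z_n^{\mathrm{geom}}(\delta,R)$ (with constants independent of $n$) and then unwind the definition of $e_\delta(R)$, using $\Press(\sigma,\psi)=\lim_{n}n^{-1}\log N_n$ from Definition \ref{r8BiPogUOU}.

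\emph{Step 1: decomposition of the hole-preimage and the upper bound.} Because $S$ is hyperbolic and $\overline{D(y_{\bt(\xi)},2R)}^{\C}\cap P_{\bt(\xi)}(S)=\varnothing$, while $g_\xi\in H^{\bt(\xi)}(S)$ gives $\CV(g_\xi)\subset P_{\bt(\xi)}(S)$, the disc $D(y_{\bt(\xi)},2R)$ contains no critical value of $g_\xi$. Hence $g_\xi$ restricts to a biholomorphism from each $U\in\Comp(\xi,R)$ onto $D(y_{\bt(\xi)},R)$ with inverse $\gamma_{\xi,U}$, so that $g_\xi^{-1}(\sfH_{\bt(\xi)}(R))\cap U=\gamma_{\xi,U}(\sfH_{\bt(\xi)}(R))=B_{\xi,U}(R)$ and therefore $\sfHP_n(R)=\bigcup_{\xi\in X^n(S)}\bigcup_{U\in\Comp(\xi,R)}B_{\xi,U}(R)$. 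Subadditivity of $\nu_\delta$ and Lemma \ref{Ra4DgHuaXm} then give $\nu_\delta(\sfHP_n(R))\le\sum_{\xi\in X^n(S)}\sum_{U\in\Comp(\xi,R)}\nu_\delta(B_{\xi,U}(R))\le C_2\,Z_n^{\mathrm{geom}}(\delta,R)$.

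\emph{Step 2: disjointness and the lower bound.} Fix $i\in V$. For a fixed $\xi\in X_i^n(S)$ the sets $B_{\xi,U}(R)$, $U\in\Comp(\xi,R)$, lie in distinct components of $g_\xi^{-1}(D(y_{\bt(\xi)},R))$, hence are disjoint. For $\xi\neq\xi'$ in $X_i^n(S)$, let $m$ be the least index with $\xi_m\neq\xi'_m$ and set $\zeta:=\xi|_{m-1}=\xi'|_{m-1}$ (empty if $m=1$). Factoring $g_\xi=(g_{\xi,n}\circ\cdots\circ g_{\xi,m})\circ g_\zeta$ and applying Proposition \ref{55mENtyQLm} repeatedly to the tail $g_{\xi,n}\circ\cdots\circ g_{\xi,m+1}$ shows $B_{\xi,U}(R)\subset g_\xi^{-1}(J_{\bt(\xi)}(S))\subset g_\zeta^{-1}\bigl(g_{\xi_m}^{-1}(J_{\bt(\xi_m)}(S))\bigr)$, and symmetrically $B_{\xi',U'}(R)\subset g_\zeta^{-1}\bigl(g_{\xi'_m}^{-1}(J_{\bt(\xi'_m)}(S))\bigr)$ with the same prefix map $g_\zeta$. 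Now $\xi_m\neq\xi'_m$ have the common initial vertex $\bt(\zeta)$ (read as $i$ when $m=1$), so the VSC yields $g_{\xi_m}^{-1}(J_{\bt(\xi_m)}(S))\cap g_{\xi'_m}^{-1}(J_{\bt(\xi'_m)}(S))=\varnothing$; since full preimages commute with intersections, $g_\zeta^{-1}(\cdot)\cap g_\zeta^{-1}(\cdot)=g_\zeta^{-1}(\cdot\cap\cdot)=\varnothing$, hence $B_{\xi,U}(R)\cap B_{\xi',U'}(R)=\varnothing$. Thus $\{B_{\xi,U}(R):\xi\in X_i^n(S),\,U\in\Comp(\xi,R)\}$ is a disjoint subfamily of subsets of $\sfHP_n(R)$, and Lemma \ref{Ra4DgHuaXm} gives $\nu_\delta(\sfHP_n(R))\ge C_1\sum_{\xi\in X_i^n(S)}\sum_{U\in\Comp(\xi,R)}\mathrm{w}_\delta(\xi,U)$. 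Since $X^n(S)=\bigsqcup_{i\in V}X_i^n(S)$, choosing $i$ to maximise this sum gives $\nu_\delta(\sfHP_n(R))\ge(C_1/\#V)\,Z_n^{\mathrm{geom}}(\delta,R)$.

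\emph{Step 3: conclusion.} Combining, $(C_1/\#V)Z_n^{\mathrm{geom}}(\delta,R)\le\nu_\delta(\sfHP_n(R))\le C_2 Z_n^{\mathrm{geom}}(\delta,R)$ for every $n$, so $\limsup_{n}n^{-1}\log\nu_\delta(\sfHP_n(R))=\overline{\Press^{\mathrm{geom}}}(\delta,R)$; combined with $n^{-1}\log N_n\to\Press(\sigma,\psi)$ this gives $e_\delta(R)=-\limsup_{n}n^{-1}\log\bigl(\nu_\delta(\sfHP_n(R))/N_n\bigr)=\Press(\sigma,\psi)-\overline{\Press^{\mathrm{geom}}}(\delta,R)$, and the strict inequality $e_\delta(R)>0$ is exactly Proposition \ref{grxaIp1jSL}. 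I expect the disjointness argument in Step 2 to be the main obstacle: one must track the factorisation of $g_\xi$ through the common prefix $g_\zeta$, combine iterated backward invariance (Proposition \ref{55mENtyQLm}) with the one-step VSC, and use that applying the full (multivalued) preimage $g_\zeta^{-1}$ preserves disjointness because $g_\zeta^{-1}$ commutes with intersections; there the geometry is controlled purely by set-theoretic operations, so no Koebe-type distortion constants enter.
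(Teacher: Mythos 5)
Your proposal is correct and follows essentially the same route as the paper: sandwich $\nu_{\delta}(\sfHP_{n}(R))$ between constant multiples of $Z_{n}^{\mathrm{geom}}(\delta,R)$ via Lemma \ref{Ra4DgHuaXm}, using disjointness of the sets $B_{\xi,U}(R)$ within each $X_{i}^{n}(S)$ (the paper's ``iterated VSC'') and the $1/\#V$ factor for the lower bound, then conclude with Proposition \ref{grxaIp1jSL}. The only difference is that you spell out the iterated-VSC disjointness argument (common prefix $g_{\zeta}$, Proposition \ref{55mENtyQLm} on the tail, one-step VSC, preimages commuting with intersections), which the paper asserts without proof; your version of that step is sound.
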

 
\begin{proof}
For each $i \in V$ set
\[
\mathcal{B}_{n,i}(R):=\bigcup_{\xi \in X_{i}^{n}(S)} \bigcup_{U \in \Comp(\xi, R)} B_{\xi, U}(R).
\]
Then, we have  $\sfHP_{n}(R)=\bigcup_{i \in V}\mathcal{B}_{n,i}(R)$.
By the iterated VSC, we obtain
\[
\nu_{\delta}(\mathcal{B}_{n,i}(R))=\sum_{\xi \in X_{i}^{n}(S)} \sum_{U \in \Comp(\xi, R)}\nu_{\delta}(B_{\xi, U}(R)).
\]
Since $\nu_{\delta}(\sfHP_{n}(R))\ \ge\ \max_{i \in V}\nu_{\delta} \left(\mathcal{B}_{n,i}(R)\right)$ and 
\[
\max_{i \in V}\nu_{\delta} \left(\mathcal{B}_{n,i}(R)\right)
\ \ge\
\frac1{\#V}\sum_{i \in V}\nu_{\delta} \left(\mathcal{B}_{n,i}(R)\right),
\]
we have 
\[
\frac1{\#V}\sum_{i \in V}\nu_{\delta}(\mathcal{B}_{n,i}(R))
 \leq 
\nu_{\delta}(\sfHP_{n}(R)).
\]
Therefore, we have 
\[
\frac1{\#V}\sum_{i \in V}\nu_{\delta}(\mathcal{B}_{n,i}(R))
 \leq 
\nu_{\delta}(\sfHP_{n}(R))
 \leq 
\sum_{i \in V}\nu_{\delta}(\mathcal{B}_{n,i}(R)).
\]
Therefore,
\[
\frac1{\#V}\sum_{\xi \in X^{n}(S)} \sum_{U \in \Comp(\xi, R)}\nu_{\delta}(B_{\xi, U}(R))
 \leq 
\nu_{\delta}(\sfHP_{n}(R))
 \leq 
\sum_{\xi \in X^{n}(S)} \sum_{U \in \Comp(\xi, R)}\nu_{\delta}(B_{\xi, U}(R)).
\]
Applying Lemma \ref{Ra4DgHuaXm} yields constants $c_*,C_*>0$ such that
\[
c_* Z_n^{\mathrm{geom}}(\delta, R) \leq \nu_{\delta}(\sfHP_{n}(R)) \leq C_* Z_n^{\mathrm{geom}}(\delta, R)
\]
for each $n \in \N$.
Therefore,
\[
\limsup_{n \to \infty}\frac{1}{n}\log \nu_{\delta}(\sfHP_{n}(R))
=
\limsup_{n \to \infty}\frac{1}{n}\log Z_n^{\mathrm{geom}}(\delta, R)
=
\overline{\Press^{\mathrm{geom}}}(\delta, R).
\]
and thus
\[
e_\delta(R)
=
-\limsup_{n \to \infty}\frac{1}{n}\log\frac{\nu_{\delta}(\sfHP_{n}(R))}{N_n}
=
-\overline{\Press^{\mathrm{geom}}}(\delta, R)+\Press(\sigma, \psi).
\]
Combining this with Proposition \ref{grxaIp1jSL} completes the proof.
\end{proof}


\begin{thebibliography}{URM22b}

\bibitem[AJSW24]{arimitsu2024}
Tadashi Arimitsu, Johannes Jaerisch, Hiroki Sumi, and Takayuki Watanabe.
\newblock Bowen's formula for a rational graph-directed markov system, 2024.

\bibitem[ASU25]{MR4887995}
Jason Atnip, Hiroki Sumi, and Mariusz Urba\'{n}ski.
\newblock The dynamics and geometry of semi-hyperbolic rational semigroups.
\newblock {\em Mem. Amer. Math. Soc.}, 307(1552):v+192, 2025.

\bibitem[DWY12]{MR2955314}
Mark~F. Demers, Paul Wright, and Lai-Sang Young.
\newblock Entropy, {L}yapunov exponents and escape rates in open systems.
\newblock {\em Ergodic Theory Dynam. Systems}, 32(4):1270--1301, 2012.

\bibitem[DY06]{MR2199394}
Mark~F. Demers and Lai-Sang Young.
\newblock Escape rates and conditionally invariant measures.
\newblock {\em Nonlinearity}, 19(2):377--397, 2006.

\bibitem[FP12]{MR2995652}
Andrew Ferguson and Mark Pollicott.
\newblock Escape rates for {G}ibbs measures.
\newblock {\em Ergodic Theory Dynam. Systems}, 32(3):961--988, 2012.

\bibitem[HM96]{MR1397693}
A.~Hinkkanen and G.~J. Martin.
\newblock The dynamics of semigroups of rational functions. {I}.
\newblock {\em Proc. London Math. Soc. (3)}, 73(2):358--384, 1996.

\bibitem[Jon99]{MR1704543}
Mattias Jonsson.
\newblock Dynamics of polynomial skew products on {$\bold C^2$}.
\newblock {\em Math. Ann.}, 314(3):403--447, 1999.

\bibitem[Jon00]{MR1785403}
Mattias Jonsson.
\newblock Ergodic properties of fibered rational maps.
\newblock {\em Ark. Mat.}, 38(2):281--317, 2000.

\bibitem[KL09]{MR2535206}
Gerhard Keller and Carlangelo Liverani.
\newblock Rare events, escape rates and quasistationarity: some exact formulae.
\newblock {\em J. Stat. Phys.}, 135(3):519--534, 2009.

\bibitem[LW77]{MR476995}
Fran\c~cois Ledrappier and Peter Walters.
\newblock A relativised variational principle for continuous transformations.
\newblock {\em J. London Math. Soc. (2)}, 16(3):568--576, 1977.

\bibitem[Mat95]{MR1333890}
Pertti Mattila.
\newblock {\em Geometry of sets and measures in {E}uclidean spaces}, volume~44
  of {\em Cambridge Studies in Advanced Mathematics}.
\newblock Cambridge University Press, Cambridge, 1995.
\newblock Fractals and rectifiability.

\bibitem[Pes88]{MR969568}
Ya.\~B. Pesin.
\newblock Dimension-like characteristics for invariant sets of dynamical
  systems.
\newblock {\em Uspekhi Mat. Nauk}, 43(4(262)):95--128, 255, 1988.

\bibitem[Pes97]{MR1489237}
Yakov~B. Pesin.
\newblock {\em Dimension theory in dynamical systems}.
\newblock Chicago Lectures in Mathematics. University of Chicago Press,
  Chicago, IL, 1997.
\newblock Contemporary views and applications.

\bibitem[PU10]{MR2656475}
Feliks Przytycki and Mariusz Urba\'{n}ski.
\newblock {\em Conformal fractals: ergodic theory methods}, volume 371 of {\em
  London Mathematical Society Lecture Note Series}.
\newblock Cambridge University Press, Cambridge, 2010.

\bibitem[PY79]{MR534126}
Giulio Pianigiani and James~A. Yorke.
\newblock Expanding maps on sets which are almost invariant. {D}ecay and chaos.
\newblock {\em Trans. Amer. Math. Soc.}, 252:351--366, 1979.

\bibitem[SU13]{MR3003942}
Hiroki Sumi and Mariusz Urba\'nski.
\newblock Transversality family of expanding rational semigroups.
\newblock {\em Adv. Math.}, 234:697--734, 2013.

\bibitem[Sum00]{MR1767945}
Hiroki Sumi.
\newblock Skew product maps related to finitely generated rational semigroups.
\newblock {\em Nonlinearity}, 13(4):995--1019, 2000.

\bibitem[Sum01]{MR1827119}
Hiroki Sumi.
\newblock Dynamics of sub-hyperbolic and semi-hyperbolic rational semigroups
  and skew products.
\newblock {\em Ergodic Theory Dynam. Systems}, 21(2):563--603, 2001.

\bibitem[Sum05]{MR2153926}
Hiroki Sumi.
\newblock Dimensions of {J}ulia sets of expanding rational semigroups.
\newblock {\em Kodai Math. J.}, 28(2):390--422, 2005.

\bibitem[Sum06]{MR2237476}
Hiroki Sumi.
\newblock Semi-hyperbolic fibered rational maps and rational semigroups.
\newblock {\em Ergodic Theory Dynam. Systems}, 26(3):893--922, 2006.

\bibitem[SW19]{MR4002398}
Hiroki Sumi and Takayuki Watanabe.
\newblock Non-i.i.d. random holomorphic dynamical systems and the probability
  of tending to infinity.
\newblock {\em Nonlinearity}, 32(10):3742--3771, 2019.

\bibitem[URM22a]{MR4510391}
Mariusz Urba\'nski, Mario Roy, and Sara Munday.
\newblock {\em Non-invertible dynamical systems. {V}ol. 1. {E}rgodic
  theory---finite and infinite, thermodynamic formalism, symbolic dynamics and
  distance expanding maps}, volume 69.1 of {\em De Gruyter Expositions in
  Mathematics}.
\newblock De Gruyter, Berlin, [2022] \copyright 2022.

\bibitem[URM22b]{MR4486444}
Mariusz Urba\'nski, Mario Roy, and Sara Munday.
\newblock {\em Non-invertible dynamical systems. {V}ol. 2. {F}iner
  thermodynamic formalism---distance expanding maps and countable state
  subshifts of finite type, conformal {GDMS}s, {L}asota-{Y}orke maps and
  fractal geometry}, volume 69.2 of {\em De Gruyter Expositions in
  Mathematics}.
\newblock De Gruyter, Berlin, [2022] \copyright 2022.

\bibitem[Wal82]{MR648108}
Peter Walters.
\newblock {\em An introduction to ergodic theory}, volume~79 of {\em Graduate
  Texts in Mathematics}.
\newblock Springer-Verlag, New York-Berlin, 1982.

\bibitem[ZR92]{MR1232462}
Wei~Min Zhou and Fu~Yao Ren.
\newblock A dynamical system formed by a set of rational functions.
\newblock In {\em Current topics in analytic function theory}, pages 437--449.
  World Sci. Publ., River Edge, NJ, 1992.

\end{thebibliography}
\end{document}